\documentclass[a4paper,11pt]{article}
\usepackage{graphics,color}
\usepackage{amsmath}
\usepackage{bbm}
\usepackage{amssymb}
\usepackage{mathrsfs}
\usepackage{esint}
\usepackage{cancel}
\usepackage{cite}
\usepackage{verbatim}
\usepackage{float}
\usepackage{graphicx}
\usepackage{amsthm}
\usepackage{enumitem}
\usepackage{textcomp}
\usepackage{subfig}
\usepackage{wrapfig}
\usepackage{hyperref}
\newcommand{\black}{\color{black}}

\newcommand{\dx}{\, {\rm d}x}

\newcommand{\dz}{\, {\rm d}z}
\newcommand{\ds}{\, {\rm d}s}
\newcommand{\dt}{\, {\rm d}t}

\newcommand{\di}{\, {\rm d}}
\newcommand{\strokedint}{\fint}
\newcommand{\Rin}{R_{\rm in}}
\newcommand{\Rout}{R_{\rm out}}
\newcommand{\Tin}{T_{\rm in}}
\newcommand{\Tout}{T_{\rm out}}
\newif\ifdraft
\draftfalse 

\drafttrue

\def\eps{\varepsilon}

\newcommand{\F}{\mathcal{F}}

\def\L{\mathcal L}

\newcommand{\M}{\mathcal{M}}

\newcommand{\nada}[1]{}

\newcommand{\R}{\mathbb{R}}

\newcommand{\res}{\mathop{\hbox{\vrule height 7pt width 0.5pt depth 0pt
\vrule height 0.5pt width 6pt depth 0pt}}\nolimits}

\numberwithin{equation}{section}
\mathchardef\emptyset="001F

\newtheorem{theorem}{Theorem}[section]
\newtheorem{definition}[theorem]{Definition}
\newtheorem{prop}[theorem]{Proposition}
\newtheorem{cor}[theorem]{Corollary}
\newtheorem{lemma}[theorem]{Lemma}

\theoremstyle{definition}
\newtheorem{remark}[theorem]{Remark}

\title{Wrinkling in the Lamé problem: a $\Gamma$-convergence approach
	}
\author{Roberta Marziani\footnote{ Dipartimento di Ingegneria dell'Informazione e Scienze Matematiche, 53100 Siena, Italy. E-mail: roberta.marziani@unisi.it}
}

\begin{document}

\maketitle
\vspace{1em}

\begin{center}
	{\large
		\emph{
			Dedicated to the memory of Robert V. Kohn,\\
			who suggested this problem and generously shared his time and insight.
	}}
\end{center}
\vspace{2em}
\begin{abstract}
	We study wrinkling patterns in a thin elastic annulus subjected to radial stretching within the framework of the Föppl--von Kármán theory. Building on the analysis of the Lam\'e problem in Bella and Kohn \cite{BeKo16Lame} (see also \cite{BeKo14+}), we investigate the asymptotic regime $h\to0$ and establish a $\Gamma$-convergence result for suitably rescaled energies after subtraction of the relaxed membrane energy. The limiting functional is a scalar convex measure-valued energy coupled with a constraint on the marginal of the limiting measure, describing the distribution of wrinkle frequencies. We also prove existence and qualitative properties of minimizers of the limiting functional.
\end{abstract}

\noindent\textbf{Keywords:}
Wrinkling, thin elastic sheets, $\Gamma$-convergence, pattern formation, Föppl--von Kármán theory.

\medskip

\noindent\textbf{MSC (2020):}
35B27, 49J45, 74K25, 49S05.
\section{Introduction}
Thin elastic sheets subjected to compression tend to buckle rather than compress, thereby giving rise to wrinkle patterns. Understanding the formation and organization of such patterns is a fascinating problem that has attracted considerable attention in both the physics and mathematics communities over the last decades. Beyond providing insight into morphogenetic processes in plant leaves and animal epithelia, wrinkling phenomena also play an important role in the development of small-scale technologies.

Wrinkle patterns are typically highly complex and may develop across multiple length scales. One source of this complexity is the competition between the preferred wavelength $\lambda$, namely the distance between two adjacent peaks, and the preferred director $\vec n$, that is, the direction along which the sheet oscillates. The wavelength $\lambda$ is a microscopic quantity determined by the local bending rigidity and by the stiffness of a possible substrate attached to the film, whereas $\vec n$ is a macroscopic object governed by the lateral forces acting on the sheet.

From a mathematical perspective, wrinkle patterns may be interpreted as minimizers, or approximate minimizers, of suitable elastic energies. 

 In this work we adopt this variational viewpoint and study a model describing the deformation of a thin annular sheet subjected to radial stretching and allowing for the formation of wrinkles, motivated by physical experiments such as those in \cite{DaShCe12,GeBeMe04,Huangetal}.

More precisely, given parameters $0<R_{\rm in}<R_{\rm out}$ and $0<h\ll R_{\rm out}$, we consider the annular domain
\[
\Omega:=\{x\in\R^2\colon \Rin<|x|<\Rout\},
\]
representing the mid-surface of a thin elastic sheet of thickness $h$. We assume radial dead loads of magnitudes $T_{\rm in}>T_{\rm out}$, satisfying \eqref{eq:hyp}, acting on the inner and outer boundaries, respectively (see Figure \ref{fig1}). 
\begin{figure}
	\centering
	\def\svgwidth{0.35\textwidth}
\begingroup%
  \makeatletter%
  \providecommand\color[2][]{%
    \errmessage{(Inkscape) Color is used for the text in Inkscape, but the package 'color.sty' is not loaded}%
    \renewcommand\color[2][]{}%
  }%
  \providecommand\transparent[1]{%
    \errmessage{(Inkscape) Transparency is used (non-zero) for the text in Inkscape, but the package 'transparent.sty' is not loaded}%
    \renewcommand\transparent[1]{}%
  }%
  \providecommand\rotatebox[2]{#2}%
  \newcommand*\fsize{\dimexpr\f@size pt\relax}%
  \newcommand*\lineheight[1]{\fontsize{\fsize}{#1\fsize}\selectfont}%
  \ifx\svgwidth\undefined%
    \setlength{\unitlength}{768.02669015bp}%
    \ifx\svgscale\undefined%
      \relax%
    \else%
      \setlength{\unitlength}{\unitlength * \real{\svgscale}}%
    \fi%
  \else%
    \setlength{\unitlength}{\svgwidth}%
  \fi%
  \global\let\svgwidth\undefined%
  \global\let\svgscale\undefined%
  \makeatother%
  \begin{picture}(1,0.9805268)%
    \lineheight{1}%
    \setlength\tabcolsep{0pt}%
    \put(0,0){\includegraphics[width=\unitlength,page=1]{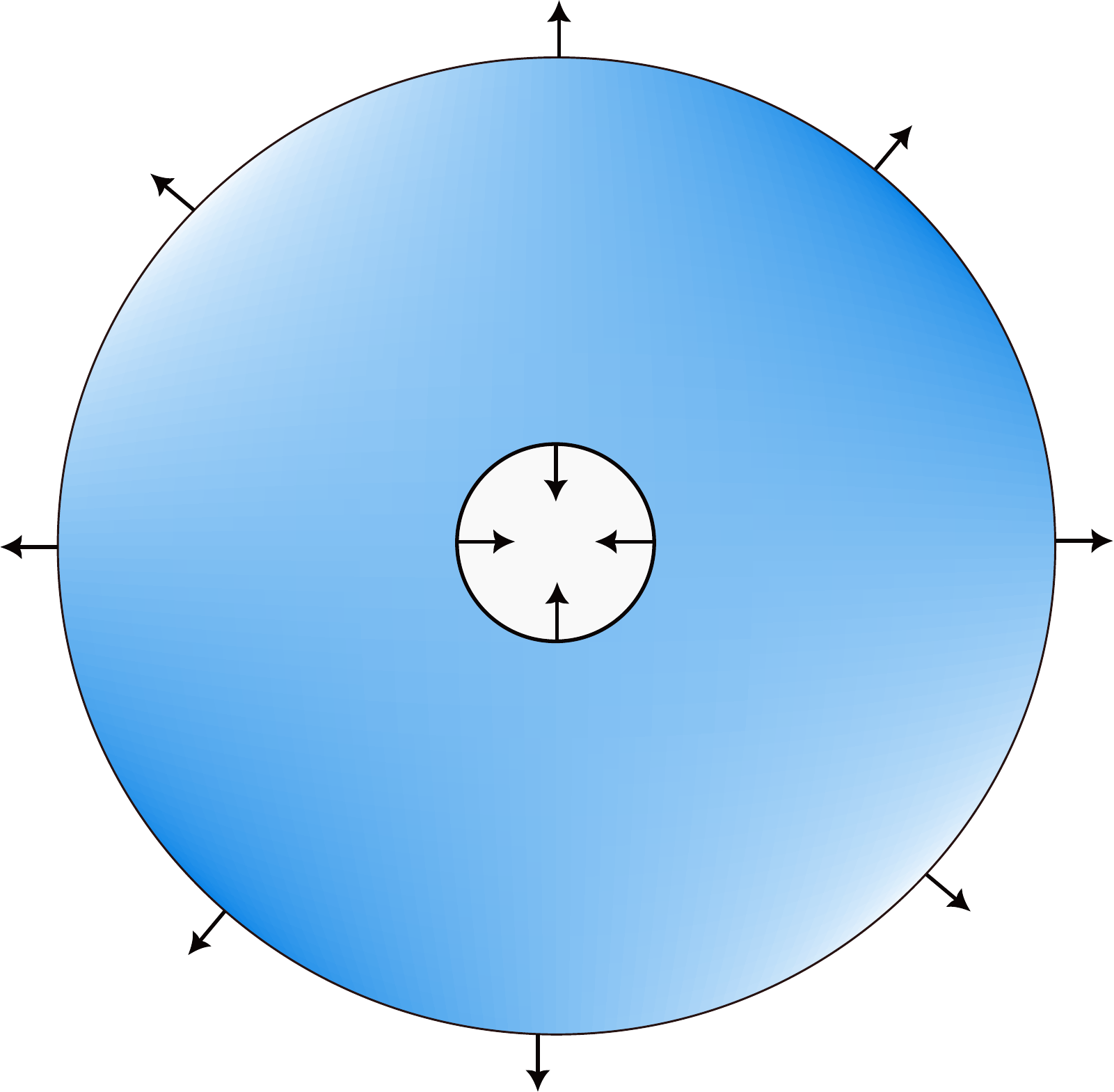}}%
    \put(0,0){\includegraphics[width=\unitlength,page=2]{anello.pdf}}%
      \put(0.46005078,0.47597175){\color[rgb]{0,0,0}\makebox(0,0)[lt]{\lineheight{1.25}\smash{\begin{tabular}[t]{l}\small \text{$T_{\rm in}$}\end{tabular}}}}%
        \put(0.86005078,0.82597175){\color[rgb]{0,0,0}\makebox(0,0)[lt]{\lineheight{1.25}\smash{\begin{tabular}[t]{l}\small \text{$T_{\rm out}$}\end{tabular}}}}%
        \put(0.65005078,0.65597175){\color[rgb]{0,0,0}\makebox(0,0)[lt]{\lineheight{1.25}\smash{\begin{tabular}[t]{l}\small \text{$\Omega$}\end{tabular}}}}%
  \end{picture}%
\endgroup%

\caption{Elastic annular membrane subjected to radial stretching by dead loads of magnitude $\Tin>\Tout$ applied on the inner and outer boundaries, respectively.}\label{fig1}
\end{figure}
To describe the deformation we introduce the in-plane and out-of-plane displacements
\[
u=(u_1,u_2)\colon\Omega\to\R^2,
\qquad
\xi\colon\Omega\to\R.
\]

To each pair $(u,\xi)$ we associate the Föppl--von Kármán energy with boundary loads
\begin{equation}\label{energy1}
	\begin{split}
		E_h(u,\xi):=
		&\frac12\int_\Omega
		\Big(
		|e(u)+\tfrac12\nabla\xi\otimes\nabla\xi|^2
		+
		h^2|\nabla^2\xi|^2
		\Big)\dx
		\\
		&+
		T_{\rm in}\int_{\Gamma^{\rm in}}
		u(x)\cdot\frac{x}{|x|}\di\sigma
		-
		T_{\rm out}\int_{\Gamma^{\rm out}}
		u(x)\cdot\frac{x}{|x|}\di\sigma,
	\end{split}
\end{equation}
where
\[
e(u):=\frac{\nabla u+\nabla^Tu}{2}
\]
denotes the symmetric part of the gradient, and
\[
\Gamma^{\rm in}:=\{|x|=R_{\rm in}\},
\qquad
\Gamma^{\rm out}:=\{|x|=R_{\rm out}\}.
\]
The first term in \eqref{energy1} corresponds to the membrane energy, accounting for stretching and compression, while the second term represents the bending energy and penalizes out-of-plane curvature. The boundary terms determine the radial stresses and therefore influence the onset and orientation of wrinkling patterns.

The above model may be viewed as a linearized version of the fully nonlinear elastic model studied in \cite{BeKo14+}, although a complete rigorous derivation of \eqref{energy1} from three-dimensional elasticity remains open (see, for instance, \cite{Muller}).

A key feature of the Lam\'e problem is that purely radial loading generates azimuthal compression. Indeed, when stronger radial loads are applied at the inner boundary, concentric material circles are forced to move closer to the center. This creates an excess of arclength in the azimuthal direction, which must either be stored through compression or released through the formation of wrinkles. Near the outer boundary, on the other hand, the material remains under tension and no wrinkling occurs. Equivalently, the amount of ``excess arclength'' is positive close to the inner boundary and negative near the outer boundary, and therefore necessarily vanishes along an intermediate free boundary separating the wrinkled and tensile regions. In \cite{BeKo14+,BeKo16Lame}, this excess arclength is characterized as the minimizer of a one-dimensional variational problem and is, in particular, nondegenerate near its zero set.

A central question is to understand the asymptotic behavior of minimizers of $E_h$ as $h\to0$: what is the minimum energy, how is compression released, and what geometric features characterize optimal wrinkle patterns?

Experiments suggest that minimizers of \eqref{energy1} exhibit pure stretching in the outer region of $\Omega$ and wrinkling in the inner region. More precisely, there exists a free boundary of radius $\Rin<R_0<\Rout$ separating the tensile region from the wrinkled one (see Figure \ref{fig2}). 
\begin{figure}
	\centering
	\def\svgwidth{0.35\textwidth}
\begingroup%
  \makeatletter%
  \providecommand\color[2][]{%
    \errmessage{(Inkscape) Color is used for the text in Inkscape, but the package 'color.sty' is not loaded}%
    \renewcommand\color[2][]{}%
  }%
  \providecommand\transparent[1]{%
    \errmessage{(Inkscape) Transparency is used (non-zero) for the text in Inkscape, but the package 'transparent.sty' is not loaded}%
    \renewcommand\transparent[1]{}%
  }%
  \providecommand\rotatebox[2]{#2}%
  \newcommand*\fsize{\dimexpr\f@size pt\relax}%
  \newcommand*\lineheight[1]{\fontsize{\fsize}{#1\fsize}\selectfont}%
  \ifx\svgwidth\undefined%
    \setlength{\unitlength}{689.49126957bp}%
    \ifx\svgscale\undefined%
      \relax%
    \else%
      \setlength{\unitlength}{\unitlength * \real{\svgscale}}%
    \fi%
  \else%
    \setlength{\unitlength}{\svgwidth}%
  \fi%
  \global\let\svgwidth\undefined%
  \global\let\svgscale\undefined%
  \makeatother%
  \begin{picture}(1,0.97934626)%
    \lineheight{1}%
    \setlength\tabcolsep{0pt}%
    \put(0,0){\includegraphics[width=\unitlength,page=1]{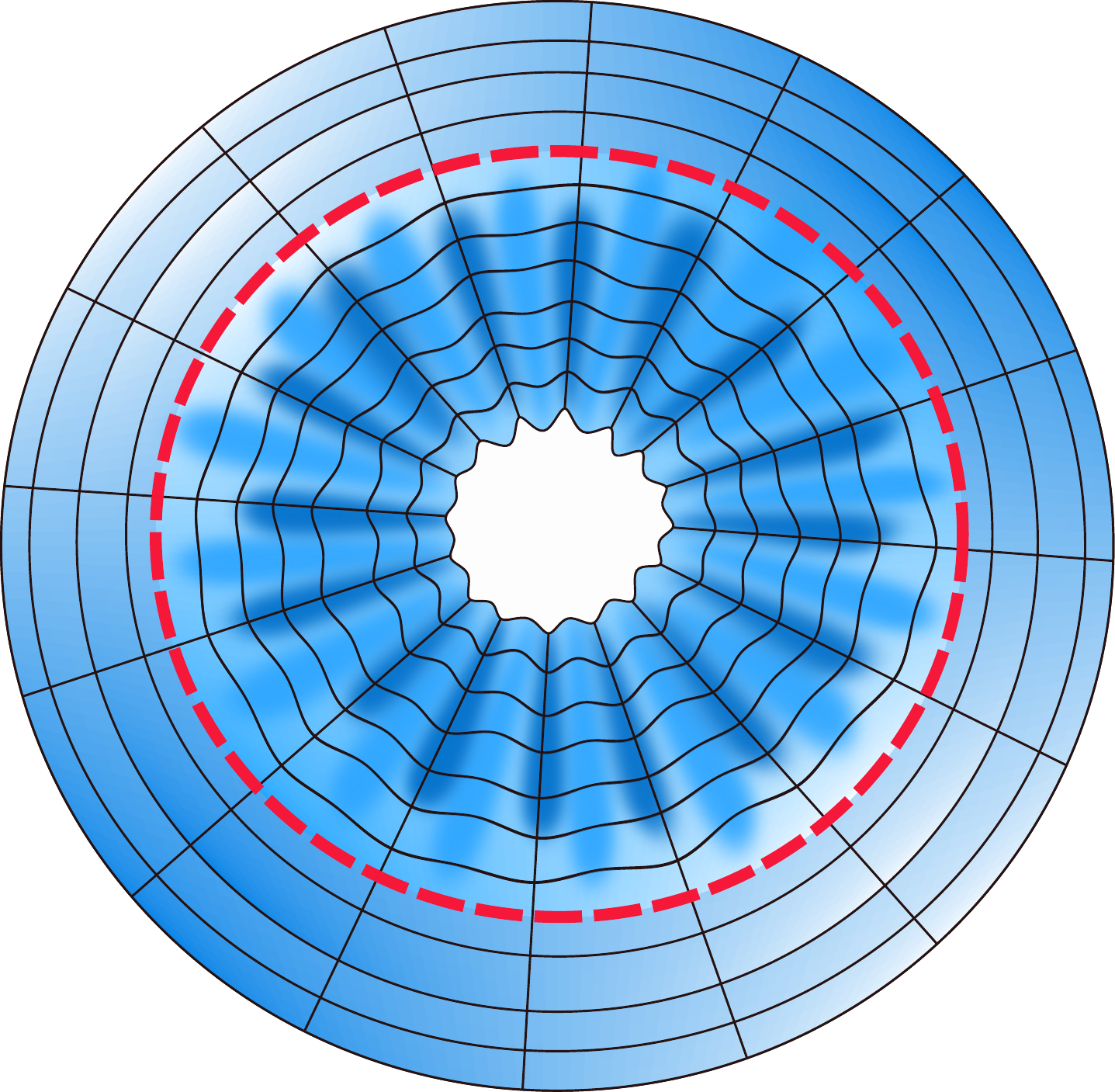}}%
     \put(0.88005078,0.494597175){\color[rgb]{0,0,0}\makebox(0,0)[lt]{\lineheight{1.25}\smash{\begin{tabular}[t]{l}\small \text{$\color{red}R_0$}\end{tabular}}}}%
  \end{picture}%
\endgroup%

	\caption{The red dotted curve of radius $R_0$ marks the transition between the inner wrinkled (relaxed) region and the outer stretched (unrelaxed) region.}\label{fig2}
\end{figure}
In connection with this picture, it was shown in \cite{BeKo16Lame} (and in \cite{BeKo14+} for the nonlinear analogue) that
\begin{equation}\label{energy-scaling}
	\mathcal{E}_0+C_0h
	\le
	\inf_{(u,\xi)}E_h(u,\xi)
	\le
	\mathcal{E}_0+C_1h,
\end{equation}
for some constant $\mathcal{E}_0\in\R$ and $0<C_0\le C_1$.

The quantity $\mathcal E_0$ represents the minimum value of the relaxed elastic energy in the limit $h\to0$, where compressive strains may be released at no energetic cost and only stretching contributes to the energy.  The correction of order $h$ in \eqref{energy-scaling} represents the leading energetic cost associated with the formation of wrinkles, resulting from the competition between bending and stretching effects.

The proof of the upper bound in \eqref{energy-scaling} relies on a construction in which a cascade of wrinkles is superimposed onto the relaxed solution through a branching mechanism in order to release compression in the inner region $\Rin<r<R_0$. However, such branching patterns are not typically observed experimentally. By contrast, the ansatz proposed in the physics literature consists of a single family of wrinkles with slowly varying wavelength and yields a higher energetic cost, namely a correction of order $h(|\log h|+1)$ (cf.~\cite{DaSh++11}).

This discrepancy reveals that scaling laws alone are insufficient to identify the effective wrinkling mechanism. In particular, they do not distinguish between branching constructions and experimentally observed configurations, nor do they describe how wrinkle frequencies are distributed in the asymptotic regime.

A central goal of this paper is therefore to determine the exact prefactor in the linear correction term in \eqref{energy-scaling} and to characterize the limiting variational problem governing the optimal wrinkling patterns.

To this end, we perform the $\Gamma$-convergence analysis, as $h\to0$, of the rescaled energies
\begin{equation}\label{scaled}
	\frac{E_h(u,\xi)-\mathcal E_0}{h}.
\end{equation}
We also prove existence of minimizers for the limiting functional and establish equipartition properties.\\

 Our analysis builds upon the measure-theoretic approach introduced in \cite{BeMa25} (see also \cite{BellaARMA}) for a related toy model. Although considerably simpler than the original annular setting, the model introduced in \cite{BeMa25} retains its essential geometric features. The idea is to focus on a neighborhood of the free boundary separating the stretched and wrinkled regions and replace it with a thin elastic sheet of thickness $h$ occupying the rectangular domain $[-1,1]^2\subset\R^2$, with the transition region corresponding to $\{0\}\times[-1,1]$. The sheet is stretched in the horizontal direction and subjected to a suitable vertical prestrain, chosen so that the left half of the rectangle is stretched whereas the right half is compressed. In this way, the model captures the geometric need to ``waste arclength'' responsible for wrinkle formation in the original Lam\'e problem.

In \cite{BeMa25} it was proved that the corresponding rescaled energies, of the form \eqref{scaled}, $\Gamma$-converge to a scalar convex functional defined on measures satisfying a suitable constraint. At a heuristic level, the limiting functional emerges by rewriting the energy in terms of the Fourier coefficients of the out-of-plane displacement. As the thickness tends to zero, increasingly many frequencies become energetically relevant and, in the limit, generate a diffuse measure describing the distribution of wrinkle frequencies. The associated constraint encodes the amount of arclength that must be wasted in order to accommodate the wrinkles.

A key feature of the problem is that the excess arclength vanishes linearly near the free boundary between the wrinkled and planar regions. In the toy model this profile is therefore replaced by its first-order Taylor approximation. As will become apparent in the analysis, this linear behaviour is precisely what leads to the appearance of infinitely many frequencies in the limit. Indeed, if the excess arclength vanished more slowly, for instance quadratically, one would expect the optimal configuration to involve only a single frequency. By contrast, in the present regime higher and higher frequencies become energetically relevant as one approaches the transition region.\\

Our model belongs to the class of variational problems exhibiting energy-driven pattern formation, namely the emergence of complex microstructures resulting from the competition between different energetic mechanisms acting at different scales \cite{KohnICM}. Typically, one energetic contribution favors smoothness or uniformity, while another favors oscillations, localization, or phase separation. The interplay between these competing effects gives rise to highly nontrivial patterns characterized by multiple length scales. In many situations, scaling laws determine the dominant energetic regime but do not characterize the geometry of minimizing configurations. A finer asymptotic analysis, often based on $\Gamma$-convergence methods, is therefore required to understand the effective pattern-selection mechanism. In this perspective, the limiting variational problem obtained through $\Gamma$-convergence describes the selection of wrinkle patterns beyond the information provided by scaling laws alone.

A first landmark result in this direction is the seminal work of Kohn and M\"uller \cite{KoMu94} on domain branching in martensitic phase transformations, later extended by Conti \cite{Co00}; see also e.g., \cite{capellaotto1,capellaotto2,RuTr23,RuTr23b,KnuepferKohnOtto} for further developments. Similar multiscale mechanisms arise in several other variational models, including magnetic domain patterns in micromagnetics \cite{Ot02,BrKn23,RiRo23}, superconducting vortex patterns \cite{Se15}, block copolymer microstructures \cite{Ch01},  dislocation networks in crystal plasticity \cite{GMS,CGM}, phase separation \cite{Marz} and gradient damage models \cite{BMZ,BEMZ1,BEMZ2,MS}.

Within the context of thin elastic sheets, wrinkling problems may be broadly divided into two classes, depending on whether compression is present in one or in two directions. In compressive wrinkling problems, such as blistering and delamination \cite{KoNg13,BeKo15,BoCoMu16,PG20,JiSt01,BBCoDeMu00,BBCoDeMu02}, crumpling \cite{CoMa08,Ve04}, and conical singularities \cite{Ol18,Ol19}, the relaxed energy is often highly degenerate: its minimum is zero and is attained by many different configurations. This degeneracy makes the analysis of the next-order expansion of the energy particularly delicate.

 This strong degeneracy sharply contrasts with the structure of tensile wrinkling problems, where the relaxed energy typically admits a distinguished minimizer. In such problems, compression may arise from boundary conditions, as in the raft problem \cite{BrKoNg13,benny-raft}, twisted ribbons \cite{KoOB18}, hanging drapes \cite{romandrapes,BeKo17drapes}, or compressed cylinders \cite{To18}; from prescribed incompatible strains \cite{BeKo14,MaSh19}; or from curvature effects \cite{BeKo17,BeRo23,To21}.

The Lam\'e problem considered in this paper belongs to the latter class. Indeed, stronger radial loads applied at the inner boundary force concentric material circles to move closer to the center, thereby inducing azimuthal compression in an inner region of the annulus, while the outer region remains tensile. Equivalently, the geometry of the deformation creates an excess of arclength that must either be stored through compression or released through wrinkling. The corresponding relaxed problem is governed by a one-dimensional radial minimizer, and the goal of the present work is to identify the next-order energy and the limiting variational problem describing the distribution of wrinkle frequencies.\\


We now describe the structure of the limiting problem and the main ideas underlying the analysis. In order to study the asymptotic regime $h\to0$, we first rewrite the energy \eqref{energy1} in polar coordinates
\[
(u_r,u_\theta,\xi)(r,\theta)\colon (\Rin,\Rout)\times[-\pi,\pi)\to\R^3,
\]
where $u_r$ and $u_\theta$ denote the radial and tangential components of the in-plane displacement. Owing to the radial geometry, it is natural to assume that the displacement is $2\pi$-periodic in the angular variable $\theta$.

The next step consists in rescaling the angular variable according to the characteristic wavelength of the wrinkles. Balancing the stretching and bending contributions suggests that the optimal wavelength is of order $\sqrt h$ (cf.~\cite{BeKo14+,BellaARMA}). We therefore introduce the scaling parameter
\[
L:=h^{-1/2},
\]
and perform the asymptotic analysis in the limit $L\to\infty$. After rescaling, the rescaled displacement becomes $2\pi L$-periodic in $\theta$, and the rescaled energy takes the form
\begin{equation*}\label{scaled1}
	L^2\bigl(\mathcal F_L(u_r,u_\theta,\xi)-\mathcal E_0\bigr),
\end{equation*}
defined on the enlarged domain
\[
(\Rin,R_0)\times[-\pi L,\pi L),
\]
see \eqref{def:F_L} for the precise definition of $\mathcal F_L$.

As $L\to\infty$, the leading contribution to the energy is given by
\[
\pi\fint_{-\pi L}^{\pi L}\int_{\Rin}^{R_0}
\left(\dot u^\star(r)
(\partial_r\xi)^2
+
\frac{(\partial_{\theta\theta}\xi)^2}{r^4}
\right)
r\di r\di\theta,
\]
where $u^\star$ denotes the one-dimensional relaxed solution and the dot denotes differentiation with respect to $r$. The above expression reflects the competition between radial stretching and azimuthal bending of the out-of-plane displacement $\xi$. At the same time, the excess arclength that must be released through wrinkling gives rise to the constraint
\begin{equation*}\label{arclength}
	\fint_{-\pi L}^{\pi L}
	\frac{(\partial_\theta\xi)^2}{2}
	\di\theta
	\sim
	-r u^\star(r)
	\qquad\text{in }(\Rin,R_0).
\end{equation*}

Since the angular domain becomes larger and larger as $L\to\infty$, it is convenient to exploit periodicity and rewrite the problem in terms of Fourier coefficients of $\xi$. Denoting these coefficients by $a_k(r)$, with frequencies $k\in\mathbb Z/L$, the leading-order energy and the constraint formally become
\[
\sum_{k\in\mathbb Z/L}
\int_{\Rin}^{R_0}
\left(
(\dot a_k(r))^2
+
a_k(r)^2k^4
\right)
r\di r,
\]
and
\[
\sum_{k\in\mathbb Z/L}
a_k(r)^2k^2
\sim
-r u^\star(r)
\qquad\text{in }(\Rin,R_0).
\]
As the grid $\mathbb Z/L$ becomes finer and finer, the wrinkle frequencies become asymptotically dense, and the natural limiting framework is therefore measure-theoretic. In order to linearize the constraint, we introduce the measure 
\[
\mu^L(\xi):=
\sum_{k\in\mathbb Z/L}
a_k(r)^2k^2\mathcal{L}^1\res(\Rin,\Rout)\times\delta_k,
\]
which are naturally interpreted as measures describing the distribution of wrinkle frequencies. The limiting functional $\mathcal F_\infty$ is then defined on weak limits of the measures $\mu^L$. Correspondingly, the arclength constraint is encoded in a condition on the marginal of the limiting measure; see Section~\ref{sec:main-result} for the precise formulation.

Our $\Gamma$-convergence result is complemented by a coercivity statement ensuring compactness for sequences with uniformly bounded energy. We next outline the proof strategy and discuss the main analytical challenges.
On the one hand, the $\Gamma$-liminf inequality is relatively direct, as it follows from Reshetnyak's lower semicontinuity theorem for measures. On the other hand, the proof of the $\Gamma$-limsup inequality is considerably more involved, and we summarize its main steps below. The strategy is inspired by the construction developed for the toy model in \cite{BeMa25}, although the radial geometry introduces several additional technical difficulties.

 Given a limiting measure, the basic idea for the upper bound is to discretize it in the $k$-variable (which denotes the Fourier counterpart of $\theta$). While the Fourier variable $k$ is discretized, the radial variable $r$ remains continuous and enters the construction through the weights appearing in the energy and in the arclength constraint. A key role is played by the introduction of an intermediate scale $L_0$, satisfying
 \[
 1\ll L_0\ll L,
 \]
 which determines the period of the building blocks used in the construction. This intermediate scale separates the microscopic wrinkle oscillations from the macroscopic radial variation and allows one to localize the construction while still capturing the limiting measure-valued distribution of frequencies.
 
 One then constructs the Fourier coefficients of the out-of-plane displacement and subsequently recovers the in-plane displacement so that all terms in the energy, except for the leading contribution, vanish in the limit.

Since the energy \eqref{energy1} contains higher-order derivatives, the Fourier coefficients must have sufficient regularity. For this reason, after discretizing the limiting measure, we regularize the coefficients by mollification at a suitably chosen small scale $\varepsilon=\varepsilon(L)$. Since this operation modifies the arclength constraint, it is more convenient to mollify the quadratic coefficients $a_k^2(r)$ rather than the coefficients $a_k(r)$ themselves.

The radial setting introduces additional difficulties. In the toy model, the amount of arclength to be wasted was replaced by its first-order Taylor expansion near the free boundary, which considerably simplifies the analysis. In the present annular problem, however, this quantity is given by
\[
-ru^\star(r)=-T_{\rm in}R_{\rm in}\, r\log\frac r{R_0},
\]
and its nonlinear dependence on $r$ creates further complications. One issue is that, in order to mollify the coefficients, they must first be extended outside the interval $(R_{\rm in},R_0)$, and this extension has to be chosen so as not to significantly alter the prescribed arclength profile.

A second difficulty arises in the construction of the in-plane displacement. Unlike in the toy model, the radial energy depends explicitly on $u_r$ and $u_\theta$, and not only on their derivatives. As a consequence, the two components of the in-plane displacement cannot be chosen independently; rather, they must be correlated in order to make the non-leading terms in the energy vanish in the limit.

Our second main result concerns the structure of minimizers of the limiting functional. In particular, we prove existence of minimizers, establish regularity properties through a disintegration of measures with respect to the frequency variable, and derive an equipartition identity between the two terms appearing in the limiting energy. We also show that minimizers cannot concentrate on arbitrarily small frequencies.

The existence result is obtained via the direct method of the calculus of variations, exploiting the fact that the stretching contribution to the limiting energy can be interpreted as a particular instance of the Benamou--Brenier functional arising in optimal transport theory, and is therefore convex. The equipartition identity is instead proved through a contradiction argument, which in turn implies that minimizers do not have small frequencies. Since these arguments closely follow those developed in \cite{BeMa25}, we omit the proofs here.\\

The paper is organized as follows. In Section~\ref{sec:preliminaries} we introduce the notation and collect several preliminary results. In particular, Subsection~\ref{sec:relaxed-pb} is devoted to the relaxed problem, while in Subsection~\ref{sec:scaling} we introduce the rescaled energy and discuss the main heuristics underlying the analysis.

In Section~\ref{sec:main-result} we introduce the measure-theoretic framework and state the main $\Gamma$-convergence result, Theorem~\ref{theo:main_theo}. Section~\ref{sec:compactness_lwb} contains the compactness result for sequences with uniformly bounded rescaled energy (Proposition~\ref{prop:compactness}) together with the $\Gamma$-liminf inequality (Proposition~\ref{prop:lower_bound}). The construction for the $\Gamma$-limsup inequality is carried out in Section~\ref{sec:upb}, where Proposition~\ref{prop:upb} is proved. Finally, in Section~\ref{sec:regularity} we study existence and qualitative properties of minimizers of the limiting functional (Theorem~\ref{thm:minimizers}).

\section{Preliminaries}\label{sec:preliminaries}

In this section we introduce the notation and preliminary results needed throughout the paper.
\subsection{Notation} \label{sec:notation}
\begin{enumerate}[label=$(\alph*)$]
	\item $a\lesssim b$ denotes $a\le Cb$ for some constant $C>0$;
	\item $\chi_A$ denotes the characteristic function of the set $A$;
	\item $\mathcal{L}^1$ denotes the 1-dimensional Lebesgue measure;
	\item $\delta_k$ denotes the Dirac measure on $k\in\R$;
\item $\mathcal{M}_b(A)$ denotes the space of bounded Radon measures on $A$ with $A\subset\R^2$ Borel measurable;
\item $\mathcal{M}^+_b(A)$ denotes the subspace of  $\mathcal{M}_b(A)$ of positive bounded Radon measures;
\item For a function $f\colon A\subset\R\to\R$ we denote by $\dot f(r)$  and $\ddot{f}(r)$ the first and the second derivative, respectively;
\item For a function $u\colon A\subset\R^2\to\R$ we denote by $\partial_{\small\underbrace{r\dots r}_{i \text{ times}}\small\underbrace{\theta\dots \theta}_{j \text{ times}}}f$ its partial derivative 
$$D^{i+j}f(r,\theta)= \frac{\di^j}{\di\theta^j}\frac{\di^i}{\di r^i}f(r,\theta)\,,\quad i,j\in\mathbb{N},\, 1\le i+j\le3\,;$$
\item For a measure $\mu\in \mathcal{M}_b(A)$ we denote by $\mu_{,r}$ its distributional derivative with respect to the first variable;
\item For a measure $\mu\in \mathcal{M}_b(A)$ we denote by $|\mu|\in \mathcal{M}^+_b(A)$ its total variation;
\item For $\tilde\mu=(\mu_1,\mu_2)\in ( \mathcal{M}_b(A))^2$ we analogously denote by $|\tilde\mu|\in \mathcal{M}^+_b(A)$ its total variation;
\item For $\mu_1\in \mathcal{M}_b(A)$, $\mu_2\in \mathcal{M}^+_b(A)$ we write $\mu_1\ll\mu_2$ if $\mu_1$ is absolute continuous with respect to $\mu_2$ and we indicate by $\frac{\di\mu_1}{\di\mu_2}\in L^1(A,\mu_2)$ the associated density {(Radon-Nikod\'ym derivative)};
\item  $f*g(r)$ denotes the convolution between two functions $f$ and $g$.\black
\end{enumerate}
\subsection{The relaxed problem}\label{sec:relaxed-pb}

We begin by recalling the characterization of the relaxed energy in the limit $h\to0$. In \cite{CoMaMu06} (see also \cite{pipkin1}) it was shown that the functionals in \eqref{energy1} $\Gamma$-converge, as $h\to0$, to the relaxed functional
\begin{equation*}
	\begin{split}
		E_{\rm rel}(u,\xi):=
		&\frac12\int_\Omega
		W_{\rm rel}\Big(
		e(u)+\frac12\nabla\xi\otimes\nabla\xi
		\Big)\dx
		\\
		&+
		T_{\rm in}\int_{\Gamma^{\rm in}}
		u(x)\cdot\frac{x}{|x|}\di\sigma
		-
		T_{\rm out}\int_{\Gamma^{\rm out}}
		u(x)\cdot\frac{x}{|x|}\di\sigma,
	\end{split}
\end{equation*}
where $W_{\rm rel}(F)$ denotes the square of the Frobenius norm of the positive part of $F$, namely
\[
W_{\rm rel}(F)
:=
\inf\{
|F+A|^2
:
A=A^T\ge0
\}
=
(\lambda_1(F))_+^2+(\lambda_2(F))_+^2.
\]
Here $\lambda_1(F),\lambda_2(F)$ denote the eigenvalues of the symmetric matrix $F$, and $\lambda_+:=\max\{\lambda,0\}$.

The minimization over positive semidefinite matrices in the definition of $W_{\rm rel}$ corresponds to the relaxation of compressive strains through arbitrarily fine oscillations. In particular, $W_{\rm rel}$ vanishes on all contractions, reflecting the fact that, as $h\to0$, the bending contribution disappears and compression can be released through increasingly fine wrinkling patterns at negligible energetic cost. Moreover, the density $W_{\rm rel}$ is convex.

\medskip

\noindent\textbf{Passage to polar coordinates.}
Since both the domain and the applied loads are radially symmetric, it is convenient to rewrite the energy in polar coordinates $(r,\theta)\in (R_{\rm in}, R_{\rm out})\times [-\pi,\pi)$. We introduce the orthonormal basis $(e_r,e_\theta,e_\xi)$ given by
\[
e_r:=(\cos\theta,\sin\theta,0),
\qquad
e_\theta:=(-\sin\theta,\cos\theta,0),
\qquad
e_\xi:=(0,0,1).
\]
Under the change of variables
\[
x=x(r,\theta):=(r\cos\theta,r\sin\theta),
\]
we define
\[
(\hat u_1(r,\theta),\hat u_2(r,\theta),\hat\xi(r,\theta))
:=
(u_1(x(r,\theta)),u_2(x(r,\theta)),\xi(x(r,\theta)))
\]
and write
\[
(\hat u_1,\hat u_2,\hat\xi)
=
u_r(r,\theta)e_r
+
u_\theta(r,\theta)e_\theta
+
\xi(r,\theta)e_\xi,
\]
where $u_r$ and $u_\theta$ denote the radial and tangential components of the in-plane displacement.

In these coordinates, the functional $E_{\rm rel}$ becomes
\begin{equation*}
	\begin{split}
		E_{\rm rel}(u_r,u_\theta,\xi)
		=
		&\frac12
		\int_{-\pi}^{\pi}
		\int_{R_{\rm in}}^{R_{\rm out}}
		W_{\rm rel}(F(u_r,u_\theta,\xi))
		\,r\di r\di\theta
		\\
		&+
		T_{\rm in}R_{\rm in}
		\int_{-\pi}^{\pi}
		u_r(R_{\rm in},\theta)\di\theta
		-
		T_{\rm out}R_{\rm out}
		\int_{-\pi}^{\pi}
		u_r(R_{\rm out},\theta)\di\theta,
	\end{split}
\end{equation*}
with
\[
F(u_r,u_\theta,\xi):=
\begin{pmatrix}
	\partial_r u_r+\frac12(\partial_r\xi)^2
	&
	\frac1{2r}\partial_\theta u_r
	+\frac12\partial_r u_\theta
	-\frac1{2r}u_\theta
	+\frac1{2r}\partial_r\xi\partial_\theta\xi
	\\[1em]
	\frac1{2r}\partial_\theta u_r
	+\frac12\partial_r u_\theta
	-\frac1{2r}u_\theta
	+\frac1{2r}\partial_r\xi\partial_\theta\xi
	&
	\frac{u_r}{r}
	+\frac1r\partial_\theta u_\theta
	+\frac1{2r^2}(\partial_\theta\xi)^2
\end{pmatrix}.
\]

Observe that, in the relaxed problem, it is energetically favorable to choose the out-of-plane displacement $\xi=0$. Indeed, the nonlinear contribution
\[
\frac12\nabla\xi\otimes\nabla\xi
\]
is positive semidefinite and therefore cannot decrease the positive part of the eigenvalues appearing in the relaxed density $W_{\rm rel}$. Moreover, the boundary loading depends only on the radial component $u_r$, while nontrivial angular dependence of the in-plane displacement produces additional shear contributions through the off-diagonal term
\[
\frac1{2r}\partial_\theta u_r+\frac12\partial_r u_\theta-\frac1{2r}u_\theta .
\]
This quantity measures the tendency of neighboring material elements to slide tangentially relative to each other. Such distortions are energetically unfavorable since they increase the positive part of the strain. Consequently, minimizers of the relaxed problem may be sought among radially symmetric configurations, namely
\[
u_r=u_r(r),
\qquad
u_\theta=0,
\qquad
\xi=0.
\]

	We next consider the one-dimensional minimization problem
	\begin{equation*}\label{min-relaxed-pb}
		\inf_{u_r(r)}
		E_{\rm rel}(u_r,0,0).
	\end{equation*}

\noindent\textbf{One-dimensional solution to the relaxed problem.} Assume $u_r=v(r)$, then   $$E_{\rm rel}(v,0,0)=F(v)$$ with
\begin{equation*}\label{eq:Erel-radial}
F(v):=2\pi	\left({ \frac12}\int_{R_{\rm in}}^{R_{\rm out}} \Big((\dot v)_+^2+ \Big(\frac{v}{r}\Big)_+^2 \Big)r\di r+ T_{\rm in}R_{\rm in}v(R_{\rm in})-
	T_{\rm out}R_{\rm out}v(R_{\rm out})
	\right)\,.
\end{equation*}
We then look for a solution to the one-dimensional minimization problem
\begin{equation}\label{min-radial-relaxed-pb}
{\rm m_{\rm rad}}:=	\inf_{v\in {W^{1,2}(\Rin,\Rout)}}F(v)\,.
\end{equation}
For the reader's convenience we recall the following result in \cite{BeKo16Lame}.
\begin{lemma}[Existence of a solution to $m_{\rm rad}$]\label{lem:1dim-sol}
	Let $T_{\rm in}> T_{\rm out}>0$, $R_{\rm out}>R_{\rm in}>0$ be 
fixed.

 If
	\begin{equation}\label{eq:hyp}
\Tin\Rin<\Tout\Rout\quad\text{ and }\quad\frac{\Tin}{\Tout}>2\frac{\Rout^2}{\Rin^2+\Rout^2}\,,
	\end{equation}
then \eqref{min-radial-relaxed-pb} has a unique minimizer $u^\star$. Moreover $u^\star$ satisfies the following:
\begin{enumerate}[label=$(\roman*)$]
	\item $u^\star\in W^{1,2}(\Rin,\Rout)\cap C(\Rin,\Rout)$;
\item There exists ${R}_{0}\in (\Rin,\Rout)$ such that
\begin{equation*}\label{sign-u-star}
u^\star<0 \,\text{ in }\, (\Rin,R_0)\quad\text{ and }\quad u^\star>0 \,\text{ in }\, (R_0,\Rout)\,;
\end{equation*}
\item $u^\star$ satisfies the Euler-Lagrange equation
\begin{equation}\label{EL+}
	\begin{cases}
		\dfrac{\di}{\di r}(r\dot u^\star(r))=\Big(\dfrac{u^\star(r)}{r}\Big)_+&\text{ in }(\Rin,\Rout)\,,\\[1em]
		\dot u^\star(\Rin)=\Tin\,,\\[1em]
	\dot u^\star(\Rout)=\Tout\,;
	\end{cases}
\end{equation}
\item For every $r\in (\Rin,\Rout)$
\begin{equation*}\label{sign-u-star-prime}
 \Tin\frac{\Rin}{\Rout}\le \dot u^\star(r)\le \Tout\frac{\Rout}{\Rin} \,;
\end{equation*}
\item For every $r\in (\Rin,R_0)$, the function $\beta(r):=\sqrt{-u^\star(r)r}$ satisfies
\begin{equation*}\label{eq:estimate-beta}
|\beta(r)|\lesssim\sqrt{R_0-r}\,, \quad  |\dot \beta(r)|\lesssim\frac1{\sqrt{R_0-r}}\,, \quad  |\ddot \beta(r)|\lesssim\frac1{(\sqrt{R_0-r})^3}\,.
\end{equation*}
\end{enumerate}
  If $\Tin\Rin=\Tout\Rout$ then \eqref{min-radial-relaxed-pb} has a unique negative solution (up to additive constant). In all other cases \eqref{min-radial-relaxed-pb} has no solution.
\end{lemma}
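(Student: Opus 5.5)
The plan is to solve the Euler--Lagrange problem explicitly, guessing a minimizer of a specific shape and then verifying it.

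\textbf{Convexity and uniqueness.} Since $s\mapsto s_+^2$ is convex, the functional $F$ in \eqref{min-radial-relaxed-pb} is convex. Any critical point is therefore a minimizer.

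For uniqueness, take two minimizers. Strict convexity of $s\mapsto s^2$ on $s>0$ gives two facts: $\dot v$ agrees on the set where some $\dot v>0$, and $v/r$ agrees where $v>0$. Using $\dot u^\star>0$, the two minimizers can then differ only by a constant $c$ with $(u+c)_+=u_+$. The linear boundary term changes by $2\pi c(\Tin\Rin-\Tout\Rout)\neq0$, which forces $c=0$ under \eqref{eq:hyp}. In the degenerate case $\Tin\Rin=\Tout\Rout$ this term vanishes, and we get the family up to additive constants.

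\textbf{Construction of the candidate.} Writing the first variation gives $(r\dot v)'=(v/r)_+$ when $\dot v>0$, with natural boundary conditions $\dot v(\Rin)=\Tin$ and $\dot v(\Rout)=\Tout$; this is \eqref{EL+}. I look for $v$ negative on $(\Rin,R_0)$ and positive after.

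On $(\Rin,R_0)$ we have $r\dot v=\Tin\Rin$, so $v=\Tin\Rin\log(r/R_0)$.

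On $(R_0,\Rout)$ the equation is $r^2\ddot v+r\dot v-v=0$, so $v=Ar+B/r$. The conditions are $v(R_0)=0$ and $C^1$ matching $R_0\dot v(R_0)=\Tin\Rin$, which give $A=\Tin\Rin/(2R_0)$ and $B=-AR_0^2$. The remaining condition $\dot v(\Rout)=\Tout$ reads
\[
\frac{\Tin\Rin}{2R_0}\Big(1+\frac{R_0^2}{\Rout^2}\Big)=\Tout .
\]
Denote the left-hand side by $g(R_0)$. Since $g$ is decreasing on $(0,\Rout)$, a root $R_0\in(\Rin,\Rout)$ exists if and only if $g(\Rout)<\Tout<g(\Rin)$.

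The first inequality is $\Tin\Rin<\Tout\Rout$. The second is $\Tin(\Rin^2+\Rout^2)/(2\Rout^2)>\Tout$, which is exactly the second condition in \eqref{eq:hyp}.

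\textbf{Verification of the stated properties.} I then check (i)--(iv) directly from the formulas. In particular $\dot v=\Tin\Rin/r$ on the inner part, and $\dot v=A(1+R_0^2/r^2)$ on the outer part. The outer expression is decreasing from $\Tin\Rin/R_0$ down to $\Tout$. This gives positive bounds between $\Tin\Rin/\Rout$ and $\Tout\Rout/\Rin$; the latter holds since $\Tin\Rin/r\le\Tout\Rout/\Rin$.

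\textbf{Nonexistence.} If $\Tin\Rin>\Tout\Rout$, taking $v\equiv c\to-\infty$ sends $F\to-\infty$. If the second condition fails, I test $v=c+tr$-type competitors: the linear growth of the boundary term dominates the quadratic stretching cost on the set where $v<0$. I expect this to be the main obstacle, since one must choose competitors that make the boundary term unbounded below while keeping $(v/r)_+$ controlled. Following \cite{BeKo16Lame}, I would take $v$ negative throughout and compare $\frac12\int\dot v^2 r$ with the boundary work.

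\textbf{Estimates for $\beta$.} For (v), $\beta^2=-\Tin\Rin\, r\log(r/R_0)$, which is smooth and vanishes linearly at $R_0$ with nonzero slope. Hence $\beta\sim\sqrt{R_0-r}$, and differentiating gives the stated bounds on $\dot\beta$ and $\ddot\beta$.
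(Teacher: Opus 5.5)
The paper does not prove this lemma; it quotes it from \cite{BeKo16Lame}, so your argument has to stand on its own. For the main case it does. Convexity of $F$ makes every critical point a minimizer. Your candidate, $\Tin\Rin\log(r/R_0)$ on $(\Rin,R_0)$ and $\frac{\Tin\Rin}{2R_0}\bigl(r-\frac{R_0^2}{r}\bigr)$ on $(R_0,\Rout)$ with $g(R_0)=\Tout$, satisfies \eqref{EL+} weakly. In fact $r\dot u^\star\in C^1$, since $(u^\star/r)_+$ vanishes at $R_0$. The equivalence of $g(\Rout)<\Tout<g(\Rin)$ with \eqref{eq:hyp} is right. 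Uniqueness follows from $\dot u^\star>0$, and $u^\star>0$ on $(R_0,\Rout)$ already forces $c=0$ without the boundary term. Properties (i)--(v) follow from the formulas. Your outer branch is the correct one: the expression displayed in Remark~\ref{rem:solution} on $(R_0,\Rout)$ in general neither vanishes at $R_0$ nor matches $R_0\dot u^\star(R_0)=\Tin\Rin$.

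The genuine gap is the nonexistence claim when $\Tin\Rin<\Tout\Rout$ but the second inequality in \eqref{eq:hyp} fails. You leave it open, and no choice of competitors can close it, because in that regime $F$ is bounded below and has a unique minimizer. Let $v=Ar+B/r$ be the fully tensile Lam\'e solution with $\dot v(\Rin)=\Tin$ and $\dot v(\Rout)=\Tout$; this is precisely the expression in Remark~\ref{rem:solution}. Then
\[
A=\frac{\Tout\Rout^2-\Tin\Rin^2}{\Rout^2-\Rin^2}>0,\qquad B=-\frac{(\Tin-\Tout)\Rin^2\Rout^2}{\Rout^2-\Rin^2}<0,\qquad v(\Rin)=\frac{\Rin\bigl(2\Tout\Rout^2-\Tin(\Rin^2+\Rout^2)\bigr)}{\Rout^2-\Rin^2}.
\]
Hence $\dot v=A-B/r^2>0$, and $v(\Rin)\ge0$ exactly when $\Tin/\Tout\le 2\Rout^2/(\Rin^2+\Rout^2)$. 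In that case $v\ge0$ on $[\Rin,\Rout]$, so $(\dot v)_+=\dot v$ and $(v/r)_+=v/r$, and $v$ solves \eqref{EL+}. By your own convexity and uniqueness argument it is then the unique minimizer; a concrete instance is $\Rin=1$, $\Rout=2$, $\Tout=1$, $\Tin=3/2$. So when the second inequality fails, what breaks is property (ii), since there is no compressed region, not existence. The last sentence of the lemma is correct only for $\Tin\Rin>\Tout\Rout$, where your constant competitors do work, or if ``no solution'' is read as ``no solution with the structure (ii)''. You should replace the competitor argument by this explicit critical point. A minor point: in the case $\Tin\Rin=\Tout\Rout$ the minimizers are $\Tin\Rin\log r+c$ only for $c\le-\Tin\Rin\log\Rout$, i.e.\ the shifts keeping $v\le0$, not all additive constants.
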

\begin{remark}\label{rem:solution}
The solution to the Euler-Lagrange equation \eqref{EL+} is given by
\begin{equation}\label{ustar}
	u^\star(r)= \begin{cases} \Tin\Rin\log\left(\dfrac r{R_0}\right)&\text{in }(\Rin,R_0)\,,\\[1em]
		\dfrac{\Tin-\Tout}{\Rout^{-2}-\Rin^{-2}}\dfrac1r+ \dfrac{\Tout\Rout^2-\Tin\Rin^2}{\Rout^2-\Rin^2}r& \text{in }(R_0,\Rout)\,.
	\end{cases}
\end{equation}
Therefore, under the assumption \eqref{eq:hyp}, the relaxed problem admits a unique minimizer $u^\star$ explicitly given by \eqref{ustar}. 

We then set 
\begin{equation}\label{Boh}
	\mathcal{E}_0:=2\pi	\left({ \frac12}\int_{R_{\rm in}}^{R_{\rm out}} \Big((\dot u^\star)_+^2+ \Big(\frac{u^\star}{r}\Big)_+^2 \Big)r\di r+ T_{\rm in}R_{\rm in}u^\star(R_{\rm in})-
	T_{\rm out}R_{\rm out}u^\star(R_{\rm out})
	\right)\,.
\end{equation}

The corresponding relaxed configuration exhibits two qualitatively different regimes separated by the radius $R_0$. \\

Indeed, property $(iv)$ implies that
$$\dot u^\star>0\quad\text{ for every }r\in(\Rin,\Rout),$$
so the radial strain is always tensile. Physically, this reflects the fact that the applied loads stretch the sheet in the radial direction. On the other hand, by (ii) the azimuthal strain  $u^\star/r$ changes sign across the annulus. More precisely 
$$\frac{u^\star(r)}{r} <0\quad \text{ in }(\Rin,R_0),\quad\quad \frac{u^\star(r)}{r}>0\quad\text{ in }(R_0,\Rout).$$

Thus, the outer region is under azimuthal tension, while the inner region undergoes azimuthal compression. The latter is precisely the mechanism responsible for wrinkle formation in the full elastic problem. The radius $R_0$
therefore identifies the free boundary separating the wrinkled region from the purely tensile one.
\end{remark}

\subsection{Scaling of the energy and heuristics} \label{sec:scaling}
In this section we will perform several change of variables in the energy \eqref{energy1} which are convenient to our purposes. \\

\noindent\textbf{First change of variables: passage to polar coordinates.}
Let $$(u_r,u_\theta,\xi)\in W^{1,2}((\Rin,\Rout)\times[\pi,\pi);\R^2)\times W^{2,2}((\Rin,\Rout)\times[\pi,\pi))$$
be a displacement in polar coordinates, such that $(u_r,u_\theta,\xi)$ is $2\pi$-periodic in the $\theta$-variable. Then the functional 
\eqref{energy1} equals
\begin{equation}\label{energy-polar-coord}
	\begin{split}
		E_h(u_r,&u_\theta,\xi)={ \frac12}\int_{-\pi}^{\pi}\int_{R_{\rm in}}^{R_{\rm out}} \bigg[
		\Big(\partial_r u_r+\frac12(\partial_r\xi)^2\Big)^2 + \Big(\frac{u_r}{r}+\frac{\partial_\theta u_\theta}{r}+\frac{(\partial_\theta\xi)^2}{2r^2}\Big)^2\\
		&
		+ 2 \Big(\frac{\partial_\theta u_r}{2r}+\frac{\partial_r u_\theta}{2}-\frac{u_\theta}{2r}+\frac{\partial_r\xi\partial_\theta\xi}{2r}\Big)^2
		+h^2 \Big((\partial_{rr}\xi)^2+\frac{2(\partial_{r\theta}\xi)^2}{r^2}+\frac{(\partial_{\theta\theta}\xi)^2}{r^4}\Big)	\bigg]r\di r\di\theta\\
		& + T_{\rm in}R_{\rm in}\int_{-\pi}^{\pi}u_r(R_{\rm in},\theta)\di\theta- 
		T_{\rm out}R_{\rm out}\int_{-\pi}^{\pi}u_r(R_{\rm out},\theta)\di\theta\,.
	\end{split}
\end{equation}
Therefore the excess energy is
\begin{equation*}
	\begin{split}
	E_h(u_r,u_\theta,\xi)-\mathcal{E}_0&=
	{ \frac12}\int_{-\pi}^{\pi}\int_{R_{\rm in}}^{R_{\rm out}} \bigg[ 
	\dot u^\star\left(\partial_r\xi\right)^2+
	\Big(\partial_r u_r+\frac12(\partial_r\xi)^2-\dot u^\star\Big)^2\\
	& +\frac{(u^\star)_+}{r}\frac{(\partial_\theta\xi)^2}{r^2} + \Big(\frac{u_r}{r}+\frac{\partial_\theta u_\theta}{r}+\frac{(\partial_\theta\xi)^2}{2r^2}-\frac{(u^\star)_+}{r}\Big)^2\\
	&
	+ 2 \Big(\frac{\partial_\theta u_r}{2r}+\frac{\partial_r u_\theta}{2}-\frac{u_\theta}{2r}+\frac{\partial_r\xi\partial_\theta\xi}{2r}\Big)^2
	+h^2 \Big((\partial_{rr}\xi)^2+\frac{2(\partial_{r\theta}\xi)^2}{r^2}+\frac{(\partial_{\theta\theta}\xi)^2}{r^4}\Big)	\bigg]r\di r\di\theta\,.
	\end{split}
\end{equation*}
This follows by observing that:
\begin{itemize}
\item the first integrand on the right hand-side of \eqref{energy-polar-coord} can be rewritten as
\begin{equation*}
\frac12\Big(\partial_r u_r+\frac12(\partial_r\xi)^2\Big)^2=	\frac12\left[\dot u^\star\left(\partial_r\xi\right)^2+ \Big(\partial_r u_r+\frac12(\partial_r\xi)^2-\dot u^\star\Big)^2-(\dot u^\star)^2+ 2\partial_ru_r\dot u^\star\right]\,;
\end{equation*}
\item the second integrand on the right hand-side of \eqref{energy-polar-coord} can be rewritten as
\begin{equation*}
	\begin{split}
	\frac12	\Big(\frac{u_r}{r}+\frac{\partial_\theta u_\theta}{r}+\frac{(\partial_\theta\xi)^2}{2r^2}\Big)^2&=
\frac12\bigg[	\frac{(u^\star)_+}{r}\frac{(\partial_\theta\xi)^2}{r^2} + \Big(\frac{u_r}{r}+\frac{\partial_\theta u_\theta}{r}+\frac{(\partial_\theta\xi)^2}{2r^2}-\frac{(u^\star)_+}{r}\Big)^2\\
	& - \frac{(u^\star)^2_+}{r^2}+2 \frac{u_r}{r} \frac{(u^\star)_+}{r}+2\frac{\partial_\theta u_\theta}{r}\frac{(u^\star)_+}{r}\bigg]\,;
	\end{split}
\end{equation*}
\item for $\theta\in [-\pi,\pi)$ the integrand of the boundary term on the right hand-side of \eqref{energy-polar-coord} can be rewritten as
\begin{equation*}
	\begin{split}
		\Tin\Rin u_r(\Rin,\theta)- 	&	\Tout\Rout u_r(\Rout,\theta)=
		\dot u^\star(\Rin) \Rin u_r(\Rin,\theta)- 			\dot u^\star(\Rout) \Rout u_r(\Rout,\theta)\\
			&=  \int_{\Rin}^{\Rout}\partial_r \Big(u_rr\dot u^\star \Big)\di r= 
			 \int_{\Rin}^{\Rout} u_r\partial_r (r\dot u^\star )\di r +  \int_{\Rin}^{\Rout} \partial_ru_rr\dot u^\star \di r\\
			 & = \int_{\Rin}^{\Rout}\frac{u_r}{r}\frac{(u^\star)_+}{r}r\di r
			 +  \int_{\Rin}^{\Rout} \partial_ru_rr\dot u^\star \di r\,,
	\end{split}
\end{equation*}
where we used \eqref{EL+};
\item analogously for $\theta\in [-\pi,\pi)$ the integrand of the boundary term on the right hand-side of \eqref{Boh} satisfies
\begin{equation*}
	\Tin\Rin u^\star(\Rin)- 	\Tout\Rout u^\star(\Rout)= 
	\int_{\Rin}^{\Rout} \frac{(u^\star)_+^2}{r^2}r\di r+ \int_{\Rin}^{\Rout} (\dot u^\star)^2\di r\,:
\end{equation*}
\item since $u^\star$ is independent of $\theta$ it holds
\begin{equation*}
\int_0^{2\pi} \int_{\Rin}^{\Rout} \frac{\partial_\theta u_\theta}{r}\frac{(u^\star)_+}{r}r\di r\di \theta=0\,.
\end{equation*}
\end{itemize}

\noindent\textbf{Second change of variables: rescaling the wrinkle length-scale.}
{From~\cite{BeKo14+,BeKo16Lame} it is known that the excess energy $E_h-\mathcal E_0$
	 scales linearly in $h$. This naturally suggests working with
  the rescaled  excess energy $\frac{E_h-\mathcal E_0}{h}$. For $\Rin \le r\le R_0$ one expects that the sheet develops  out-of-plane wrinkles oscillations in the  $\theta$-direction.
 The linear-in-$h$ energy scaling implies that the bending term satisfies
 $h^2|\nabla^2 \xi|^2 \sim h$; in particular, its dominant component $\xi_{,\theta\theta}$ should be of order $h^{-1/2}$. Consequently, 
 the characteristic wavelength of the wrinkles in the bulk is expected to be of order $h^{1/2}$. Not surprisingly, this coincides with the scale employed in the upper-bound constructions of~\cite{BeKo14+,BeKo16Lame}.\\

  To analyse the limiting structure of the wrinkles as $h \to 0$, we therefore rescale the angular variable $\theta$ by the factor $L := h^{-1/2}$. Equivalently, this choice ensures that the characteristic wrinkle wavelength becomes of order one in the new angular variable. More precisely, after performing the change of variables
}
 
\begin{equation*}
	\hat u_r(r,\theta):=u_r(r,L^{-1}\theta), \quad \hat u_\theta(r,\theta):=L u_\theta(r,L^{-1}\theta),\quad \hat \xi(r,\theta):=L\xi(r,L^{-1}\theta),
\end{equation*}
The energy $E_h-\mathcal E_0$ becomes  the functional   $$\mathcal E_L\colon \mathcal{A}_L^{\rm in}\times	\mathcal{A}_L^{\rm out}\to[0,+\infty]\,,$$ 
where the function spaces describing admissible deformations have the form 
\begin{equation*}\label{def:in-plane}
	\mathcal{A}_L^{\rm in}:=\Big\{(u_r,u_\theta)\in W^{1,2}_{ \textrm{loc}}((\Rin,\Rout)\times\R;\R^2)\colon (u_r(r,\cdot),u_\theta(r,\cdot)) \text{ is $2\pi L$-periodic $\forall r\in(\Rin,\Rout)$}\Big\},
\end{equation*}
\begin{equation*}\label{def:out-of-plane}
	\mathcal{A}_L^{\rm out}:=\Big\{\xi\in W^{2,2}_{ \textrm{loc}}((\Rin,\Rout)\times\R)\colon \xi(r,\cdot) \text{ is $2\pi L$-periodic $\forall r\in(\Rin,\Rout)$}\Big\}\,,
\end{equation*} and
\begin{equation*}\label{eqn:RL}
	\begin{split}
	\mathcal E_L(u_r,u_\theta,\xi):= \pi\strokedint_{-\pi L}^{\pi L} \int_{\Rin}^{\Rout}
	\bigg[ &
\dot u^\star\frac{\left(\partial_r\xi\right)^2}{L^2}+
	\Big(\partial_r u_r+\frac12\frac{\left(\partial_r\xi\right)^2}{L^2}-\dot u^\star\Big)^2\\
	& +\frac{(u^\star)_+}{r}\frac{(\partial_\theta\xi)^2}{r^2} + \Big(\frac{u_r}{r}+\frac{\partial_\theta u_\theta}{r}+\frac{(\partial_\theta\xi)^2}{2r^2}-\frac{(u^\star)_+}{r}\Big)^2\\
	&
	+ 2 \Big(L\frac{\partial_\theta u_r}{2r}+\frac{\partial_r u_\theta}{2L}-\frac{u_\theta}{2rL}+\frac{\partial_r\xi\partial_\theta\xi}{2rL}\Big)^2\\
&	+\frac{1}{L^4} \Big(\frac{(\partial_{rr}\xi)^2}{L^2}+\frac{2(\partial_{r\theta}\xi)^2}{r^2}+L^2\frac{(\partial_{\theta\theta}\xi)^2}{r^4}\Big)	\bigg]r\di r\di\theta\,.
	\end{split}
\end{equation*}
Furthermore, $\frac{E_h - \mathcal{E}_0}{h}$ turns into 
$\mathcal{F}_L\colon	\mathcal{A}_L^{\rm in}\times	\mathcal{A}_L^{\rm out}\to\R$ defined as
\begin{equation}\label{def:F_L}\begin{split}
	\mathcal{F}_L(u_r,u_\theta,\xi)&:=L^2\mathcal E_L(u_r,u_\theta,\xi)\\
&=	 \pi\strokedint_{-\pi L}^{\pi L} \int_{\Rin}^{\Rout}
	\bigg[ 
	L^2
	\Big(\partial_r u_r+\frac12\frac{\left(\partial_r\xi\right)^2}{L^2}-\dot u^\star\Big)^2+L^2\frac{(u^\star)_+}{r}\frac{(\partial_\theta\xi)^2}{r^2} \\
	& + L^2\Big(\frac{u_r}{r}+\frac{\partial_\theta u_\theta}{r}+\frac{(\partial_\theta\xi)^2}{2r^2}-\frac{(u^\star)_+}{r}\Big)^2\\
	&
	+ 2 \Big(L^2\frac{\partial_\theta u_r}{2r}+\frac{\partial_r u_\theta}{2}-\frac{u_\theta}{2r}+\frac{\partial_r\xi\partial_\theta\xi}{2r}\Big)^2\\
	&	
	+ 	\Big(\dot u^\star{\left(\partial_r\xi\right)^2}
	+\frac{(\partial_{\theta\theta}\xi)^2}{r^4}\Big)\\
	&+\frac{1}{L^2} \Big(\frac{(\partial_{rr}\xi)^2}{L^2}+\frac{2(\partial_{r\theta}\xi)^2}{r^2}\Big)	
	\bigg]r
	\di r\di\theta
	\,.	\end{split}
\end{equation}
\noindent\textbf{Heuristics of the asymptotics.}
Before turning to the rigorous analysis, we briefly discuss the heuristic structure of the functional $\mathcal F_L$ and the expected limiting regime. Recall that the limit $h\to0$ corresponds to $L\to\infty$. Since most terms in the energy are quadratic and the relevant configurations oscillate on intervals whose length diverges with $L$, it is natural to analyse the problem in Fourier variables.

Assuming that the limit of $\mathcal F_L$ exists, and in particular that minimizing sequences remain uniformly bounded as $L\to\infty$, the first four terms in the energy must vanish asymptotically. The first term can be made small by choosing
\[
\partial_r u_r \sim \dot u^\star + o(L^{-1})
\]
and keeping $\partial_r\xi$ sufficiently small. The smallness of the second term reflects the fact that $(u^\star)_+=0$ in $(\Rin,R_0)$, while the out-of-plane displacement $\xi$, and therefore $\partial_\theta\xi$, is expected to vanish in $(R_0,\Rout)$, corresponding to the unwrinkled region.

Integrating the third term with respect to $\theta$ and using the periodicity of $(u_r,u_\theta)$ yields the constraint
\begin{equation}\label{constraint}
	\fint_{-\pi L}^{\pi L}
	(\partial_\theta\xi)^2
	\di\theta
	=
	-2ru^\star(r)
	+
	o(L^{-1})
	\qquad
	\text{in }(\Rin,R_0),
\end{equation}
which represents the excess arclength that must be accommodated through wrinkling. The fourth term can also be made negligible, whereas the last term is expected to vanish as $L\to\infty$.

By contrast, the remaining contribution --- corresponding to the competition between radial stretching and azimuthal bending generated by the wrinkles --- is of order one and therefore survives in the limit. This suggests that the limiting functional should arise as the $\Gamma$-limit of
\[
\pi
\strokedint_{-\pi L}^{\pi L}
\int_{\Rin}^{R_0}
\left(
\dot u^\star(\partial_r\xi)^2
+
\frac{(\partial_{\theta\theta}\xi)^2}{r^4}
\right)
r\di r\di\theta
\]
subject to the constraint \eqref{constraint}.

To obtain a formulation stable under the limit $L\to\infty$, it is convenient to work with the squares of the Fourier coefficients and with suitably defined measures as the primary objects of study. This choice is natural since the arclength constraint involves quadratic quantities, and therefore becomes linear when expressed in terms of the squared Fourier coefficients.
 In what follows, we denote by $k\in\R$ the Fourier variable associated with the $\theta$-direction; the same notation will also be used for the second variable in the measure-theoretic formulation.

Finally, let us remark that one could also include in the energy a term accounting for the presence of a substrate. Typically, such a contribution takes the form
\begin{equation*}\label{substrate}
	K_{\rm sub}\int_\Omega \xi^2\dx
	=
	K_{\rm sub}
	\int_{-\pi}^{\pi}
	\int_{\Rin}^{\Rout}
	\xi^2\,r\di r\di\theta
	=
	\frac1{L^2}
	\fint_{-\pi L}^{\pi L}
	\int_{\Rin}^{\Rout}
	\hat\xi^2\,r\di r\di\theta,
\end{equation*}
where $K_{\rm sub}$ is a nondimensional parameter measuring the relative stiffness of the substrate with respect to the film. After multiplication by the prefactor $L^2$, this contribution becomes of order one and therefore persists in the limit.
Since this term behaves trivially in the $\Gamma$-convergence analysis, we omit it from the present discussion. 
\section{Passage to measures $\Gamma$-convergence}\label{sec:main-result}  In this section we collect the main result of the paper Theorem \ref{theo:main_theo}. In order to do that, we need to pass to a measure-theoretic framework.\\
To each $\xi\in \mathcal{A}_L^{\rm out} $ we associate a measure $\mu^L\in \mathcal{M}_b^+((\Rin,\Rout)\times\R)$ defined through the Fourier coefficients of $\partial_\theta\xi$. 
 Let   $a_k(r)$ be the $k$-th Fourier coefficient in the $\theta$-variable for $k\in\frac{\mathbb Z}{L}$, that is
 \begin{equation*}\label{def:coef_a}
 	a_k(r):=
 	\begin{cases}\displaystyle
 		\sqrt 2	\strokedint_{-\pi L}^{\pi L} \xi(r,\theta) \sin(k\theta) \di\theta&k\in \dfrac{\mathbb{Z}}{L}, k>0 \,,\\[1em]
 		\displaystyle
 		\sqrt 2 \strokedint_{-\pi L}^{\pi L} \xi(r,\theta) \cos(k\theta) \di\theta&k\in \dfrac{\mathbb{Z}}{L}, k< 0\,,\\[1em]
 		\displaystyle
 		\strokedint_{-\pi L}^{\pi L}\xi(r,\theta)\di\theta&k=0\,.
 	\end{cases}
 \end{equation*}
Let also 
\begin{equation}\label{coef-squared}
 {a}(r,k):=k^2a_k^2(r)\quad\text{ for } (r,k)\in (\Rin,\Rout)\times\frac{\mathbb Z}{L}\,.
\end{equation}
 Then we have
\begin{equation*}\label{eq:fourier_repr}
	\begin{aligned}
		\xi(r,\theta)&=a_0(r)+ \sum_ {k\in \frac{\mathbb{Z}}{L}, k>0 } a_k(r)\sqrt2\sin(k\theta) + \sum_ {k\in \frac{ \mathbb{Z}}{L}, k < 0 } a_k(r)\sqrt2\cos(k\theta) \\
		&=a_0(r)+ \sum_ {k\in \frac{ \mathbb{Z}}{L}, k>0 } {\rm sign}(a_k(r))\frac{\sqrt{a(r,k)}}{k} \sqrt{2}\sin(k\theta) + \sum_ {k\in \frac{ \mathbb{Z}}{L}, k < 0 } {\rm sign}(a_k(r))\frac{\sqrt{a(r,k)}}{-k} \sqrt{2}\cos(k\theta)
		\,.
	\end{aligned}
\end{equation*}  
\begin{remark}[Plancherel's Identity]
From Plancherel equality we have
\begin{equation}\label{eq:plancherel-u}
	\fint_{-\pi L}^{\pi L}\xi^2\di\theta= a^2_0(r)+\sum_ {k\in \frac{\mathbb{Z}}{L},k\ne0}a^2_k(r)=a^2_0(r)+
	\sum_ {k\in \frac{ \mathbb{Z}}{L},k\ne0}\frac{{a}(r,k)}{k^2}\,.
\end{equation}
The same holds for partial derivatives of $\xi$, that is
\begin{equation}\label{plancherel-derivatives}
	\begin{split}
		\fint_{-\pi L}^{\pi L}(D^\alpha \xi)^2\di\theta&=
		(D^\alpha a_0(r))^2+
		\sum_ {k\in \frac{ \mathbb{Z}}{L},k\ne0}\Big(\frac{\di^{\alpha_1}}{\di r^{\alpha_1}} a_k(r)k^{\alpha_2}\Big)^2 \\
		&	= (D^\alpha a_0(r))^2+
		\sum_ {k\in \frac{ \mathbb{Z}}{L},k\ne0}\Big(\frac{\partial^{\alpha_1}}{\partial r^{\alpha_1}} \big(\sqrt{a(r,k)}\big)k^{\alpha_2-1}\Big)^2 
		\,, 
	\end{split}
\end{equation}
with $\alpha=(\alpha_1,\alpha_2)$ multi-index with $|\alpha|\le2$.  In case $\xi$ has higher regularity, i.e., $\xi\in W^{k,2}((\Rin,\Rout)\times\R)$  with $k>2$, then the same applies for the higher derivatives, i.e., for $|\alpha|\le k$.
For later convenience we also note that 
\begin{equation}\label{derivatives-a}
	\begin{split}
	{\partial_r} \big(\sqrt{a(r,k)}\big)= \frac{\partial_ra(r,k)}{2\sqrt{a(r,k)}}\,, \quad{\partial_{rr}} \big(\sqrt{a(r,k)}\big)=\frac{\partial_{rr}a(r,k)}{2\sqrt{a(r,k)}}- \frac{(\partial_ra(r,k))^2}{4\sqrt{a^3(r,k)}}\,.
	\end{split}
\end{equation}
\end{remark}

\begin{definition}[Measures $\mu^L$ and $\mu^L_{,r}$]\label{def:muL}
	Let $\xi\in \mathcal{A}_L^{\rm out}$.  We denote by $\mu^L(\xi)\in\mathcal{M}_b^+((\Rin,\Rout)\times\R)$ the measure given by 
	\begin{equation*}
		\mu^L(\xi):= \sum_{
			k\in \frac{\mathbb{Z}}{L}} a(r,k)  \L^1\res{(\Rin,\Rout)}\times \delta_k\,,
	\end{equation*}
with $a(r,k)$ defined as in \eqref{coef-squared}
Moreover we denote by $\mu^L_{,r}(\xi)$ the distributional $r$-derivative of $\mu^L(\xi)$.
	\end{definition}
\begin{remark}\label{rem:int-byparts}

\begin{enumerate}[label=$(\roman*)$]
		\item\label{rem-i} The distributional $r$-derivative of a measure $\mu\in\mathcal{M}_b^+(I\times\R)$ (with $I\subset\R$ interval) is defined as follows:
		for all $\varphi\in C^\infty_c(I\times\R)$ we have
		\begin{equation*}
			\langle\mu_{,r},\varphi\rangle:=-\int_{I\times\R}\varphi_{,r}\di\mu\,.
		\end{equation*}
		Moreover by a density argument $\mu_{,r}$ can be extended to functions $\varphi(r,k)=\phi(r) \chi_A(k)$ with $\phi\in C^\infty_c(I)$ and $A\subset\R$ bounded and measurable as 
		\begin{equation*}
			\langle\mu_{,r},\phi(r) \chi_A(k)\rangle:=-\int_{I\times A}\dot\phi(r)\di\mu\,;
		\end{equation*}
	If   $\mu_{,r}\ll\mu$ we denote by $\frac{\di \mu_{,r}}{\di\mu}$ the Radon-Nikodym derivative.
	\item\label{rem-ii} Let $\mu\in\mathcal{M}_b^+(I\times\R)$ be of the form
	\begin{equation*}
		\mu=\sum_{k\in K}a(r,k)\mathcal{L}^1\res I\times\delta_k\,,
	\end{equation*}
	with $K\subset\R$ countable and $a(\cdot,k)\in W^{1,1}(I)$ for all $k\in K$. Then 
	\begin{equation*}
	\mu_{,r}= \sum_{k\in K}\partial_ra(r,k)\mathcal{L}^1\res I\times\delta_k\,.
	\end{equation*}
	Moreover as
	$\partial_r a(\cdot,k)=0$ a.e. in $\{r\in I\colon {a}(r,k)=0\}$ it follows $\mu_{,r}\in \mathcal{M}(I\times\R)$, $\mu_{,r}\ll\mu$ and 
	\begin{equation*}
\frac{\di \mu_{,r}}{\di \mu}= \frac{\partial_ra(r,k)}{a(r,k)}\,.
	\end{equation*}
	\end{enumerate}
\end{remark}
\begin{definition}[Convergence] \label{def:convergence}
	For $L>0$ let $(u_r^L,u_\theta^L,\xi^L)\in\mathcal{A}_L^{\rm in}\times \mathcal{A}_L^{\rm out}$.
	We say a sequence $(u_r^L,u_\theta^L,\xi^L)$ converges, as $L\to\infty$, to $\mu\in\M_{b}^+((\Rin,\Rout)\times\R)$, if 
	$$(\mu^L(\xi^L),\mu^L_{,r}(\xi^L)) \text{ weakly-* converge  to } (\mu,\mu_{,r})\,.$$
\end{definition}

\noindent 
	We introduce the class of measures
	\begin{equation}\label{def:limit_measures}
		\begin{split}
			\mathcal{M}_\infty:=\bigg\{\mu\in &\M_b^+((\Rin,\Rout)\times\R) \colon\mu( [R_0,\Rout)\times\R)=0,\ \mu_{,r}\in\M_b((\Rin,\Rout)\times\R)\,, \\& \mu_{,r}\ll \mu\,,
			\int_{(\Rin,R_0)\times\R}{\phi(r)} \di\mu(r,k)=-	\int_{\Rin}^{R_0}	2r{u^\star(r)}\phi(r)\di r\quad \forall\phi\in C_c^{\infty}(\Rin,R_0)
			\bigg\}\,,
		\end{split}
	\end{equation}
and the functional $\mathcal F_\infty\colon \M_\infty \to[0,+\infty]$
	\begin{equation}\label{def:F_infty}
	\mathcal F_\infty(\mu)=\pi \int_{(\Rin,R_0)\times\R} \biggl[
	\frac{\dot u^\star(r)}{4k^2}\Bigl(\frac{\di \mu_{,r}}{\di\mu}(r,k)\Bigr)^2+	\frac{k^2 }{r^4} \biggr]r\di\mu(r,k)\,.
\end{equation}
We are now ready to state our main result.
\begin{theorem}
	\label{theo:main_theo}
	\black
	Let $\mathcal F_L$ and $\mathcal{F}_\infty$ be as in \eqref{def:F_L} and \eqref{def:F_infty} respectively.	Then the following holds:
	\begin{itemize}
		\item[$a)$] $($Compactness$)$. For $L>0$ let $(u_r^L,u_\theta^L,\xi^L)\in \mathcal{A}_L^{\rm in}\times \mathcal{A}_L^{\rm out}$ be such that $$\sup_L\mathcal F_L(u_r^L,u_\theta^L,\xi^L)<+\infty\,.$$ Then there exists a subsequence (not relabeled) and $\mu\in\mathcal M_\infty$ such that $(u_r^L,u_\theta^L,\xi^L)$ converges as $L\to +\infty$ in the sense of Definition~\ref{def:convergence} to $\mu$.
		\item[$b)$] $($$\Gamma$-convergence$)$. As $L\to+\infty$ the functionals $\mathcal F_L$ $\Gamma$-converge, with respect to the convergence in Definition~\ref{def:convergence}, to the functional $\mathcal F_\infty$.
	\end{itemize}
\end{theorem}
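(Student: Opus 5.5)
The proof splits into three parts: compactness together with identification of the constraint, the $\Gamma$-liminf inequality, and the $\Gamma$-limsup construction.

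\emph{Compactness and the constraint.} Assume $\sup_L\mathcal F_L\le C$. By Plancherel \eqref{plancherel-derivatives}, the bending term controls $\int\sum_k k^2 a(r,k)\,r\di r$. The term $\dot u^\star(\partial_r\xi)^2$, with $\dot u^\star\ge \Tin\Rin/\Rout>0$, controls $\int\sum_k (\partial_r\sqrt{a})^2/k^2\,\di r$. Hence
\[
|\mu^L_{,r}|=\sum_k 2\sqrt a\,|\partial_r\sqrt a| \le \sum_k \big(k^2 a+ (\partial_r\sqrt a)^2/k^2\big)
\]
is uniformly bounded. To bound the mass of $\mu^L$ itself, I average the third term over $\theta$. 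Periodicity kills $\partial_\theta u_\theta$, so $\fint(\partial_\theta\xi)^2/(2r^2)$ is $L^{-1}$-close in $L^2$ to $(u^\star)_+/r-\fint u_r/r$. Writing $\bar u_r:=\fint u_r\di\theta$, the first term forces $\partial_r\bar u_r\to\dot u^\star$. The boundary behaviour is also fixed: the $\theta$-average of \eqref{def:F_L} is, up to the coercive squares, the radial energy $F$ minus $\mathcal E_0$, whose uniqueness of minimizer (Lemma~\ref{lem:1dim-sol}) forces $\bar u_r\to u^\star$. Together these give $\mu^L((\Rin,\Rout)\times\R)=\int\fint(\partial_\theta\xi)^2\,\di r\to\int 2r(u^\star)_--\dots$, so the mass stays bounded. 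Weak-* compactness then yields $(\mu,\mu_{,r})$, and tightness in $k$ follows from the $k^2$-moment bound, so no mass escapes to infinity.

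The marginal constraint follows by testing with $\phi(r)\chi_{\R}(k)$, which is legitimate by tightness. The support condition $\mu([R_0,\Rout)\times\R)=0$ comes from the term $L^2 (u^\star)_+ r^{-3}(\partial_\theta\xi)^2$, which forces the mass of $\mu^L$ on $\{r>R_0+\delta\}$ to be $O(L^{-2})$ for every $\delta>0$. Finally, $\mu_{,r}\ll\mu$ follows from the liminf bound below: Reshetnyak / Benamou--Brenier lower semicontinuity of $\int |\mu_{,r}|^2/\mu$ is finite only if $\mu_{,r}\ll\mu$. I expect that identifying $\bar u_r\to u^\star$ (rather than merely up to constants) requires handling the boundary loads carefully, using \eqref{EL+} as in the computation of $E_h-\mathcal E_0$.

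\emph{Liminf.} Drop all nonnegative terms except $\dot u^\star(\partial_r\xi)^2+(\partial_{\theta\theta}\xi)^2r^{-4}$, and restrict to $(\Rin,R_0)$ since the remaining region gives a nonnegative contribution. By \eqref{plancherel-derivatives} and \eqref{derivatives-a}, this equals, up to the $k=0$ mode, which is nonnegative,
\[
\pi\int\Big[\frac{\dot u^\star}{4k^2}\Big(\frac{\di\mu^L_{,r}}{\di\mu^L}\Big)^2+\frac{k^2}{r^4}\Big]r\di\mu^L .
\]
The map $(\mu,\nu)\mapsto\int f(r,k,\di\nu/\di\mu)\di\mu$ with $f$ convex in the last variable and continuous in $(r,k)$ away from $k=0$ is jointly lower semicontinuous by Reshetnyak's theorem. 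Near $k=0$ the weight $1/k^2$ only helps, and I truncate it via $\min(1/k^2,M)$ and then let $M\to\infty$.

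\emph{Limsup, the main obstacle.} By density in energy I first reduce to measures $\mu=\sum_{j} a_j(r)\,\mathcal L^1\times\delta_{k_j}$. These are obtained by discretizing in $k$ on a grid adapted to $\mathbb Z/L_0$ via Jensen on cells, which decreases the energy up to errors vanishing with the mesh. I then extend the $a_j$ slightly past $R_0$, mollify at scale $\varepsilon(L)$, truncate small $k$, and renormalize so that the constraint holds exactly. Each $a_j$ is then mollified at scale $\varepsilon(L)$ (the quadratic coefficients, as announced) and I set
\[
\xi=\sum_j\sqrt{a_j}\,k_j^{-1}\sqrt2\sin(k_j\theta),
\]
placing the frequencies on $\mathbb Z/L$ near $k_j$. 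Next I construct $u_r=\bar u_r(r)+L^{-2}w(r,\theta)$ and $u_\theta$. Here $u_\theta$ solves $\partial_\theta u_\theta=-(\partial_\theta\xi)^2/(2r)+\fint(\partial_\theta\xi)^2/(2r)$, the mean defect is absorbed by choosing $\bar u_r=u^\star+O(L^{-2})$, and the correction $w$ is correlated with $u_\theta$ so that the shear term $L^2\partial_\theta u_r/(2r)+\partial_r u_\theta/2-u_\theta/(2r)+\dots$ is $o(1)$. The hardest bookkeeping is controlling $L^2(\partial_r u_r+\tfrac12(\partial_r\xi)^2/L^2-\dot u^\star)^2$ together with the higher derivatives near $r=R_0$, where $\beta$ degenerates as in Lemma~\ref{lem:1dim-sol}(v). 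To handle it I would choose $\varepsilon(L)\to0$ slowly and use the bounds on $\dot\beta,\ddot\beta$ so that all error terms are $O(L^{-1}\varepsilon^{-3})\to0$.
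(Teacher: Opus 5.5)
Your compactness and $\Gamma$-liminf arguments follow the paper's route: Plancherel, Young's inequality for $|\mu^L_{,r}|$, tightness from the $k^2$-moment, and Reshetnyak applied to the Benamou--Brenier form of $\mathcal F_\infty$. One simplification: you do not need uniqueness of the relaxed minimizer to pin the constant in $\bar u_r$, and $\mathcal F_L$ contains no boundary terms any more (they were absorbed via \eqref{EL+}). Since $(u^\star)_+>0$ on $(R_0+\beta,\Rout)$, the second term of $\mathcal F_L$ gives $\fint(\partial_\theta\xi^L)^2\di\theta=O(L^{-2})$ in $L^1(R_0+\beta,\Rout)$. The $\theta$-averaged third term then gives $\bar u_r=u^\star+O(L^{-1})$ there. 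The first term propagates $|\bar u_r-u^\star|\lesssim L^{-1}$ to all $r$.

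The genuine gap is the $\Gamma$-limsup at the free boundary $r=R_0$, which your error budget does not see. After mollification $A(R_0)>0$ and $\fint\hat\xi^2\di\theta>0$ at $R_0$. An exact renormalization $f=\sqrt{-ru^\star/A}$ therefore behaves like $\sqrt{(R_0-r)/A(R_0)}$. Hence $(\dot f)^2\fint\hat\xi^2\di\theta\sim(R_0-r)^{-1}$, so already the leading term $\int\dot u^\star(\partial_r\xi)^2$ diverges logarithmically. Worse, $\ddot f\,\hat\xi\sim(R_0-r)^{-3/2}$ puts $\partial_{rr}\xi$ outside $L^2$, so $\mathcal F_L=+\infty$. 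The bounds on $\dot\beta,\ddot\beta$ in Lemma~\ref{lem:1dim-sol} are not square integrable, so they cannot give errors $O(L^{-1}\eps^{-3})$.

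You need a boundary layer at $R_0$. The paper multiplies by a cut-off $\psi_\delta$ vanishing on $[R_0-\delta,\infty)$. This violates the constraint on $(R_0-2\delta,R_0)$ at cost $L^2\int_{R_0-2\delta}^{R_0}(u^\star/r)^2\di r\sim L^2\delta^3$ in the third term, which forces $\delta=o(L^{-2/3})$. The logarithm becomes $\log(\eps/\delta)$. It is beaten only because $\fint\hat\xi^2\di\theta\lesssim\max\{R_0-r,\eps\}\,(\omega(2M\eps)+Me^{-M})$ near $R_0$, where $M=\eps/\delta$ and $\omega$ is the (rate-free) modulus of the stretching energy of $\mu$ near $R_0$. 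This needs $M\eps\to0$, i.e.\ $\eps^2\ll\delta\ll L^{-2/3}$.

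The higher-order errors now carry negative powers of $\delta$ and a factor $L_0^4$ from the double $\theta$-antiderivatives over one period, e.g.\ $\frac{L_0^4}{L^2}\frac{1}{\delta^2\eps}$ in the first term. Together these leave only a narrow window: the paper takes $\delta=L^{-2/3}M^{-1/8}$, $\eps=L^{-2/3}M^{7/8}$, $L_0\sim M^{1/8}$, with $M(L)\to\infty$ tuned to $\omega$. So for a construction of this type $\eps$ cannot go to zero slowly (indeed $\eps=o(L^{-1/3})$). The layer analysis, not the bulk bookkeeping, is the missing idea.

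Two smaller points. First, make sure your density reduction keeps unboundedly many frequencies. If $\mu\in\mathcal M_\infty$ has bounded $k$-support, subadditivity of $(x,y)\mapsto x^2/y$ gives $\sum_j\frac{(\partial_r b_j)^2}{k_j^2 b_j}\ge\frac{2}{\max_j k_j^2}\,\frac{(\partial_r(ru^\star))^2}{-ru^\star}\sim(R_0-r)^{-1}$, hence $\mathcal F_\infty(\mu)=+\infty$. This is why frequency gaps down to the grid mesh occur, and why an intermediate period $2\pi L_0$ with $1\ll L_0\ll L$ is needed; with period $2\pi L$ the antiderivative factor would be $L^4/L^2$. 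Second, for your correction $w$ to be periodic, the $\theta$-mean of $r\partial_r u_\theta-u_\theta+\partial_r\xi\partial_\theta\xi$ must vanish. The paper enforces this by choosing the $r$-dependent part of $u_\theta$ through \eqref{eq:u-theta-1}.
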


\begin{remark}\label{rem:limit}
	\begin{enumerate}[label=$(\roman*)$]
\item Before giving the rigorous proof of Theorem \ref{theo:main_theo}, we outline the underlying the heuristics.
For $\xi\in\mathcal{A}_L^{\rm out}$ let $\mu^L=\mu^L(\xi)$ be defined as in definition~\ref{def:muL}. Recall that $$\sqrt{a(r,k)}=ka_k(r)\quad k\in \frac{\mathbb Z}L$$  corresponds to the $k$-th Fourier coefficient of $\partial_\theta\xi(r,\cdot)$.  From the heuristic discussion above, we expect $\xi$ to vanish on the interval $(R_0,\Rout)$ for  $L$ large, which in turn implies $\sqrt{a(r,k)}=0$ for each $k$ in this region,
and therefore  $\mu^L=0$		on $(R_0,\Rout)$.

Combining this observation with  Plancherel's identity \eqref{plancherel-derivatives} yields 
\begin{equation*}
	\sum_{
		k\in \frac{\mathbb{Z}}{L}} a(r,k) =\fint_{-\pi L}^{\pi L}(\partial_\theta\xi(r,\theta))^2\di\theta=-2ru^\star(r)+o(L^{-1})\quad\text{ for }r\in(\Rin,R_0)
	\,,
\end{equation*}
which, in the limit $L\to\infty$, leads precisely to the integral constraint in \eqref{def:limit_measures} satisfied by the limit measure.
Similarly using again \eqref{plancherel-derivatives}, \eqref{derivatives-a}, we compute
\begin{equation*}\begin{split}
	&	\pi\strokedint_{-\pi L}^{\pi L} \int_{\Rin}^{R_0}\Big(\dot u^\star{\left(\partial_r\xi\right)^2}
		+\frac{(\partial_{\theta\theta}\xi)^2}{r^4}\Big)r\di r\di\theta\\
&	= \pi\sum_{k \in \frac{ \mathbb Z}{L}} \int_{\Rin}^{R_0}\Big( \dot u^\star(r)\dot a_k^2(r) + \frac{a_k^2(r) k^4 }{r^4}\Big)r\di r\\
&= \pi\sum_{k \in \frac{ \mathbb Z}{L}} \int_{\Rin}^{R_0}\Big( \frac{\dot u^\star(r)}{4k^2}\frac{(\partial_ra(r,k))^2}{a(r,k)}
 + \frac{a(r,k) k^2 }{r^4}\Big)r\di r
= 	\mathcal F_\infty(\mu^L)
		\,,
	\end{split}
\end{equation*}
and this expression converges, as $L\to\infty$,  to the claimed $\Gamma$-limit.
	\item The constraint 
\begin{equation}\label{constr2}
	\int_{(\Rin,R_0)\times\R}\phi(r)\di\mu(r,k)=-	\int_{\Rin}^{R_0}	2{u^\star}(r){r}\phi(r)\di r\quad \forall\phi\in C_c^{\infty}(\Rin,R_0)
\end{equation}
may be interpreted as a condition on the marginal of the measure $\mu$ with respect to the radial variable. Moreover it
is equivalent to 
\begin{equation}\label{constr1}
	\int_{(\Rin,R_0)\times\R}\frac{\phi(r)}{2r^2} \di\mu(r,k)=-	\int_{\Rin}^{R_0}	\frac{u^\star(r)}{r}\phi(r)\di r\quad \forall\phi\in C_c^{\infty}(\Rin,R_0)\,,
\end{equation}
Indeed, assume \eqref{constr1} holds true and let $\phi$ be a test function. Then using that $\hat \phi=r^2\phi$ is still a test function we can prove \eqref{constr2}. The vice-versa can be proven analogously using $\tilde\phi=\frac\phi{r^2}$.
		\item\label{rem:limit(i)} When convenient we will identify the class $\mathcal M_\infty$ with the class of measures
		\begin{equation} 
			\begin{split}
				\bigg\{\mu\in \M_b^+&((\Rin,R_0)\times\R) \colon \mu_{,r}\in\M_b((\Rin,R_0)\times\R)\,, \ \mu_{,r}\ll \mu\,,
				\\& 	\int_{(\Rin,R_0)\times\R}\frac{\phi(r)}{2r^2}\di\mu(r,k)=-	\int_{\Rin}^{R_0}	\frac{u^\star(r)}{r}\phi(x)\di r\quad \forall\phi\in C_c^{\infty}(\Rin,R_0)
				\bigg\}\,;
			\end{split}
		\end{equation}
\item\label{rem:limit(ii)} The functional $\mathcal{F}_\infty$ admits the alternative representation
\begin{equation}\label{def:F_infty2}
	\mathcal F_\infty(\mu)=
	\pi
	\int_{(\Rin,R_0)\times\R}
	\frac{\dot u^\star(r)r}{4k^2}
	\Bigl(\frac{\di \mu}{\di|\tilde\mu|}\Bigr)^{-1}
	\Bigl(\frac{\di \mu_{,r}}{\di|\tilde\mu|}\Bigr)^2
	\di|\tilde \mu|
	+
	\pi
	\int_{(\Rin,R_0)\times\R}
	\frac{k^2}{r^3}\di\mu,
\end{equation}
where $\tilde \mu:=(\mu,\mu_{,r})$ and $|\tilde\mu|$ denotes its total variation. Indeed, since $\mu_{,r}\ll\mu\ll|\tilde\mu|$, we have
\[
\frac{\di\mu_{,r}}{\di|\tilde\mu|}
=
\frac{\di\mu_{,r}}{\di\mu}
\frac{\di\mu}{\di|\tilde\mu|},
\]
and therefore
\begin{equation*}
	\begin{split}
		\int_{(\Rin,R_0)\times\R}
		\frac{\dot u^\star(r)r}{4k^2}
		\Bigl(\frac{\di \mu_{,r}}{\di\mu}\Bigr)^2
		\di\mu
		&=
		\int_{(\Rin,R_0)\times\R}
		\frac{\dot u^\star(r)r}{4k^2}
		\Bigl(\frac{\di \mu}{\di|\tilde\mu|}\Bigr)^{-1}
		\Bigl(\frac{\di \mu_{,r}}{\di|\tilde\mu|}\Bigr)^2
		\di|\tilde\mu|.
	\end{split}
\end{equation*}
This representation highlights that the stretching contribution to the energy is a particular instance of the Benamou--Brenier functional arising in optimal transport theory; see \cite[Proposition~5.18]{Sa15}. In particular, it follows that the functional $\mathcal F_\infty$ is 1-homogeneous, convex and lower semicontinuous.
	\end{enumerate}
\end{remark}
The next two sections are devoted to the proof of Theorem~\ref{theo:main_theo}. For the reader's convenience, we divide the argument into three parts: the compactness result, the $\Gamma$-liminf inequality, both contained in Section~\ref{sec:compactness_lwb}, and the $\Gamma$-limsup inequality, proved in Section~\ref{sec:upb}.
\section{Compactness and lower bound}\label{sec:compactness_lwb}
We begin by establishing the compactness of sequences with uniformly bounded energy.
\begin{prop}[Compactness]\label{prop:compactness}
Let $L>0$ and let $(u_r^L,u_\theta^L,\xi^L)\in \mathcal{A}_L^{\rm in}\times \mathcal{A}_L^{\rm out}$ satisfy $$\sup_L\mathcal F_L( u_r^L,u_\theta^L,\xi^L)<+\infty\,.$$ Then, up to a  (not relabeled) subsequence, there exists  $\mu\in\mathcal M_\infty$ such that $(u_r^L,u_\theta^L,\xi^L)$ converges to $\mu$, as $L\to+\infty$, in the sense of Definition~\ref{def:convergence}.
\end{prop}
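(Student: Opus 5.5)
The plan is to extract from the bound $\sup_L\mathcal F_L\le C$ three pieces of information on $\mu^L:=\mu^L(\xi^L)$: uniform mass bounds, uniform total-variation bounds on $\mu^L_{,r}$ together with tightness in $k$, and a uniform bound on a Benamou--Brenier-type quantity. I would then pass to weak-* limits and verify the defining conditions of $\mathcal M_\infty$ one at a time.

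\textbf{Step 1: averaging in $\theta$.} Write $\bar u_r(r):=\fint_{-\pi L}^{\pi L}u^L_r\,\mathrm d\theta$. The map $\theta\mapsto u^L_\theta(r,\theta)$ is periodic, so $\fint\partial_\theta u_\theta^L\,\mathrm d\theta=0$. Averaging the third term of \eqref{def:F_L} with Jensen therefore gives
\[
\sum_{k}a(r,k)=\fint(\partial_\theta\xi^L)^2\,\mathrm d\theta=2r\big((u^\star)_+-\bar u_r\big)+g_L,\qquad \|g_L\|_{L^2}\lesssim L^{-1}.
\]
Averaging the first term of \eqref{def:F_L} similarly gives $\dot{\bar u}_r=\dot u^\star+h_L$, up to a nonnegative contribution $\tfrac1{2L^2}\fint(\partial_r\xi^L)^2$. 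That contribution is $O(L^{-2})$ in $L^1$ by the stretching term, since $\dot u^\star\ge \Tin\Rin/\Rout>0$ by Lemma~\ref{lem:1dim-sol}(iv). Hence $\bar u_r=u^\star+c_L+o(1)$ in $L^1$ for some constants $c_L$.

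To pin down $c_L$, I use the term $L^2(u^\star)_+(\partial_\theta\xi)^2/r^3$. It forces $\fint(\partial_\theta\xi^L)^2\to0$ in $L^1$ on $(R_0+\delta,\Rout)$ for every $\delta>0$. Inserting this into the averaged identity there yields $c_L\to0$. Consequently $\sum_k a(\cdot,k)\to -2ru^\star\chi_{(\Rin,R_0)}$ in $L^1(\Rin,\Rout)$. In particular $\sup_L\mu^L((\Rin,\Rout)\times\R)<\infty$.

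\textbf{Step 2: derivative bounds and tightness.} Recall $\partial_ra(r,k)=2k^2a_k\dot a_k$. By Young's inequality, $|\partial_r a|\le k^4a_k^2+\dot a_k^2$. Summing over $k$ and using Plancherel \eqref{plancherel-derivatives} gives
\[
|\mu^L_{,r}|\big((\Rin,\Rout)\times\R\big)\lesssim \fint\int\Big(\dot u^\star(\partial_r\xi)^2+\tfrac{(\partial_{\theta\theta}\xi)^2}{r^4}\Big)r\,\mathrm dr\,\mathrm d\theta\lesssim C.
\]
For tightness, $\int k^2\,\mathrm d\mu^L=\sum_k\int k^4a_k^2\lesssim C$ by the bending term, so no mass escapes to $|k|=\infty$. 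Up to a subsequence, $\mu^L\overset{*}{\rightharpoonup}\mu$ and $\mu^L_{,r}\overset{*}{\rightharpoonup}\nu$. Testing against $\varphi_{,r}$ shows $\nu=\mu_{,r}$, which is the convergence of Definition~\ref{def:convergence}.

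\textbf{Step 3: properties of the limit.} By tightness and the $L^1$ convergence of Step 1, the $r$-marginal of $\mu$ is $-2ru^\star\chi_{(\Rin,R_0)}\mathcal L^1$. This gives at once the constraint in \eqref{def:limit_measures} and $\mu([R_0,\Rout)\times\R)=0$; the marginal is absolutely continuous, so there is no atom at $R_0$.

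For $\mu_{,r}\ll\mu$, I use the identity
\[
\sum_k\int\frac{(\partial_r a)^2}{4k^2a}\,r\,\mathrm dr=\sum_k\int\dot a_k^2\,r\,\mathrm dr\lesssim C.
\]
The functional $(\mu,\nu)\mapsto\int \frac{1}{k^2}\big(\frac{\mathrm d\nu}{\mathrm d|\tilde\mu|}\big)^2\big(\frac{\mathrm d\mu}{\mathrm d|\tilde\mu|}\big)^{-1}\mathrm d|\tilde\mu|$, in the representation of Remark~\ref{rem:limit}\ref{rem:limit(ii)}, is jointly convex, $1$-homogeneous and lower semicontinuous. By Reshetnyak it stays finite at $(\mu,\mu_{,r})$. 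Finiteness forces $\mu_{,r}$ to vanish wherever $\mathrm d\mu/\mathrm d|\tilde\mu|=0$, that is $\mu_{,r}\ll\mu$. Near $k=0$ the weight $1/k^2$ only helps.

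\textbf{Main obstacle.} The delicate step is Step 1. It must control the non-$\theta$-periodic quantity $\bar u_r$ only through its derivative, and show that the constant offset $c_L$ vanishes using the degenerate weight $(u^\star)_+$, which tends to $0$ at $R_0$. I would handle this by working on $(R_0+\delta,\Rout)$ first and then letting $\delta\to0$, using the uniform mass bound to exclude concentration at $R_0$.
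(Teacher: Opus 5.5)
Your proposal is correct and follows essentially the paper's route. You control $\fint u_r^L\di\theta-u^\star$ through the first and third terms of $\mathcal F_L$ and pin the additive constant on $(R_0+\delta,\Rout)$ via the $(u^\star)_+$ term; you bound $|\mu^L_{,r}|$ by Young's inequality against the stretching and bending terms, get tightness in $k$ from $\int k^2\di\mu^L\lesssim 1$, and deduce $\mu_{,r}\ll\mu$ from lower semicontinuity of the Benamou--Brenier-type functional (the paper truncates to $|k|<N$ and uses the Ambrosio--Fusco--Pallara example, which is the same mechanism). The only difference is organizational: since $\fint u_r^L\di\theta-u^\star$ is uniformly close to $c_L\to0$ on all of $(\Rin,\Rout)$, you get $L^1$-convergence of the marginal density and read off the support condition and the constraint at once, whereas the paper separately estimates $\mu^L((R_0-\delta,\Rout)\times\R)\le C/L+C\delta$; in particular, the concentration at $R_0$ that you list as the main obstacle never actually arises.
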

\begin{proof}
 Let 
$\mu^L:=\mu^L(\xi^L)$
and $\mu^L_{,r}:=\mu^L_{,r}(\xi^L)$ be defined  as in Definition \ref{def:muL}. Then there exist functions ${a}^L(r,k)$ such that $a(\cdot,k)\in W^{1,1}(\Rin,\Rout)$ and
\begin{equation*}
	\mu^L= \sum_{
		k\in \frac{ \mathbb{Z}}{L}}  a^L(r,k) \L^1\res{(\Rin,\Rout)}\times \delta_k\,,
	\quad
	\mu^L_{,r}= \sum_{
		k\in \frac{\mathbb{Z}}{L}}  a_{,r}^L(r,k)  \L^1\res{(\Rin,\Rout)}\times\delta_k\,.
\end{equation*} 
\textit{Step 1:}  
we show that there exists $\mu\in \mathcal{M}_b^+((\Rin,\Rout)\times\R)$ with $\mu_{,r}\in \mathcal{M}_b((\Rin,\Rout)\times\R)$ and such that $(\mu^L,\mu^L_{,r})\stackrel{*}{\rightharpoonup}(\mu,\mu_{,r})$.

Since $$0<C_0:=\sup_L\mathcal{F}_L(u_r^L,u_\theta^L,\xi^L)<+\infty\,,$$ we obtain the energy bound
\begin{equation}\label{energy-bound}
	\begin{split}
	C_0\ge
		\pi\strokedint_{-\pi L}^{\pi L} \int_{\Rin}^{\Rout}
		\bigg[ &
		L^2
		\Big(\partial_r u^L_r+\frac12\frac{\left(\partial_r\xi^L\right)^2}{L^2}-\dot u^\star\Big)^2+L^2\frac{(u^\star)_+}{r}\frac{(\partial_\theta\xi^L)^2}{r^2} \\
		& + L^2\Big(\frac{u_r^L}{r}+\frac{\partial_\theta u^L_\theta}{r}+\frac{(\partial_\theta\xi^L)^2}{2r^2}-\frac{(u^\star)_+}{r}\Big)^2\\
		&
		+ 2 \Big(L^2\frac{\partial_\theta u^L_r}{2r}+\frac{\partial_r u^L_\theta}{2}-\frac{u^L_\theta}{2r}+\frac{\partial_r\xi^L\partial_\theta\xi^L}{2r}\Big)^2\\
		&	
		+ 	\Big(\dot u^\star{\left(\partial_r\xi^L\right)^2}
		+\frac{(\partial_{\theta\theta}\xi^L)^2}{r^4}\Big)\\
		&+\frac{1}{L^2} \Big(\frac{(\partial_{rr}\xi^L)^2}{L^2}+\frac{2(\partial_{r\theta}\xi^L)^2}{r^2}\Big)	
		\bigg]r
		\di r\di\theta
		\,.
	\end{split}
\end{equation}
From Lemma \ref{min-radial-relaxed-pb} we have that $$u^\star<0 \text{ in }(\Rin,R_0),\quad u^\star>0 \text{ in }(R_0,\Rout), \quad \dot u^\star\ge c>0\,.$$
This combined with the bound of the second term on the right-hand-side of
\eqref{energy-bound} yields
\begin{equation}\label{xi-theta-outer-part}
\strokedint_{-\pi L}^{\pi L} \int_{R_0+\beta}^{\Rout}  (\partial_\theta\xi^L)^2 \di r\di\theta\lesssim\frac 1{L^2}\,,
\end{equation}
for $0<\beta<(\Rout-R_0)/2$ fixed. 
Next we show that 
\begin{equation}\label{closeness-ur-ustar}
\left|
\strokedint_{-\pi L}^{\pi L} (u^\star(r)-u_r^L(r,\theta))\di\theta
\right|\le \frac CL\quad\text{ for all } r\in (\Rin,\Rout)\,.
\end{equation}
To this purpose set 
\begin{equation*}
	v^L(r,\theta):=u^\star(r)-u_r^L(r,\theta)\quad \text{ and }\quad
V^L(r):=\strokedint_{-\pi L}^{\pi L}v^L(r,\theta)\di\theta\,.
\end{equation*}
From \eqref{energy-bound} we deduce that 
\begin{equation*}\label{energy-bound-2}
	\begin{split}
 \strokedint_{-\pi L}^{\pi L}\int_{R_0+\beta}^{\Rout} \Big(\frac{v^L}{r}+\frac{\partial_\theta u^L_\theta}{r}+\frac{(\partial_\theta\xi^L)^2}{2r^2}\Big)^2r\di r\di \theta&\lesssim \frac1{L^2} \,, \\[1em]
 \strokedint_{-\pi L}^{\pi L}\int_{\Rin}^{\Rout}\Big(\partial_rv^L+\frac12\frac{\left(\partial_r\xi\right)^2}{L^2}\Big)^2r\di r\di\theta& \lesssim\frac1{L^2}\,.
	\end{split}
\end{equation*}
This together with \eqref{xi-theta-outer-part} and Hölder's inequality imply 
\begin{equation*}
\begin{split}
\left|\int_{R_0+\beta}^{\Rout}V^L(r)\di r\right| 
&=  \left|\strokedint_{-\pi L}^{\pi L} \int_{R_0+\beta}^{\Rout}\frac{v^L}{r}r\di r\di\theta
\right|\\
& \le \left| \strokedint_{-\pi L}^{\pi L}\int_{R_0+\beta}^{\Rout} \Big(\frac{v^L}{r}-\frac{\partial_\theta u^L_\theta}{r}-\frac{(\partial_\theta\xi^L)^2}{2r^2}\Big)r\di r\di \theta\right|\\&+ 
\strokedint_{-\pi L}^{\pi L}\int_{R_0+\beta}^{\Rout} \frac{(\partial_\theta\xi^L)^2}{2r^2}r\di r\di \theta\lesssim \frac 1L\,.
\end{split}
\end{equation*}
Then by the mean value theorem there exists $\tilde r=\tilde r(L)\in (R_0+\beta,\Rout)$ such that
\begin{equation}\label{mean-value-thm}
	|V^L(\tilde r)| \lesssim \frac 1L\,.
\end{equation}
In a similar manner we get for any $\Rin<r_0<r_1<\Rout$
\begin{equation}
	\begin{split}\label{estimate-F-L}
|V^L(r_1)-V^L(r_0)|&= \left|\strokedint_{-\pi L}^{\pi L}\int_{r_0}^{r_1} \partial_rv^L\di r\di \theta
\right|\\
&\lesssim \left|
 \strokedint_{-\pi L}^{\pi L}\int_{\Rin}^{\Rout}\Big(\partial_rv^L+\frac12\frac{\left(\partial_r\xi^L\right)^2}{L^2}\Big)r\di r\di\theta 
\right|\\&+
\left|
 \strokedint_{-\pi L}^{\pi L}\int_{\Rin}^{\Rout}\frac12\frac{\left(\partial_r\xi^L\right)^2}{L^2}r\di r\di\theta 
\right|\lesssim \frac 1L\,.
	\end{split}
\end{equation}
Therefore combining \eqref{mean-value-thm} and \eqref{estimate-F-L} we find for all $r\in (\Rin,\Rout)$
\begin{equation*}
\begin{split}
\left|
\strokedint_{-\pi L}^{\pi L} (u^\star(r)-u_r^L(r,\theta))\di\theta
\right|=
|V^L(r)|\le |V^L(r)-V^L(\tilde r)|+ |V^L(\tilde r)|\lesssim \frac 1L\,,
\end{split}
\end{equation*}
and \eqref{closeness-ur-ustar} is proven.
Then by Hölder inequality, \eqref{energy-bound} and \eqref{closeness-ur-ustar} we find
\begin{equation}\label{xi-theta-inner}
	\begin{split}
		\strokedint_{-\pi L}^{\pi L}\int_{\Rin}^{R_0} \frac{(\partial_\theta\xi^L)^2}{2r^2}r\di r\di\theta
		&\le
		\strokedint_{-\pi L}^{\pi L}\int_{\Rin}^{R_0}
		\Big|\frac{u_r^L}{r}+\frac{\partial_\theta u_\theta^L}{r}+\frac{(\partial_\theta\xi^L)^2}{2r^2}\Big|r\di r\di\theta\\
		& + \int_{\Rin}^{R_0} \left|
		\strokedint_{-\pi L}^{\pi L} (u^\star(r)-u_r^L(r,\theta))\di\theta
		\right|	\di r\\ &
		- \int_{\Rin}^{R_0}
		{u^\star}\di r\\&\lesssim \frac{1}{L}+1\lesssim 1\,,
	\end{split}
\end{equation}
and
\begin{equation}\label{xi-theta-outer}
\begin{split}
	\strokedint_{-\pi L}^{\pi L}\int_{R_0}^{\Rout} \frac{(\partial_\theta\xi^L)^2}{2r^2}r\di r\di\theta& \le 
		\strokedint_{-\pi L}^{\pi L}\int_{\Rin}^{R_0}
	\Big|\frac{u_r^L}{r}+\frac{\partial_\theta u_\theta^L}{r}+\frac{(\partial_\theta\xi^L)^2}{2r^2}-\frac{u^\star}{r}\Big|r\di r\di\theta\\
	&
	+ \strokedint_{-\pi L}^{\pi L}\int_{R_0}^{\Rout}
	\Big|\frac{u^\star}{r}-\frac{u_r^L}{r}\Big|r\di r\di\theta\lesssim \frac 1L\,.
\end{split}
\end{equation}
Hence by \eqref{plancherel-derivatives} and the two above estimates we infer 
\begin{equation*}
	\begin{split}
		|\mu^L|((\Rin,\Rout)\times\R)&=
			\mu^L((\Rin,\Rout)\times\R)\\
			&=
		\int_{\Rin}^{\Rout}
		 \sum_{k\in\frac{\mathbb Z}{L}}a^L(r,k)\di r=
		 \strokedint_{-\pi L}^{\pi L}\int_{\Rin}^{\Rout} {(\partial_\theta\xi^L)^2}\di r\di\theta\\
		 &\lesssim
		\strokedint_{-\pi L}^{\pi L}\int_{\Rin}^{\Rout} \frac{(\partial_\theta\xi^L)^2}{2r^2}r\di r\di\theta\lesssim 1\,.
	\end{split}
\end{equation*}
\black
As a consequence, there exists  a (not relabeled) subsequence  and $\mu\in\mathcal{M}^+_b((\Rin,\Rout)\times\R)$ such that 
$\mu^L\stackrel{	*}{\rightharpoonup}\mu$.
Combining \eqref{energy-bound}  with \eqref{plancherel-derivatives} and \eqref{derivatives-a} we find
\begin{equation}\label{eq:compactness}
	\begin{split}
		C_0&\ge  \pi\strokedint_{-\pi L}^{\pi L}\int_{\Rin}^{\Rout}
		\Big(\dot u^\star{\left(\partial_r\xi^L\right)^2}
		+\frac{(\partial_{\theta\theta}\xi^L)^2}{r^4}\Big) r\di r\di\theta
	\\
		& \ge \pi\int_{\Rin}^{\Rout} \Bigg(
		\sum_ {k\in \frac{ \mathbb{Z}}{L},k\ne0}\frac{\dot u^\star}{4k^2}\frac{(\partial_r{a}^L(r,k))^2}{a^L(r,k)}+
		 \sum_ {k\in \frac{\mathbb{Z}}{L}} \frac{a^L(r,k)k^2}{r^4}\Bigg)r\di r\\
		&\ge \pi \int_{\Rin}^{\Rout} \frac{(\dot u^\star)^{1/2}}{r}  \sum_ {k\in \frac{ \mathbb{Z}}{L}}
		|\partial_r{a}^L(r,k) |
		\di r\ge C|\mu^L_{,r}|((\Rin,\Rout)\times\R)\,,
	\end{split}
\end{equation}
where the last inequality follows by Young's inequality. 
 Hence, up to subsequence, we may deduce that there exists $\tilde\mu\in\mathcal{M}_b((\Rin,\Rout)\times\R)$ such that $\mu^L_{,r}\stackrel{*}{\rightharpoonup}\tilde\mu$. Moreover given any $\varphi\in C^\infty_c((\Rin,\Rout)\times\R)$, it holds
\begin{equation*}\begin{split}
		\int_{(\Rin,\Rout)\times\R}\varphi \di\tilde\mu&=\lim_{L\to+\infty}
	\int_{(\Rin,\Rout)\times\R}\varphi \di\mu^L_{,r}\\
	&= - \lim_{L\to+\infty} \int_{(\Rin,\Rout)\times\R}\partial_r \varphi \di\mu^L=-\int_{(\Rin,\Rout)\times\R}\partial_r\varphi\di\mu
	\end{split}
\end{equation*}
which in turn implies $\tilde\mu=\mu_{,r}$. \\

\noindent
\textit{Step 2:} we show that $\mu_{,r}\ll\mu$.  By Remark \ref{rem:int-byparts} \ref{rem-ii} we have that $\mu^L_{,r}\ll\mu^L$. Let $N\in\mathbb N$ be fixed and define the restricted measures $$\mu^L_N:=\mu^L\res(\Rin,\Rout)\times(-N,N)\quad \text{ and } \quad\mu_N:=\mu\res(\Rin,\Rout)\times(-N,N)\,.$$ Then we have that
\begin{equation*}
\mu^L_{N,r}:=\mu_{,r}^L\res(\Rin,\Rout)\times(-N,N)\,, \quad \mu_{N,r}:=\mu_{,r}\res(\Rin,\Rout)\times(-N,N)\,,
\end{equation*}
\begin{equation}\label{eq:abs}
\mu^L_{N,r}\ll\mu^L_{N}\,,\quad (\mu^L_N,\mu^L_{N,r} )  \stackrel{*}{\rightharpoonup}(\mu_N,\mu_{N,r})\,,
\end{equation}
and 
\begin{equation*}
\frac{\di\mu^L_{N,r}}{\di\mu^L_N}(r,k)=\frac{\di\mu^L_{,r}}{\di\mu^L}(r,k)\res(\Rin,\Rout)\times(-N,N)
\,.
\end{equation*}
Hence from the definition of $\mu^L$ and \eqref{eq:compactness} we have 
\begin{equation}\label{eq:bound}
	\begin{split}
	\int_{(\Rin,\Rout)\times(-N,N)}\frac1{4N^2} \Big(\frac{\di\mu^L_{N,r}}{\di\mu^L_N}(r,k)\Big)^2\di\mu^L_N&\le 
\int_{(\Rin,\Rout)\times(-N,N)}\frac1{4k^2} \Big(\frac{\di\mu^L_{,r}}{\di\mu^L}(r,k)\Big)^2\di\mu^L\\
&\le \int_{\Rin}^{\Rout} \sum_ {k\in \frac{ \mathbb{Z}}{L},k\ne0} \frac{1}{4k^2}\frac{(\partial_r{a}^L(r,k))^2}{ a^L(r,k)}\di r\le C\,.
	\end{split}
\end{equation}
From \eqref{eq:abs}, \eqref{eq:bound} and \cite[Example 2.36 pg. 67, and discussion at pg. 66]{AmFuPa:00} we deduce that $\mu_{N,r}\ll\mu_N$ for every $N\in\mathbb N$ and hence $\mu_{,r}\ll\mu$.\\

\noindent
\textit{Step 3:} we show that $\mu\in\mathcal{M}^+_b((\Rin,R_0)\times\R)$, that is, $\mu ((R_0,\Rout]\times\R)=0$, and that 
\begin{equation}\label{eq:constr}
	\int_{(\Rin,R_0)\times\R}\frac1{2r^2}\phi(r)\di\mu=-\int_{\Rin}^{R_0}\frac{u^\star}{r}\phi(r)\di r,
\end{equation}
for all $\phi\in C_c^{\infty}((\Rin,\Rout))$.
For any fixed $0<\delta<R_0/2$ by \eqref{energy-bound} we have 
\begin{equation*}
	\begin{split}
		 \mu^L((R_0-\delta,\Rout)\times\R)&=	\int_{R_0-\delta}^{\Rout}\sum_{k\in \frac{ \mathbb{Z}}{L}} a^L(r,k)\di r\\
		 &
		 = \strokedint_{-\pi L}^{\pi L}\int_{R_0-\delta}^{\Rout}
		 (\partial_\theta\xi^L)^2 \di r\di\theta
		 \\
&	= \strokedint_{-\pi L}^{\pi L}\int_{R_0}^{\Rout}
(\partial_\theta\xi^L)^2 \di r\di\theta
+\strokedint_{-\pi L}^{\pi L}\int_{R_0-\delta}^{R_0}
(\partial_\theta\xi^L)^2 \di r\di\theta
\\
&\le \frac{C}{L}+C\delta\,,
	\end{split}
\end{equation*}
where the last inequality follows by estimating the two terms as in \eqref{xi-theta-outer} and \eqref{xi-theta-inner} respectively and using that $-u^\star\le C$ in $(R_0-\delta,R_0)$ for some $C>0$.
Now using the lower semicontinuity with respect to the weak* convergence  we deduce
\begin{equation*}
	 \mu((R_0,\Rout]\times\R)\le  \mu((R_0-\delta,\Rout)\times\R)\le 
	\liminf_{L\to \infty}\mu^L((R_0-\delta,\Rout)\times\R)\le C\delta\,.
\end{equation*}
By sending $\delta\to0$ we deduce
$
 \mu((R_0,\Rout]\times\R)=0.
$
It remains to show \eqref{eq:constr}. Given $\phi\in C_c^\infty(\Rin,R_0)$ it holds
\begin{equation*}
	\begin{split}
\int_{(\Rin,R_0)\times\R}\frac{\phi(r)}{2r^2}\di\mu^L&= \int_{\Rin}^{R_0}\phi(r)\Big(\frac1{2r^2}\sum_{k\in \frac{ \mathbb{Z}}{L}} a^L(r,k)+\frac{u^\star}{r}\Big)\di r- \int_{\Rin}^{R_0}\frac{u^\star}{r}\phi(r)\di r.\\
	\end{split}
\end{equation*}
From \eqref{plancherel-derivatives}, \eqref{energy-bound} and \eqref{closeness-ur-ustar} it follows that
\begin{equation*}
	\begin{split}
&	\int_{\Rin}^{R_0}|\phi(r)|\Big|\frac1{2r^2}\sum_{k\in \frac{ \mathbb{Z}}{L}} a^L(r,k)+\frac{u^\star}{r}\Big|\di r
\lesssim\|\phi\|_\infty \int_{\Rin}^{R_0}
\Big| \strokedint_{-\pi L}^{\pi L}\frac{(\partial_r\xi^L)^2}{2r^2}\di\theta
+\frac{u^\star}{r}\Big|r\di r
\\
& \lesssim \|\phi\|_\infty \int_{\Rin}^{R_0}
\strokedint_{-\pi L}^{\pi L}  
\Big|\frac{u_r^L}{r}+\frac{\partial_\theta u_\theta^L}{2}+
\frac{(\partial_r\xi^L)^2}{2r^2}\Big|r\di\theta
\di r
+\|\phi\|_\infty \int_{\Rin}^{R_0} \Big|
\strokedint_{-\pi L}^{\pi L} ( {u^\star}-{u_r^L}) \di\theta\Big|\di r
\\
	&
\le\frac{C}{L}\to0, 
	\end{split}
\end{equation*}
as $L\to+\infty$, so that 
\begin{equation}\label{eq:constr1}
\lim_{L\to +\infty} \int_{(\Rin,R_0)\times\R}\frac{\phi(r)}{2r^2}
\di\mu^L= -  \int_{\Rin}^{R_0}\frac{u^\star}{r}\phi(r)\di r\,.
\end{equation}
Next we fix $K\ge1$ and choose $\psi_K\in C^\infty_c(\R)$ such that  $0\le \psi_K\le1$, $\psi_K(k)\equiv 1$ if $|k|<K$ and $\psi_K(k)\equiv 0$ if $|k|>K+1$. Thus
\begin{equation}\label{eq:cutoff}
	\int_{(\Rin,R_0)\times\R}\frac{\phi(r)}{2r^2}
	 \di\mu^L=
	\int_{(\Rin,R_0)\times\R}\frac{\phi(r)}{2r^2}
\psi_K(k)\di\mu^L+ 
		\int_{(\Rin,R_0)\times\R}\frac{	\phi(r)}{2r^2}
(1-\psi_K(k))\di\mu^L\,.
\end{equation}
The weak* convergence implies
\begin{equation*}
\lim_{L\to +\infty}		\int_{(\Rin,R_0)\times\R}\frac{\phi(r)}{2r^2}
\psi_K(k)\di\mu^L=		\int_{(\Rin,R_0)\times\R}\frac{\phi(r)}{2r^2}
\psi_K(k)\di\mu\,,
\end{equation*}
whereas for the second term on the right hand-side of \eqref{eq:cutoff} we estimate
\begin{equation*}
	\begin{split}
		\int_{(\Rin,R_0)\times\R}\frac1{2r^2}
		|\phi(r)(1-\psi_K(k))|\di\mu^L
		&\lesssim\int_{\Rin}^{R_0}\frac1{2r^2}|\phi(r)|\Big(\sum_{k\in \frac{ \mathbb{Z}}{L},|k|>K} a^L(r,k)
	\Big)\di r\\
	&\le\frac{\|\phi\|_\infty}{K^2}
	\int_{\Rin}^{R_0}\frac1{2r^2}\Big(\sum_{k\in \frac{ \mathbb{Z}}{L},|k|>K} a^L(r,k)k^2
	\Big)\di r\\
	&\le \frac{\|\phi\|_\infty}{K^2}
	\int_{\Rin}^{R_0}\fint_{-\pi L}^{\pi L} \frac{(\partial_{\theta\theta}\xi^L)^2}{r^4}r
	\di\theta\di r\le \frac{C}{K^2}\,,
	\end{split}
\end{equation*}
 where the last two inequalities follow from \eqref{plancherel-derivatives} and \eqref{energy-bound}.
We now pass to the limit as $L\to+\infty$ in \eqref{eq:cutoff} and get
\begin{equation*}
	\begin{split}
	&	\int_{(\Rin,R_0)\times\R}\frac{\phi(r)}{2r^2}\psi_K(k)\di\mu-\frac{C}{K^2}\le
	\lim_{L\to+\infty}	\int_{(\Rin,R_0)\times\R}\frac{\phi(r)}{2r^2}\di\mu^L\\
&	\lim_{L\to+\infty}	\int_{(\Rin,R_0)\times\R}\frac{\phi(r)}{2r^2}\di\mu^L
	\le \int_{(\Rin,R_0)\times\R}\frac{\phi(r)}{2r^2}\psi_R(k)\di\mu+ \frac{C}{K^2}\,.
	\end{split}
\end{equation*}
Finally by letting $K\to+\infty$ 
\begin{equation*}
\lim_{L\to+\infty}	\int_{(\Rin,R_0)\times\R}\frac{\phi(r)}{2r^2}\di\mu^L=\int_{(\Rin,R_0)\times\R}\frac{\phi(r)}{2r^2}\di\mu\,,
\end{equation*}
which together with \eqref{eq:constr1} yield \eqref{eq:constr}.
\end{proof}
We now proceed with the $\Gamma-\liminf$ inequality.
\begin{prop}[Lower bound]\label{prop:lower_bound}
	Let $\mathcal F_L$ and $\mathcal F_\infty$ be as in \eqref{def:F_L} and \eqref{def:F_infty} respectively. Let $L>0$ and let $(u_r^L,u_\theta^L,\xi^L)\subset \mathcal{A}_L^{\rm in}\times \mathcal{A}_L^{\rm out}$ be a sequence converging to $\mu\in\mathcal{M}_\infty$ in the sense of Definition \ref{def:convergence}. Then there holds
	\begin{equation*}\label{eq:lowe_bound}
		\liminf_{L\to \infty}\mathcal F_L(u_r^L,u_\theta^L,\xi^L)\ge \mathcal F_\infty(\mu).
	\end{equation*}
\end{prop}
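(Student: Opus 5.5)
We may assume $\liminf_L\mathcal F_L(u_r^L,u_\theta^L,\xi^L)<+\infty$, otherwise there is nothing to prove. Passing to a subsequence realizing the liminf, the energies are then uniformly bounded. First we discard every term in \eqref{def:F_L} except the stretching--bending part. All the other terms are nonnegative, and so is the last one, $\frac1{L^2}(\dots)$. Restricting the radial integration to $(\Rin,R_0)$ also only decreases the integral, because the integrand is nonnegative. Hence
\[
\mathcal F_L(u_r^L,u_\theta^L,\xi^L)\ge \pi\strokedint_{-\pi L}^{\pi L}\int_{\Rin}^{R_0}\Big(\dot u^\star(\partial_r\xi^L)^2+\frac{(\partial_{\theta\theta}\xi^L)^2}{r^4}\Big)r\di r\di\theta .
\]
Next we use Plancherel \eqref{plancherel-derivatives} together with \eqref{derivatives-a}, exactly as in Remark~\ref{rem:limit}(i) and in \eqref{eq:compactness}. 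The $k=0$ contribution $\dot u^\star(\dot a_0)^2\ge0$ is dropped. This bounds the right-hand side from below by
\[
G(\mu^L):=\pi\int_{(\Rin,R_0)\times\R}\Big[\frac{\dot u^\star(r)}{4k^2}\Big(\frac{\di\mu^L_{,r}}{\di\mu^L}\Big)^2+\frac{k^2}{r^4}\Big]r\di\mu^L ,
\]
where $\mu^L=\mu^L(\xi^L)$. Remark~\ref{rem:int-byparts}\ref{rem-ii} guarantees that $\mu^L_{,r}\ll\mu^L$ with density $\partial_r a^L/a^L$. On $\{k=0\}$ we have $a^L(r,0)=0$, so the atom at $k=0$ carries no mass and the expression is well defined.

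It remains to prove $\liminf_L G(\mu^L)\ge\mathcal F_\infty(\mu)$ along the convergence $(\mu^L,\mu^L_{,r})\stackrel{*}{\rightharpoonup}(\mu,\mu_{,r})$. For this we use the representation \eqref{def:F_infty2}. Set $\tilde\mu^L=(\mu^L,\mu^L_{,r})$ and define the integrand $f\colon(\Rin,R_0)\times\R\times(\R^+\times\R)\to[0,\infty]$ by
\[
f(r,k,(p,q)):=\pi\Big[\frac{\dot u^\star(r) r}{4k^2}\frac{q^2}{p}+\frac{k^2}{r^3}p\Big]\quad\text{for }p>0,
\]
with $f:=0$ if $p=q=0$ and $f:=+\infty$ if $p=0\ne q$. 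For $k\neq0$, $f$ is convex, positively $1$-homogeneous and lower semicontinuous in $(p,q)$, and it is continuous in $(r,k)$ away from $k=0$. Then $G(\mu^L)=\int f(r,k,\frac{\di\tilde\mu^L}{\di|\tilde\mu^L|})\di|\tilde\mu^L|$. A Reshetnyak-type lower semicontinuity theorem for convex $1$-homogeneous integrands (the Benamou--Brenier functional, \cite[Proposition~5.18]{Sa15}, or \cite[Theorem~2.38]{AmFuPa:00}) should then give the inequality on any open set $U\Subset(\Rin,R_0)\times(\R\setminus\{0\})$. On such sets the weights $\dot u^\star r/(4k^2)$ and $k^2/r^3$ are continuous and bounded above and below, so there we can use the dual formula
\[
\int_U f\,\di|\tilde\mu|=\sup\Big\{\int_U \alpha\di\mu+\int_U\beta\di\mu_{,r}\colon \alpha,\beta\in C_c(U),\ \alpha+\frac{\beta^2\,k^2}{\pi\dot u^\star r}\le \pi\frac{k^2}{r^3}\Big\}.
\]
Each functional inside the supremum is weakly-* continuous, so their supremum is lower semicontinuous. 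This gives $\liminf_L\int_U f\di|\tilde\mu^L|\ge\int_U f\di|\tilde\mu|$.

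Finally we exhaust $(\Rin,R_0)\times\R$ by an increasing family of sets $U$ and apply monotone convergence. Two points need care here. First, the line $\{k=0\}$ must carry no $\mu$-mass. Using the bound on the bending part, $\int_{\{|k|<\delta\}}\di\mu^L$ is not directly small, so instead we argue via the homogeneous structure. The term $\frac{q^2}{4k^2p}$ blows up as $k\to0$. So if $\mu$ charged $\{k=0\}$ with $\mu_{,r}\ne0$ there, $\mathcal F_\infty(\mu)$ would be infinite, while the lower semicontinuity on the sets $U$ would force the liminf to be infinite as well, contradicting finite energy. If $\mu_{,r}=0$ on that line, then the restriction $\mu\res\{k=0\}$ is constant in $r$. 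The marginal constraint in $\mathcal M_\infty$ is automatically inherited, and the contribution of this line to $\mathcal F_\infty$ is zero anyway since the integrand there vanishes. Second, the boundary $r=R_0$ causes no trouble, since $\mu$ does not charge $[R_0,\Rout)$ by Proposition~\ref{prop:compactness}. I expect the main obstacle to be this careful handling of the degenerate weight near $k=0$ (and the convention $0\cdot\infty$ there), together with justifying Reshetnyak on non-compact $k$-ranges. Both are handled by the localization above and the fact that $f\ge0$.
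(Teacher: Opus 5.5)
Your reduction to the stretching--bending term, the Plancherel rewriting as $G(\mu^L)$, and the lower semicontinuity on each $U\Subset(\Rin,R_0)\times(\R\setminus\{0\})$ are correct. Your dual formula is computed correctly, and this part is in substance the paper's argument. The paper simply applies Reshetnyak's theorem (Theorem~2.38 in Ambrosio--Fusco--Pallara) to the Benamou--Brenier form \eqref{def:F_infty2} on the whole domain at once.

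The gap is your treatment of the axis $\{k=0\}$, in the case $\mu_{,r}\res\{k=0\}\neq0$. There you assert that lower semicontinuity on the sets $U$ forces $\liminf_L G(\mu^L)=+\infty$. But every such $U$ stays a positive distance from the axis, so $\int_U f\,\di|\tilde\mu|$ never sees $|\tilde\mu|$ on $\{k=0\}$. For instance, let $\mu$ contain a component $\nu\otimes\delta_0$ with $\nu_{,r}\neq0$ and carry no mass in $\{0<|k|<1\}$. Then $|\tilde\mu|(U)=0$ for every $U$ near the axis. Your exhaustion then only gives $\liminf_L G(\mu^L)\ge\mathcal F_\infty(\mu\res\{k\neq0\})$, which may be finite. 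So the required inequality $\liminf_L G(\mu^L)\ge\mathcal F_\infty(\mu)=+\infty$ is not established in this case. Your second subcase, $\mu_{,r}\res\{k=0\}=0$, is fine.

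What is missing is a lower bound on neighbourhoods of the axis. Any of the following closes it.

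\begin{enumerate}
\item[(a)] Apply Reshetnyak on all of $(\Rin,R_0)\times\R$, with $f$ extended by $f(r,0,(p,0)):=0$ and $f(r,0,(p,q)):=+\infty$ for $q\neq0$. This extension is jointly lower semicontinuous, convex and positively $1$-homogeneous. It is exactly what the paper's direct appeal to Reshetnyak on the whole domain implicitly uses.
\item[(b)] Replace $k^{-2}$ by the continuous bounded weight $\min\{k^{-2},T\}$ and run your duality argument on sets containing the axis. Then let $T\uparrow\infty$ by monotone convergence.
\item[(c)] Work along your bounded-energy subsequence and use $\dot u^\star\ge c>0$ together with local boundedness of $\mu^L$. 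Cauchy--Schwarz gives
\[
|\mu^L_{,r}|\big((a,b)\times(-\delta,\delta)\big)\le\Big(\int_a^b\sum_{k\neq0}\frac{(\partial_ra^L)^2}{k^2a^L}\di r\Big)^{1/2}\Big(\int_a^b\sum_{|k|<\delta}k^2a^L\di r\Big)^{1/2}\lesssim\delta\,\mu^L\big((a,b)\times(-\delta,\delta)\big)^{1/2}\lesssim\delta .
\]
Lower semicontinuity of the total variation on open sets then gives $|\mu_{,r}|((a,b)\times\{0\})=0$. So only your second subcase can occur.
\end{enumerate}
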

\begin{proof}
%
Let $(u_r^L,u_\theta^L,\xi^L)$ be as in the statement and let
$\mu^L:=\mu^L(\xi^L)$
and $\mu^L_{,r}:=\mu^L_{,r}(\xi^L)$ be defined accordingly to Definition \ref{def:muL}, that is,
\begin{equation*}
	\mu^L= \sum_{
		k\in \frac{\mathbb{Z}}{L}} a^L(r,k)  \L^1\res{(\Rin,\Rout)}\times \delta_k\,,
	\quad
	\mu^L_{,r}= \sum_{
		k\in \frac{\mathbb{Z}}{L}} \partial_ra^L(r,k)  \L^1\res{(\Rin,\Rout)}\times\delta_k\,.
\end{equation*} 
Combining \eqref{plancherel-derivatives} with \eqref{derivatives-a} we find
\begin{equation}\label{eq:lower_bound}
	\begin{split}
	\mathcal F_L&(u_r^L,u_\theta^L,\xi^L)\ge \pi\strokedint_{-\pi L}^{\pi L}\int_{\Rin}^{\Rout} \Big(\dot u^\star(r){\left(\partial_r\xi^L\right)^2}
	+\frac{(\partial_{\theta\theta}\xi^L)^2}{r^4}\Big)r\di r\di\theta\\
	&=\pi \int_{\Rin}^{\Rout} \Big(
	\sum_ {k\in \frac{ \mathbb{Z}}{L},k\ne0}\frac{\dot u^\star(r)}{4k^2}\frac{(\partial_r{a}^L(r,k))^2}{ a^L(r,k)}+\sum_ {k\in \frac{ \mathbb{Z}}{L}} \frac{a^L(r,k)k^2}{r^4}
	\Big)r\di r
	\\
	&=\pi \int_{(\Rin,\Rout)\times\R} \bigg(  \frac{\dot u^\star(r)}{4k^2}\Big(\frac{\di\mu_{,r}^L}{\di\mu^L}(r,k)\Big)^2+\frac{k^2}{r^4}\bigg)r\di\mu^L
	\\
	&=\pi
	 \int_{(\Rin,\Rout)\times\R} 
	\frac{\dot u^\star(r)r}{4k^2}\Bigl(\frac{\di \mu^L}{\di|\tilde\mu^L|}(r,k)\Bigr)^{-1}\Bigl(\frac{\di \mu^L_{,r}}{\di|\tilde\mu^L|}(r,k)\Bigr)^2 \di|\tilde\mu^L|
	+\pi  \int_{(\Rin,\Rout)\times\R} \frac{k^2}{r^3} \di\mu^L\,,
	\end{split}
\end{equation}
where  $\tilde\mu^L:=(\mu^L,\mu^L_{,r})$, and the last equality follows from Remark \ref{rem:limit} \ref{rem:limit(ii)}.
Now  we invoke Reshetnyak Theorem (cf. \cite[Theorem 2.38]{AmFuPa:00}) and obtain
\begin{equation}\label{eq:Resh1}
\liminf_{L\to+\infty}
\int_{(\Rin,\Rout)\times\R} \frac{k^2}{r^3} \di\mu^L\ge \int_{(\Rin,\Rout)\times\R} \frac{k^2}{r^3} \di\mu \,,
\end{equation}
and 
\begin{equation}\label{eq:Resh2}
	\begin{split}
\liminf_{L\to+\infty}	&
\int_{(\Rin,\Rout)\times\R} 	\frac{u^\star(r)r}{4k^2}
\Bigl(\frac{\di \mu^L}{\di|\tilde\mu^L|}(r,k)\Bigr)^{-1}\Bigl(\frac{\di \mu^L_{,r}}{\di|\tilde\mu^L|}(r,k)\Bigr)^2 \di|\tilde\mu^L|
\\ 
&\ge\int_{(\Rin,\Rout)\times\R} \frac{u^\star(r)r}{4k^2}
\Bigl(\frac{\di \mu}{\di|\tilde\mu|}(r,k)\Bigr)^{-1}\Bigl(\frac{\di \mu_{,r}}{\di|\tilde\mu|}(r,k)\Bigr)^2 \di|\tilde\mu|\,,
	\end{split}
\end{equation}
with $\tilde\mu:=(\mu,\mu_{,r})$.
Gathering together \eqref{eq:lower_bound}, \eqref{eq:Resh1} and \eqref{eq:Resh2} we find
\begin{equation*}
	\begin{split}
			\liminf_{L\to \infty}\mathcal F_L(u_r^L,u_\theta^L,\xi^L)\ge 
			 \mathcal{F}_\infty(\mu)\,.
	\end{split}
\end{equation*}
\end{proof}
\section{Upper bound}\label{sec:upb}
In this section we prove the $\Gamma-\limsup$ inequality.
\begin{prop}[Upper bound]\label{prop:upb}
	Let $\mu\in \mathcal{M}_\infty$. Then for $L>0$ there exists a sequence $(u_r^L,u_\theta^L,\xi^L)\in \mathcal{A}_L^{\rm in}\times \mathcal{A}_L^{\rm out}$ that converges
	to $\mu\in\mathcal{M}_\infty$ in the sense of Definition \ref{def:convergence} and such that
	\begin{equation*}
		\limsup_{L\to\infty}\F_L(u_r^L,u_\theta^L,\xi^L)\le \F_\infty(\mu)\,,
	\end{equation*}
	with $\F_L$ and $\F_\infty$ defined as in \eqref{def:F_L} and \eqref{def:F_infty} respectively.
\end{prop}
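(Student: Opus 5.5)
We assume $\mathcal F_\infty(\mu)<\infty$, since otherwise there is nothing to prove. The construction then proceeds by approximation, using the convexity and $1$-homogeneity of $\mathcal F_\infty$ noted in Remark~\ref{rem:limit}\ref{rem:limit(ii)}. By a diagonal argument, it suffices to build recovery sequences for a dense class of measures $\nu$ with $\nu\stackrel{*}{\rightharpoonup}\mu$, $\nu_{,r}\stackrel{*}{\rightharpoonup}\mu_{,r}$ and $\mathcal F_\infty(\nu)\to\mathcal F_\infty(\mu)$.

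\textbf{Step 1: reduction to discrete measures.} First we truncate in $k$. We cut $|k|\ge K$, which costs at most $C/K^2$ on the marginal because $\int k^2\di\mu<\infty$. We also cut $|k|\le\delta$, since the stretching term has weight $1/k^2$. The missing mass of the marginal is then redistributed to a fixed frequency $k_0$. Its profile is chosen smooth and vanishing like $(R_0-r)$, so that $\sqrt{\cdot}$ of it has finite Dirichlet-type energy; this is exactly the profile $\beta^2=-ru^\star$ of Lemma~\ref{lem:1dim-sol}(v). Next we discretize in $k$ at an intermediate scale $1/L_0$, with $1\ll L_0\ll L$. Each strip $k\in[j/L_0,(j+1)/L_0)$ is collapsed onto the atom $k_j=j/L_0$, with density equal to the strip marginal $a_j(r)$. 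Jensen's inequality (the Benamou--Brenier integrand $(\dot a)^2/a$ is jointly convex) shows that the stretching energy does not increase, up to the factor $k^2/k_j^2\to1$, and the bending term changes by $o(1)$. To obtain smoothness we extend each $a_j$ slightly beyond $R_0$ in a way compatible with the arclength profile, namely proportionally to $\beta^2$ continued by zero. We then mollify the \emph{squared} coefficients $a_j$ at scale $\eps(L)$ and rescale them multiplicatively, so that $\sum_j a_j=-2ru^\star$ holds exactly on $(\Rin,R_0)$. Convexity again controls the energy under mollification. The delicate region is near $R_0$, where $\sqrt{a_j}\sim\sqrt{R_0-r}$ and the derivatives blow up as in Lemma~\ref{lem:1dim-sol}(v). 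There we must check that the cut-off and mollification errors are weighted by powers of $L^{-1}$ that beat these singularities.

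\textbf{Step 2: the out-of-plane displacement.} Since $k_j\in\mathbb Z/L_0\subset\mathbb Z/L$ when $L_0\mid L$, we define
\[
\xi^L(r,\theta):=\sum_j \frac{\sqrt{a_j(r)}}{k_j}\sqrt2\sin(k_j\theta).
\]
This is a finite sum, $2\pi L$-periodic, and it vanishes for $r\ge R_0+\eps$. Remark~\ref{rem:limit}(i) gives $\mathcal F_\infty(\mu^L(\xi^L))=\mathcal F_\infty(\nu)$ for the leading terms. The terms carrying the factor $L^{-2}$ are bounded by norms of $\partial_r^2\sqrt{a_j}$, which are of size $\eps^{-3/2}$ by the mollification; they are therefore negligible provided $\eps(L)\to0$ slowly enough. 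The same choice makes the convergence in Definition~\ref{def:convergence} immediate.

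\textbf{Step 3: the in-plane displacement, which is the main obstacle.} Write
\[
\tfrac12(\partial_\theta\xi^L)^2=\tfrac12\fint(\partial_\theta\xi^L)^2+g(r,\theta),
\]
where $g$ has zero $\theta$-mean. By the exact constraint, the mean part equals $-ru^\star$ on $(\Rin,R_0)$. We set $u_r^L=u^\star+w_r$ and $u_\theta^L=-\int_0^\theta g/r\,\di\theta'+w_\theta$, where the first term is periodic because $g$ has zero mean. This kills the hoop term up to corrections coming from $w_r$, and it makes the second term vanish, because $\xi^L=0$ where $u^\star>0$ (this uses the support choice $\eps<$ distance to the tensile region; if needed we shift the extension inside slightly). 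The radial term then requires $\partial_r w_r\approx-\tfrac1{2L^2}(\partial_r\xi^L)^2$. This forces a $\theta$-dependent $w_r$ of size $L^{-2}$, and its contributions $L^2 w_r^2/r^2$ to the hoop term and $L^4(\partial_\theta w_r)^2$ to the shear term are $O(L^{-2})$ and $O(1)$ respectively. Hence we must correlate the two components. We take $w_r$ equal to the $\theta$-average of $-\tfrac1{2L^2}\int(\partial_r\xi^L)^2$ only, so it is independent of $\theta$ and has no shear cost. The oscillating remainder of $(\partial_r\xi^L)^2/L^2$ costs $L^2\cdot L^{-4}\|\partial_r\xi\|_4^4=O(L^{-2}\eps^{-c})\to0$. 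We then compensate $w_r/r$ in the hoop equation by adding $-\int w_r\,\di\theta$ into $u_\theta$. Here a problem arises, since a $\theta$-independent $w_r$ is not absorbable by a periodic $u_\theta$. Instead we absorb it into the constraint: we modify $a_j$ by a factor $1+O(L^{-2})$, which changes $\mathcal F_\infty$ by $o(1)$. Finally, the shear term contains $\partial_r u_\theta$, $u_\theta/r$ and $\partial_r\xi\partial_\theta\xi/r$, all of order one with zero $\theta$-mean. Its smallness uses the fact that $\int g\,\di\theta$ is built from frequencies $k_j\pm k_i\neq0$, so that $u_\theta$ is $O(1)$, together with $L^2\partial_\theta u_r$. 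We therefore choose an additional $\theta$-oscillating correction $\tilde w_r$ of size $L^{-2}$ with $L^2\partial_\theta\tilde w_r/(2r)$ cancelling the shear. Its own costs in the radial and hoop terms are $L^2\cdot O(L^{-4})$, hence negligible, provided its $r$-derivative is controlled in terms of $\eps$. Verifying these $\eps$--$L$ bookkeeping estimates near $R_0$, and choosing $L_0$, $\eps$ and the truncation parameters as functions of $L$ in the final diagonal argument, is where I expect the main work to lie.
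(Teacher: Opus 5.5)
There is a genuine gap, and it sits at the free boundary $R_0$. Your discretisation by Jensen, your mollification of squared coefficients, your choice $u_\theta=-\int_0^\theta g/r$ and your shear-cancelling corrector all match the paper. But the profile $S(r):=-2ru^\star(r)$ vanishes \emph{linearly} at $R_0$, so $\sqrt S\sim\sqrt{R_0-r}$ is not in $W^{1,2}$ near $R_0$. Lemma~\ref{lem:1dim-sol}(v) gives $|\dot\beta|\sim(R_0-r)^{-1/2}$, which is not square integrable. This contradicts your Step~1 claim that this profile has finite Dirichlet energy, and it breaks the plan in two places.

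First, your truncated measures have infinite energy. Let $\nu\in\mathcal M_\infty$ be supported in $\{|k|\le K\}$. Testing $\nu_{,r}$ with functions of $r$ alone gives $\dot S=S\int\frac{\di\nu_{,r}}{\di\nu}\di\nu_r$. Jensen then yields $\int(\frac{\di\nu_{,r}}{\di\nu})^2\di\nu\ge\int_{\Rin}^{R_0}\dot S^2/S\,\di r=+\infty$, so the stretching part of $\mathcal F_\infty(\nu)$ is at least $cK^{-2}$ times infinity. Finite energy forces unbounded frequencies as $r\to R_0$, which is why the paper never truncates and keeps the full series over $\mathbb Z/L_0$. (Separately, moving the mass of $\{|k|\ge K\}$ to a fixed $k_0$ multiplies its stretching weight by $k^2/k_0^2$, and nothing controls that.)

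Second, no $\xi\in\mathcal A_L^{\rm out}$ can satisfy $\fint(\partial_\theta\xi)^2=S$ exactly on $(\Rin,R_0)$. The map $r\mapsto(\fint(\partial_\theta\xi)^2)^{1/2}$ is $W^{1,2}$, with derivative bounded by $(\fint(\partial_{r\theta}\xi)^2)^{1/2}$, whereas $\sqrt S$ is not, and $R_0$ is an interior point of the domain. So after your rescaling to the exact constraint, the $\eps^{-3/2}$ bound on $\partial_r^2\sqrt{a_j}$ in Step~2 fails, and the exact cancellation of the hoop mean in Step~3 rests on an impossible identity. Your handling of the second term is inconsistent for the same reason. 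The tensile region is at distance zero from the wrinkled one, and your $\xi^L$ does not vanish on $(R_0,R_0+\eps)$, where $u^\star>0$.

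The missing idea is a boundary layer in which the constraint is deliberately violated, together with an accounting of what that costs. The paper takes $\xi^L=\psi_\delta f^L\hat\xi^L$ with $\psi_\delta\equiv0$ on $[R_0-\delta,\infty)$, which also kills the second term. The price is $L^2\int_{R_0-2\delta}^{R_0}(u^\star)^2r^{-1}\di r\sim L^2\delta^3$ in the hoop term, so $\delta=o(L^{-2/3})$.

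Against this, the shear corrector $L^{-2}u_r^{0,L}$ (your $\tilde w_r$) is not $O(L^{-2})$. The function $u_\theta$ is of size $L_0$, not $O(1)$, because frequency differences $k_i-k_j$ go down to $1/L_0$, and the double $\theta$-integrals over a period contribute $L_0^4$. Combined with $\ddot\psi_\delta\sim\delta^{-2}$ and $(\ddot f^L)^2\lesssim(R_0-r)^{-3}\eps^{-1}$ in the layer, this gives costs up to $\frac{L_0^4}{L^2}\frac{1}{\delta^2\eps}$. The $\dot f^L$ part of $\partial_r\xi^L$ adds a loss $\omega(2M\eps)\log(\eps/\delta)$ to the leading term.

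All of these vanish simultaneously only in a narrow window: $\delta=L^{-2/3}M^{-1/8}$, $\eps=M\delta$, $L_0\sim M^{1/8}$, with $M(L)\to\infty$ tuned to $\omega$. So ``$\eps(L)\to0$ slowly enough'' cannot work; this balance is the heart of the upper bound.

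Two smaller points. With sine modes only you cannot reach mass on $k<0$, since in the paper's convention those are cosine modes. Once cosines are present, $u_\theta$ and $\partial_r\xi\partial_\theta\xi$ lose their zero $\theta$-mean, and a $\theta$-independent correction like $u_\theta^{1,L}$ is needed to keep $u_r$ periodic. Also, compensating $(\partial_r\xi)^2/(2L^2)$ via $w_r$ is unnecessary, since $L^{-2}\fint\int(\partial_r\xi)^4\to0$ directly.
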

\noindent 
\textbf{Description of the proof strategy.}
The proof of Proposition \ref{prop:upb} is lengthy and technically involved, and proceeds through several steps which we outline below. While the overall strategy is inspired by \cite{BeMa25}, significant modifications are required to account for the radial geometry and the presence of the nonlinear constraint $ru^\star(r)$.\\

\noindent
Let  $\mu\in \mathcal M_\infty$ be given. Our goal is to construct a recovery sequence $(u_r^L, u_\theta^L, \xi^L) \in \mathcal{A}_L^{\rm in} \times \mathcal{A}_L^{\rm out}$. We begin by defining a sequence of out-of-plane displacements $(\xi^L)_{L>0}$ via their Fourier coefficients $a_k^L$, with $k \in \frac{\mathbb Z}{L_0}$:
\begin{equation*}
	\xi^L(r,\theta)=\sum_{k\in \frac{ \mathbb{Z}}{L_0}, k>0 }a_k^L(r)\sqrt2\sin(k\theta) +
	\sum_{k\in \frac{ \mathbb{Z}}{L_0}, k<0 }a_k^L(r)\sqrt2\cos(k\theta) \,.
\end{equation*}

Here $L_0:=L/n(L)$, where $n(L)\in\mathbb N$ is suitably chosen so that
\[
1\ll L_0\ll L.
\]
The parameter $L_0$ determines the period of the elementary building blocks used in the recovery sequence. It introduces an intermediate scale separating the microscopic wrinkle oscillations, which occur at scale one in the rescaled variables, from the macroscopic radial variation of the limiting measure. This scale makes it possible to localize the construction while keeping the error terms negligible in the limit. Moreover, every $2\pi L_0$-periodic function is also $2\pi L$-periodic, so the above construction is compatible with the admissibility conditions defining $\mathcal A_L^{\rm in}$ and $\mathcal A_L^{\rm out}$.

 \noindent
  The coefficients are required to satisfy the following properties:
\begin{itemize}
	\item  sufficient regularity  to ensure $\xi^L\in \mathcal A_L^{\rm out}$;
	\item fulfilment of the constraint 
	\begin{equation*}A^
		L(r):=\fint_{-\pi L}^{\pi L} \frac{(\partial_\theta\xi^L(r,\cdot))^2}2\di\theta=\frac12\sum_{k\in \frac{ \mathbb Z}{L}}(a_k^L(r))^2k^2\simeq-ru^\star(r) \text{ for a.e. } r\in (\Rin,R_0)\,;
	\end{equation*}
	\item  quantitative control of the derivatives of $\xi^L$, as required for estimating the energy $\mathcal F_L$.
\end{itemize} 
To achieve this, we proceed in several steps.
First, we disintegrate and discretize (Lemma \ref{lem:disint=pushfwd} and Lemma \ref{lem:discretisation}) $\mu$ in the $k$-variable. This yields  a sequence of discrete measures $(\mu^L)_{L>0}$ of the form $$\mu^L=\sum_{k \in \frac{ \mathbb Z}{L}}\bar b^L(r,k)\mathcal L^1\res(\Rin,R_0)\times \delta_k\,.$$

\noindent In the second step,  each coefficient $\bar b^L(r,k)$ is regularised by convolution with a mollifier $\rho_\eps(r)$ at scale $\eps=\eps(L)\to0$. The latter represent the square of the Fourier coefficient of the partial derivative $\partial_\theta\xi^L$. More precisely we define
$$a_k^L(r):= \frac1k\sqrt{ (\bar b^L(\cdot,k)*\rho_\eps)(r)}$$ and construct 
$\xi^L$ accordingly (cf.~Lemma \ref{lem:moll}).  Since the convolution requires $\bar b(r,k)$ to be defined  for all $r\in \R$, we need to extend appropriately so as to preserve the constraint as accurately as possible.\\
The smoothing {step introduces a small error} in the constraint, leading to $$A^L(r)= -ru^\star(r)+o_L(1)\,.$$
 To correct this and enforce the constraint exactly, we rescale $\xi^L$ by the factor $$f^L(r):=\sqrt{\frac{-ru^\star(r)}{A^L(r)}}\,.$$  
Finally, once the out-of-plane displacement is constructed, we define the corresponding in-plane displacement so that the first, third, and fourth terms in \eqref{def:F_L} vanish in the limit as $L \to \infty$. We refer to the proof of Proposition \ref{prop:upb} for the detailed construction.\\

\noindent\textbf{Disintegration and discretisation of $\mu$.}
We recall the notion of disintegration of measures only in the specific form required for this work, and refer to \cite{AmFuPa:00} for a comprehensive treatment of the general theory.
\begin{definition}[Disintegration of measures in the $r$-variable]\label{def:disintegration}
	Let $I\subset\R$ be an interval  and let $\mu\in\mathcal{M}_b(I\times\R)$. We say that the family 
	$$(\nu_r,g(r))_{r\in I} \subset \mathcal{M}_b(\R)\times \R$$ is a disintegration of $\mu$ $($in the $r$-variable$)$ if $r\mapsto\nu_r$ is Lebesgue measurable, $|\nu_r|(\R)=1$ for every $r\in I$,  $g\in L^1(I)$, and 
	\begin{equation*}\label{eq:disint}
		\int_{I\times\R}f(r,k)\di\mu=\int_I\int_{\R}f(r,k)\di\nu_r(k)g(r)\di r\,,
	\end{equation*}
	for every $f\in L^1(I\times\R;|\mu|)$. 
\end{definition}
Formally it simply means $\di\mu(r,k) = \di\nu_r (k) g(r) \di r$.
%
%
%
{We then recall \cite[Lemma 3.2]{BeMa25}}
\begin{lemma}\label{lem:disint=pushfwd}
	Let $I\subset\R$ be an interval  and let $\mu\in\mathcal{M}^+_b(I\times\R)$. Then 
	\begin{equation*}\label{eq:push-forward}
		\int_{I\times\R}\phi(r)\di\mu=	\int_Ig(r)\phi(r)\di r \quad \forall\phi\in C_c^{\infty}(I)\,,
	\end{equation*}
	for some non-negative $g\in L^1(I)$,	if and only if there exists $r\mapsto\nu_r\in\mathcal{M}^+_b(\R)$ Lebesgue measurable such that $(\nu_r,g(r))_{r\in I} $ is a disintegration of $\mu$.
\end{lemma}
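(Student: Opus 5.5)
The statement to prove is Lemma~\ref{lem:disint=pushfwd}: $\mu$ has $r$-marginal $g\,\mathcal L^1$ if and only if $\mu$ disintegrates as $\nu_r\otimes g(r)\di r$. I would prove the two implications separately, relying on the general disintegration theorem in \cite{AmFuPa:00} (Theorem~2.28 there) for the nontrivial direction.

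\emph{The "if" direction.} Assume $(\nu_r,g(r))_{r\in I}$ is a disintegration of $\mu$ with $\nu_r\ge0$. Take $\phi\in C_c^\infty(I)$ and apply Definition~\ref{def:disintegration} to $f(r,k)=\phi(r)$. This $f$ is bounded and $\mu$ is finite, so $f\in L^1(I\times\R;\mu)$. Since $\nu_r$ is positive and $|\nu_r|(\R)=1$, we have $\int_\R\di\nu_r=1$. Therefore $\int\phi\di\mu=\int_I\phi(r)g(r)\di r$. Non-negativity of $g$ follows by testing with $\phi\ge0$: the identity shows that $g\,\mathcal L^1$ is the marginal of a positive measure, hence $g\ge0$ a.e.

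\emph{The "only if" direction.}
\begin{enumerate}
\item Let $\pi(r,k)=r$ and let $\sigma:=\pi_\#\mu\in\mathcal M_b^+(I)$ be the $r$-marginal; it is finite because $\mu(I\times\R)<\infty$.
\item The hypothesis gives $\int\phi\di\sigma=\int\phi g\di r$ for all $\phi\in C_c^\infty(I)$. Two positive Radon measures on the open interval $I$ that agree on $C_c^\infty(I)$ coincide, by density in $C_c(I)$ and the Riesz representation theorem. Hence $\sigma=g\,\mathcal L^1\res I$, and in particular $g\in L^1(I)$.
\item By the disintegration theorem there is a $\sigma$-measurable family of probability measures $\nu_r$ on $\R$ with $\int f\di\mu=\int_I\int_\R f\di\nu_r\di\sigma(r)$ for every $f\in L^1(\mu)$. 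The theorem is first stated for bounded Borel $f$; it extends to all of $L^1(\mu)$ by splitting into positive and negative parts and applying monotone convergence.
\item Substituting $\di\sigma=g\di r$ yields exactly the identity in Definition~\ref{def:disintegration}.
\end{enumerate}

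\emph{Adjustments and main obstacle.} The remaining work is bookkeeping:
\begin{itemize}
\item The theorem gives $\nu_r$ only for $\sigma$-a.e.\ $r$, hence only on $\{g>0\}$ up to null sets. On the exceptional set, including $\{g=0\}$, I would redefine $\nu_r:=\delta_0$. This keeps $|\nu_r|(\R)=1$ for \emph{every} $r$ and does not change the integral, since that set is weighted by $g=0$ or is Lebesgue-null.
\item $\sigma$-measurability must be upgraded to Lebesgue measurability of $r\mapsto\nu_r(B)$ for Borel $B$. This follows from $\sigma\ll\mathcal L^1$ after the redefinition on a null set.
\end{itemize}
I expect this measurability and null-set adjustment to be the only delicate point; the rest is a direct application of the standard theorem.
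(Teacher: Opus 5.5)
Your argument is correct and follows the standard route behind the paper's citation of \cite[Lemma 3.2]{BeMa25}. You test with $f(r,k)=\phi(r)$ for the easy direction; for the converse you identify the $r$-marginal $\sigma=\pi_\sharp\mu$ with $g\,\mathcal L^1\res I$ and apply the disintegration theorem of \cite{AmFuPa:00}. One correction to your bookkeeping: passing from $\sigma$-measurability to Lebesgue measurability does not follow from $\sigma\ll\mathcal L^1$, which only shows that Lebesgue-measurable sets are $\sigma$-measurable. It follows instead from $\mathcal L^1\res\{g>0\}\ll\sigma$ (for a Borel representative of $g$), i.e.\ $\sigma$-null subsets of $\{g>0\}$ are Lebesgue-null. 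This is exactly why your redefinition $\nu_r:=\delta_0$ on $\{g=0\}$ is needed, since that set is $\sigma$-null but possibly not Lebesgue-null.
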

%
\begin{cor}[Disintegration of $\mu\in\mathcal{M}_\infty$ in the $r$-variable]\label{cor:disint=pushfwd}
	Let $\mu\in\mathcal{M}_\infty$. Then there exists $r\mapsto\nu_r\in\mathcal{M}^+_b(\R)$ measurable such that $(\nu_r,-2{u^\star(r)}{r})_{r\in (\Rin,R_0)} $ is a disintegration of $\mu$. 
\end{cor}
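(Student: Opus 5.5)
The corollary follows by applying Lemma~\ref{lem:disint=pushfwd} on the interval $I=(\Rin,R_0)$ with the density $g(r):=-2ru^\star(r)$. The plan is to check the hypotheses of that lemma directly from the definition \eqref{def:limit_measures} of $\mathcal M_\infty$.

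First, regard $\mu$ as a measure on $(\Rin,R_0)\times\R$. This is allowed by Remark~\ref{rem:limit}\ref{rem:limit(i)}, since $\mu([R_0,\Rout)\times\R)=0$. The restriction is still a positive bounded Radon measure, so $\mu\in\mathcal M_b^+(I\times\R)$.

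Next, check that $g$ is admissible. By Lemma~\ref{lem:1dim-sol}(ii) we have $u^\star<0$ on $(\Rin,R_0)$, hence $g\ge0$ there. By the explicit formula \eqref{ustar}, $g(r)=-2\Tin\Rin\, r\log(r/R_0)$ is continuous and bounded on $[\Rin,R_0]$, so $g\in L^1(I)$.

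The marginal condition required in Lemma~\ref{lem:disint=pushfwd}, namely
\[
\int_{I\times\R}\phi(r)\,\di\mu=\int_I g(r)\phi(r)\,\di r\quad\forall \phi\in C_c^\infty(I),
\]
is exactly the constraint built into \eqref{def:limit_measures}. The lemma therefore provides a Lebesgue measurable map $r\mapsto\nu_r\in\mathcal M_b^+(\R)$ such that $(\nu_r,-2ru^\star(r))_{r\in(\Rin,R_0)}$ is a disintegration of $\mu$.

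The only point needing care is the normalization $|\nu_r|(\R)=1$ in Definition~\ref{def:disintegration}, and this is already part of the conclusion of Lemma~\ref{lem:disint=pushfwd}. If that were in doubt, one could redefine $\nu_r$ on the null set where it fails, or on the set where $g=0$, to be $\delta_0$, without changing the disintegration identity. Since $g>0$ on the open interval, this modification only affects a Lebesgue-null set, so no real obstacle is expected.
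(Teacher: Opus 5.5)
Your proposal is correct and is essentially the paper's argument: apply Lemma~\ref{lem:disint=pushfwd} on $(\Rin,R_0)$ with $g(r)=-2ru^\star(r)$, where the marginal hypothesis is exactly the constraint built into \eqref{def:limit_measures}. Your additional checks (restricting $\mu$ to $(\Rin,R_0)\times\R$, $g\ge 0$, $g\in L^1$) are details the paper leaves implicit.
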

\begin{proof}
	The proof follows by Lemma \ref{lem:disint=pushfwd} and from the fact that 
	\begin{equation*}\label{constraint1}
		\int_{(\Rin,R_0)\times\R}\phi(r)\di\mu=	\int_{\Rin}^{R_0}	-2{u^\star(r)}{r}\phi(r)\di r\quad \forall\phi\in C_c^{\infty}(\Rin,R_0)\,.
	\end{equation*}
\end{proof}

\begin{lemma}[Discretisation of $\mu\in \mathcal{M}_\infty$ in the $k$-variable]\label{lem:discretisation}
	Let $\mu\in \mathcal{M}_\infty$ with $\F_\infty(\mu)<+\infty$.
	Then there exists   $(\mu^L)\subset \mathcal{M}_\infty$ with the following properties: 
	\begin{enumerate}[label=$(\roman*)$]
		\item \label{(i)discr}$	\mu^L=\sum_{
			k\in \frac{\mathbb{Z}}{L}} \overline b^L(r,k)  \L^1\res{(\Rin, R_0)}\times\delta_k$ 
		with 
		\begin{equation}\label{eq:constraint}
		\overline 	b^L(\cdot,k)\in W^{1,1}(\Rin,R_0)\quad \text{and}\quad
			\sum_{
				k\in \frac{\mathbb{Z}}{L}}\overline  b^L(r,k)=-2ru^\star(r)\,,\quad \forall r\in (\Rin,R_0)\,,
		\end{equation}
		\begin{equation}\label{eq:energy}
			\F_\infty(\mu^L)= \pi 
			\int_{\Rin}^{R_0}	\sum_{
				k\in \frac{ \mathbb{Z}}{L},k\ne0}
			\biggl(
			\frac{\dot u^\star(r)}{4k^2}\frac{(\partial_r\overline b^L(r,k))^2}{\overline  b^L(r,k)}+ \frac{k^2}{r^4}
			\overline  b^L(r,k) \biggr)
	r	\di r\,;
		\end{equation}
		\item\label{(ii)discr} $
		(\mu^L,\mu^L_{,r})\stackrel{*}{\rightharpoonup}(\mu,\mu_{,r})
		$;
		\item\label{(iii)discr} $	\limsup_{L\to\infty}\F_\infty(\mu^L)\le	\F_\infty\big(\mu\big)$.
	\end{enumerate}
\end{lemma}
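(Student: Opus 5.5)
The discretisation will be obtained by lumping the disintegration of $\mu$ onto the grid $\frac{\mathbb Z}{L}$. By Corollary~\ref{cor:disint=pushfwd} we write $\di\mu(r,k)=\di\nu_r(k)\,g(r)\di r$ with $g(r)=-2ru^\star(r)$ and $\nu_r$ a probability measure. For $k\in\frac{\mathbb Z}{L}$ let $I^L_k:=[k-\tfrac1{2L},k+\tfrac1{2L})$ and set
\begin{equation*}
\overline b^L(r,k):=\mu_r(I^L_k):=g(r)\,\nu_r(I^L_k),
\end{equation*}
so that $\mu^L$ is the push-forward of $\mu$ under $(r,k)\mapsto(r,\pi_L(k))$, where $\pi_L$ is the nearest-grid projection. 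The constraint \eqref{eq:constraint} then holds identically, since $\sum_k\nu_r(I^L_k)=1$. Moreover $\mu^L_{,r}$ is the push-forward of $\mu_{,r}$: testing with $\phi(r)\chi_{I^L_k}(k)$, which is allowed by Remark~\ref{rem:int-byparts}\ref{rem-i}, gives $\partial_r\overline b^L(\cdot,k)=\mu_{,r}(\cdot\times I^L_k)$ in the sense of distributions. This is a finite measure in $r$, and its absolute continuity in $r$ will follow from the energy bound below, giving $\overline b^L(\cdot,k)\in W^{1,1}$. Formula \eqref{eq:energy} is then just Remark~\ref{rem:int-byparts}\ref{rem-ii}. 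One technical point: the cell containing $k=0$ must be handled separately, for instance by shifting the grid cells so that no mass is sent to $k=0$, because $\mathcal F_\infty(\mu)<\infty$ forces $\mu(\{k=0\})=0$ only in the limit.

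For \ref{(ii)discr}, the push-forward map moves points by at most $\frac1{2L}$. Hence for $\varphi\in C_c$ we have $|\int\varphi\,\di\mu^L-\int\varphi\,\di\mu|\le\omega_\varphi(1/L)|\mu|(\ldots)$, and the same estimate holds for $\mu_{,r}$, which is a bounded measure. This gives weak-* convergence of both components.

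The core step is \ref{(iii)discr}, and it relies on convexity (Jensen) in the Benamou--Brenier form of Remark~\ref{rem:limit}\ref{rem:limit(ii)}. Writing $\mu_{,r}=w\,\mu$, the map $(m,p)\mapsto p^2/m$ is jointly convex and $1$-homogeneous. Applying Jensen cellwise on each $I^L_k$ gives
\begin{equation*}
\frac{(\partial_r\overline b^L(r,k))^2}{\overline b^L(r,k)}\le\int_{I^L_k}w^2\,\di\mu_r .
\end{equation*}
The weight $\frac1{4k^2}$ is not constant on a cell, so we replace $k$ by a point $k'\in I^L_k$ and pay a factor $\sup_{k'\in I^L_k}k'^2/k^2\to1$ uniformly on $|k|\ge\delta$. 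The bending term $\frac{k^2}{r^4}\overline b^L$ differs from $\int_{I^L_k}k'^2\,\di\mu_r$ by $O(1/L)\int|k|\,\di\mu$, and this tends to zero since $\int k^2\,\di\mu<\infty$.

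I expect the main obstacle to be the small frequencies $|k|\lesssim\delta$, where the ratio $k'^2/k^2$ is not uniformly close to $1$. I will first try to define the cells so that each grid point $k$ sits at the endpoint of its cell nearest the origin (for instance $I^L_k=[k,k+\frac1L)$ for $k>0$). Then $|k|\le|k'|$ fails only in the direction of larger $|k'|$, and $\frac1{k^2}\ge\frac1{k'^2}$ goes the wrong way. So instead I will place each grid point at the far endpoint, with $I^L_k=(k-\frac1L,k]$ for $k>0$ and symmetrically for $k<0$. This gives $\frac1{k^2}\le\frac1{k'^2}$ exactly, so the stretching term is bounded directly, and the bending term increases only by $O(1/L)$ as above. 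This also keeps mass away from $k=0$. Summing over $k$ and integrating in $r$ then yields $\mathcal F_\infty(\mu^L)\le\mathcal F_\infty(\mu)+o(1)$, and it also gives the $W^{1,1}$ regularity of $\overline b^L$ through $|\partial_r\overline b^L|\le(\overline b^L)^{1/2}(\ldots)^{1/2}$.
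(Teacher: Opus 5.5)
Your final construction is the paper's, up to cosmetic differences. It uses the far-endpoint cells $I^L_k=(k-\frac1L,k]$ for $k>0$ (symmetrically for $k<0$), $\overline b^L(r,k)=-2ru^\star(r)\,\nu_r(I^L_k)$ with no cell at $k=0$, the identity $\partial_r\overline b^L(\cdot,k)=\mu_{,r}(\cdot\times I^L_k)$, uniform continuity for (ii), and cellwise Jensen plus $1/k^2\le 1/k'^2$ on $I^L_k$ for (iii); applying Cauchy--Schwarz to the squares first actually avoids the sign issue in the paper's unsquared inequality \eqref{claim}.

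The one point to tighten, which the paper also leaves implicit, is that the constraint needs $\nu_r(\{0\})=0$. Contrary to your remark, this holds exactly and not only in the limit. Finite energy forces $\frac{\di\mu_{,r}}{\di\mu}=0$ $\mu$-a.e.\ on $\{k=0\}$, so the $k=0$ slice of $\mu$ has $r$-constant density. That density is bounded by $-2ru^\star(r)\to0$ as $r\to R_0$, so it must vanish.
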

\begin{proof}
 Let 
$$(\nu_{r},-2ru^\star(r))_{r\in(\Rin,R_0)}$$ be the disintegration provided by Corollary \eqref{cor:disint=pushfwd}.
We introduce a measure $\mu^L\in\mathcal{M}_b^+((\Rin,R_0)\times\R)$ by setting
	\begin{equation*}
		\mu^L:= \sum_{
			k\in \frac{ \mathbb{Z}}{L}}\overline  b^L(r,k) \L^1\res{(\Rin,R_0)}\times\delta_k\,,
	\end{equation*}
	where, for $(r,k)\in(\Rin,R_0)\times \frac{ \mathbb{Z}}{L}$ we set
	\begin{equation}\label{def:coeff}
	\overline 	b^L(r,k):=\begin{cases}
			0&\text{if }k=0\,,\\[1em]
		-2ru^\star(r)\nu_{r}(I_k^L)&\text{if } k\ne0\,,
		\end{cases}
		\quad\text{and}\quad
		I^L_k:=\begin{cases}
			(k-\frac{1}{L},k]&\text{if }k>0\,,\\[1em]
			[k,k+\frac{1}{L})&\text{if }k<0\,.
		\end{cases}
	\end{equation}
For each $k\in\frac{ \mathbb{Z}}{L}$, the map $\overline  b^L(\cdot,k)$ belongs to $W^{1,1}(\lambda,R_0)$, and its derivative satisfies
	\begin{equation*}
\partial_r	\overline 	b^L(r,k)=\begin{cases}
			0&\text{if }k=0\,,\\[1em]
			\displaystyle {-2ru^\star(r)}\int_{I_k^L}\frac{\di\mu_{,r}}{\di\mu}(r,\hat k)
			\di\nu_{r}(\hat k)&\text{if } k\ne0\,.
		\end{cases}
	\end{equation*}
	This identity is immediate when $k=0$. For $k\ne0$, given $\phi\in C^\infty_c(\Rin,R_0)$, the definition of $\nu_r$ together with Remark  \ref{rem:int-byparts}-\ref{rem-i}  yields
	\begin{equation*}
		\begin{split}
			\int_{\Rin}^{R_0} &\overline  b^L(r,k)\dot\phi(r)\di r=\int_{\Rin}^{R_0}
			{-2ru^\star(r)}\dot\phi(r)
			\int_{\R}\chi_{I_k^L}(\hat k)
			\di\nu_{r}(\hat k)\,\di r\\
			&=\int_{(\Rin, R_0)\times\R}\chi_{I_k^L}(\hat k)\dot\phi(r)\di\mu
			 =- \int_{(\Rin,R_0)\times\R}\chi_{I_k^L}(\hat k)\phi(r)\di\mu_{,r}\\
			&=- \int_{(\Rin,R_0)\times\R}\chi_{I_k^L}(\hat k)\phi(r)\frac{\di\mu_{,r}}{\di\mu}(r,\hat k)
			\di\mu\\
			&
		=-\int_{\Rin}^{R_0}{-2ru^\star(r)}\phi(r)\int_{I_k^L}\frac{\di\mu_{,r}}{\di\mu}(r,\hat k)
			\di\nu_{r}(\hat k)\di r\,.
		\end{split}
	\end{equation*}
Applying Young's inequality, we estimate
\begin{equation*}
	\begin{split}
			\int_{\Rin}^{R_0}|\partial_r\overline b^L(r,k)|\di r\le
		\int_{ I_k^L\times(\Rin, R_0)}\left|\frac{\di\mu_{,r}}{\di\mu}\right|\di\mu
		\le \frac12
		\int_{I_k^L\times(\Rin,R_0)} \left(
	\frac1{k^2}\left(\frac{\di\mu_{,r}}{\di\mu}\right)^2+k^2\right)\di\mu\le C\,.
	\end{split}
\end{equation*}
Summing over all $k$ gives
\begin{equation*}
	\int_{\Rin}^{R_0}\sum_{k\in\frac{\mathbb Z}{L}}|\partial_r\overline b^L(r,k)|\di r\le C \mathcal{F}_\infty(\mu)\le C\,.
\end{equation*}
Consequently, $\mu_{,r}^L\in\mathcal{M}_b((\Rin,R_0)\times\R)$ with
	\begin{equation*}
		\mu_{,r}^L= \sum_{
			k\in \frac{\mathbb{Z}}{L}}\partial_r\overline  b^L(r,k)  \L^1\res{(\Rin,R_0)}\times\delta_k\,,
	\end{equation*}
	and Remark \ref{rem:int-byparts} \ref{rem-ii} ensures that $\mu_{,r}^L\ll\mu^L$.\\
	
Since each $\nu_{r}$ is a probability measure, we have 
	\begin{equation*}
		\sum_{
			k\in \frac{ \mathbb{Z}}{L}}\overline  b^L(r,k)={-2ru^\star(r)} \bigg(\sum_{
			k\in \frac{ \mathbb{Z}}{L},k\ne0} \nu_{r}(I_k^L)\bigg)={-2ru^\star(r)}\,,\quad \forall r\in (\Rin,R_0)\,,
	\end{equation*}
which shows that the constraint in the definition of $\mathcal{M}_\infty$ is satisfied, and so $\mu^L\in\mathcal{M}_\infty$. Property \eqref{eq:energy}  follows immediately, concluding the proof of \ref{(i)discr}. To establish \ref{(ii)discr},
	let $\varphi\in C^\infty_c((\Rin,R_0)\times\R)$. Using \eqref{def:coeff} we obtain
	\begin{equation}\label{eq:show1}
		\begin{split}
			\int_{(\Rin,R_0)\times\R}\varphi\di\mu^L
			&=\int_{\Rin}^{R_0} 
			\sum_{
				k\in \frac{ \mathbb{Z}}{L},k\ne0}\varphi(r,k) (-2ru^\star(r))\nu_{r}(I_k^L)\di r\\
			& = \int_{\Rin}^{R_0} 
			\sum_{
				k\in \frac{\mathbb{Z}}{L},k\ne0} \int_{I_k^L}
			\big(	\varphi(r,k)-\varphi(r,\hat k)\big)(-2ru^\star(r)) \di\nu_{r}(\hat k)
				\di r\\&+\int_{(\Rin,R_0)\times\R}\varphi\di\mu\,.
		\end{split}
	\end{equation}
By uniform continuity of $\varphi$, for every $\varepsilon>0$ there exists $L_0>1$ such that for all $L\ge L_0$ 
	\begin{equation*}
		|	\varphi(r,k)-\varphi(r,\hat k)|<\varepsilon\quad\forall r\in(\Rin,R_0)\,,\  \forall k\in\frac{\mathbb Z}{L},\ \forall\hat{k}\in I_k^L \,.
	\end{equation*}
Therefore,
	\begin{equation}\label{eq:show2}
		\begin{split}
	\int_{\Rin}^{R_0} 
\sum_{
	k\in \frac{\mathbb{Z}}{L},k\ne0} &\int_{I_k^L}
\big|	\varphi(r,k)-\varphi(r,\hat k)\big| (-2ru^\star(r))\di\nu_{r}(\hat k)
\di r
\le  \mu((\Rin,R_0)\times\R)\varepsilon\,.
		\end{split}
	\end{equation}
	Combining \eqref{eq:show1}, \eqref{eq:show2}, we deduce $$\mu^L\stackrel{*}{\rightharpoonup}\mu\quad  \text{ as } L\to+\infty\,.$$  
	An analogous argument applied to $\mu^L_{,r}$ gives $$\mu_{,r}^L\times\R)\stackrel{*}{\rightharpoonup}\mu_{,r} \quad\text{ as } L\to+\infty\,.$$
	In remains to show \ref{(iii)discr}. For  $\delta>0$ and  $\hat k\in I_k^L$, we have
 $$k^2\le \left(\hat k+\frac1 L\right)^2\le (1+\delta)\hat k^2+ (1+\delta^{-1})\frac{1}{L^2}\,,$$
from which we deduce that
	\begin{equation*}\label{eq:lim_first_term}
		\begin{split}
			\int_{\Rin}^{R_0}
				\sum_{
				k\in \frac{ \mathbb{Z}}{L}} 	\frac{k^2}{r^3} \overline  b^L(r,k)\di r&
			=\int_{\Rin}^{R_0} 
			\sum_{
				k\in \frac{\mathbb{Z}}{L},k\ne0} \int_{I_k^L} \frac{k^2}{r^3} (-2ru^\star(r)) \di\nu_{r}(\hat k)\, \di r\\
			&= \sum_{
				k\in \frac{ \mathbb{Z}}{L},k\ne0} \int_{(\Rin,R_0)\times I_k^L} \frac{k^2}{r^3}\di\mu(r,\hat k)\\
			&\le \sum_{
				k\in \frac{ \mathbb{Z}}{L},k\ne0} \int_{(\Rin,R_0)\times I_k^L}  \frac1{r^3}\Big({(1+\delta)\hat k^2  
			+(1+\delta^{-1})\frac{1}{L^2}}\Big)
			\di\mu(r,\hat k)\\
			&=(1+\delta)	\int_{(\Rin,R_0)\times\R}\frac{\hat k^2}{r^3}\di\mu
			+\mu((\Rin,R_0)\times\R)(1+\delta^{-1})\frac{C}{L^2}\,.
		\end{split}
	\end{equation*}
For the remaining term, we use that $1/|k|\le 1/|\hat k|$ on $I_k^L$ to get
\begin{equation}\label{claim}
	\frac{1}{2 |k|}
	\frac{\partial_r\overline b^L(r,k)}{\overline  b^L(r,k)}=\frac{1}{2 |k|} \strokedint_{I_k^L}   \frac{\di\mu_{,r}}{\di\mu}(r,\hat k)
	\di\nu_{r}(\hat k)
	\le \strokedint_{I_k^L}  \frac{1}{2 |\hat k|} \frac{\di\mu_{,r}}{\di\mu}(r,\hat k)
	\di\nu_{r}(\hat k)\,.
\end{equation}
Then, rewriting 
$$ \frac{(\partial_r\overline  b^L(r,k))^2}{\overline  b^L(r,k)} =	\bigg(\frac{\partial_r\overline b^L(r,k)}{\overline  b^L(r,k)}\bigg)^2\overline  b^L(r,k)\,,$$
	 and combining this with \eqref{claim} and Jensen's inequality, we estimate
	\begin{equation}\label{eq:lim_second_term_1}
		\begin{split}
		&	\int_{\Rin}^{R_0}	\sum_{
			k\in \frac{ \mathbb{Z}}{L},k\ne0} 	\frac{\dot u^\star(r)}{4k^2} \frac{(\partial_r\overline  b^L(r,k))^2}{\overline  b^L(r,k)}r\di r\\
			&\le\int_{\Rin}^{R_0}\sum_{
				k\in \frac{ \mathbb{Z}}{L},k\ne0}\dot u^\star(r)
			\bigg(	\strokedint_{I_k^L}  \frac{1}{2 |\hat k|} \frac{\di\mu_{,r}}{\di\mu}(r,\hat k)
			\di\nu_{r}(\hat k)\bigg)^2
			\nu_{r}(I_k^L)(-2ru^\star(r))r\di r\\
			&\le 
			\int_{\Rin}^{R_0}\sum_{
				k\in \frac{ \mathbb{Z}}{L},k\ne0}\dot u^\star(r)
			\int_{I_k^L}  \frac{1}{4 \hat k^2}		\bigg( \frac{\di\mu_{,r}}{\di\mu}(r,\hat k)\bigg)^2
			\di\nu_{r}(\hat k) (-2ru^\star(r))
		r\di r\\
			&
			=\int_{(\Rin,R_0)\times\R}\frac{\dot u^\star(r)r}{4 \hat k^2}		\bigg( \frac{\di\mu_{,r}}{\di\mu}(r,\hat k)\bigg)^2\di\mu
			\,.
		\end{split}
	\end{equation}
Combining the two bounds yields
	\begin{equation*}
		\limsup_{L\to\infty}	\F_\infty( \mu^L)
		\le (1+\delta) \F_\infty(\mu)\,.
	\end{equation*}
Letting $\delta\to0$ concludes the proof of \ref{(iii)discr}.\\
\end{proof}

\noindent\textbf{Regularisation via mollification and construction of out-of-plane displacement.} 
This step is the most delicate part of the construction. In the next lemma, we improve the regularity of the Fourier coefficients obtained in Lemma~\ref{lem:discretisation}, which in turn allows us to define a smoother out-of-plane displacement.
	A key point is that the constraint depends on the sum of the squares of the coefficients. For this reason, instead of smoothing the coefficients themselves, we smooth their squares. This choice preserves the structure of the constraint and ensures that it is only slightly affected by the regularisation.
	
	Another important issue is the choice of mollifier. We need a kernel whose derivative can be controlled in terms of the kernel itself, which rules out compactly supported kernels. Exponentially decaying kernels, such as $e^{-|r|}$, have the required property and are therefore a natural choice. However, using such kernels forces us to extend the squared coefficients beyond the interval $(\Rin, R_0)$.
	This extension is straightforward for $r > \Rout$, where we simply set the coefficients to zero. For $r < \Rin$, the construction is more delicate: we effectively shift the region where they are active by dilating the interval $(\Rin, (R_0+\Rin)/2)$ into $(\Rin - \sqrt{\varepsilon}, (R_0+\Rin)/2)$, and then suitably extend the coefficients as constants for $r < r_\varepsilon<\Rin$ (with $r_\eps$ suitably chosen). Although the constraint is not satisfied in these auxiliary regions, the error introduced by the convolution is exponentially small as long as we remain sufficiently far from them.
	On the interval of interest $(\Rin, R_0)$, these regions have little influence: the left boundary lies beyond the effective range of the mollifier, while the neighbourhood of $r = R_0$, where the error is not negligible, is confined to a thin layer that can be controlled. 
	
	To restore the constraint exactly, we then rescale the construction by a correcting factor $f^L$. In addition, we introduce a finer cut-off near $R_0$ to better handle boundary effects (see the proof of Proposition~\ref{prop:upb}).
	Although this procedure—first perturbing the constraint through smoothing and then correcting it—may seem somewhat indirect, it is well adapted to the problem. A key advantage is that the correction factor $f^L$ admits an explicit expression in terms of the mollified quantities, which allows us to control both its size and its derivatives, and ultimately ensures the desired regularity.

\begin{lemma}[Construction of  $\xi$]\label{lem:moll}
		Let $\mu\in\mathcal{M}_\infty$ be such that $\F_\infty(\mu)<+\infty$. Let  $\eps=\eps(L)>0$  and $n=n(L)\in \mathbb{N}$ satisfy
		$$\lim_{L\to +\infty}\eps(L)=0\,,\quad
		\lim_{L\to +\infty}n(L)=\lim_{L\to +\infty}\frac L{n(L)}=+\infty\,.$$
	Define $L_0:=L/n(L)$.
	Then there exists $$\hat \xi^L\in \mathcal{A}_L^{\rm out}\cap \mathcal{A}_{L_0}^{\rm out}$$   such that, for every $r\in (\Rin,R_0)$, setting
		$$A^L(r):=\fint_{-\pi L}^{\pi L}\frac{(\partial_\theta\hat{\xi}^L(r,\cdot))^2}{2}\di \theta\,,
	\quad	f^L(r):=\sqrt{\frac{-ru^\star(r)}{A^L(r)}}\,,$$
	the following properties hold.
	
	\medskip 
	\noindent\textbf{(i) Size of the constraint and correction factor.}
			\begin{equation}\label{eq:AL}\begin{split}
\max\{(R_0-r),\eps\}
		\lesssim A^L(r) \lesssim  	\max\{(R_0-r),\eps\}\,,
				\end{split}
		\end{equation}
	\begin{equation}\label{eq:f(x)0}
(	f^L(r))^2\lesssim \frac{(R_0-r)}{\max\{(R_0-r),\eps\}}\,.
	\end{equation}
	Moreover, for every $N\in\mathbb{N}$,
		{ \begin{equation}\label{eq:f(x)}
				\begin{cases}
		(f^L(r))^2= 1+o_L(1)&\text{ if }r\in (\Rin,(\Rin+R_0)/2)\\[1em]
			(	f^L(r))^2= 1+o_N(1)& \text{ if }r\in ((\Rin+R_0)/2,R_0-N\eps)
		\\[1em]
		(f^L(r))^2\le 1&\text{ if }r\in (R_0-N\eps,R_0)
				\end{cases}\,.
		\end{equation}}
		%
		\medskip
		\noindent\textbf{(ii) Estimates on derivatives of $f^L$.}
		\begin{equation}\label{eq:f'(x)-f''(x)}
			\begin{split}
			&	(\dot	f^L(r))^2\lesssim
			\frac{\max\{e^{-\frac {r-R_0}\eps}, e^{-\frac{1}{\sqrt\eps}},\eps{ e^{-\frac{|(R_0+\Rin)/2-r|}{\eps}} }\} }{(R_0-r)\eps}\,, \\[1em]& 
			(	\ddot f^L(r))^2\lesssim
			\frac1{(R_0-r)^3\eps}
			\,.
			\end{split}
		\end{equation}
	\medskip
	\noindent\textbf{(iii) Bounds on $\hat \xi^L$.}
	{	There exists a continuous increasing function $\omega\colon[0,+\infty)\to[0,+\infty)$ with $\omega(0)=0$ such that,}
	{if $a_\eps\to0$, $a_\eps\ge N\eps$, then	\begin{equation}\label{eq:est-u}
		\fint_{-\pi L}^{\pi L}(\hat \xi^L(r,\cdot))^2\di\theta \lesssim
		\begin{cases}
				\max\{(R_0-r),\eps\}	(\omega(2a_\eps)+\frac{a_\eps}{\eps}e^{-\frac{a_\eps}{\eps}})& \text{ if }r\in [R_0-a_\eps,R_0)\\
				R_0-r&\text{ if }r\in (\Rin,R_0-a_\eps)
			\end{cases}	\,,
	\end{equation}}
	%
	\begin{equation}\label{eq:est-u^4}
		\fint_{-\pi L}^{\pi L}(\hat \xi^L(r,\cdot))^4\di\theta \lesssim
		L_0^2
	(\max\{(R_0-r),\eps\})^2
		\,,
	\end{equation}
	\begin{equation}\label{eq:uyy+ux}
		\fint_{-\pi L}^{\pi L}
		\left( (\partial_{\theta\theta}\hat \xi^L(r,\cdot))^2 + (\partial_r\hat \xi^L(r,\cdot))^2\right)
		\di\theta\lesssim\frac1\eps\,,
	\end{equation}
		\begin{equation}\label{eq:uxx-uxy-uxyy}
		\fint_{-\pi L}^{\pi L}\int_{\Rin}^{\Rout}\left((\partial_{r\theta}\hat \xi^L)^2+(\partial_{rr}\hat \xi^L)^2+ 	(\partial_{\theta\theta r}\hat \xi^L)^2\right)r\di r\di\theta
		\lesssim \frac 1{\eps^2}\,,
	\end{equation}
	\begin{equation}\label{eq:ux}
		\fint_{-\pi L}^{\pi L}\int_{\Rin}^{\Rout}(\partial_r\hat \xi^L)^4r\di r\di\theta\lesssim \frac{L_0^2}{\eps^2}\,.
	\end{equation}
	\medskip
	\noindent\textbf{(iv) Convergence and energy estimate.}
				\begin{equation}\label{eq:weak-conv}
				(\mu^L(\hat \xi^L),\mu^L_{,r}(\hat \xi^L))\stackrel{*}{\rightharpoonup}(\mu,\mu_{,r})\quad \text{in }\mathcal M_b((\Rin,\Rout)\times\R)^2\,,
			\end{equation}
			\begin{equation}\label{eq:limsup}
				\limsup_{L\to\infty}
				\fint_{-\pi L}^{\pi L}\int_{\Rin}^{\Rout}\Big(\dot u^\star(\partial_r\hat \xi^L)^2+ \frac{(\partial_{\theta\theta}\hat \xi^L)^2}{r^4}
				\Big)r \di r\di\theta\le \mathcal{F}_\infty(\mu)\,.
			\end{equation}
\medskip
Finally, since $\hat \xi^L$ and all its derivatives are $2\pi L_0$-periodic in $\theta$, all the above estimates remain valid if the averages over $[-\pi L,\pi L]$ are replaced by averages over $[-\pi L_0,\pi L_0]$.
\end{lemma}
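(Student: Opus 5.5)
Starting from Lemma~\ref{lem:discretisation}, applied with $L_0$ in place of $L$, I will take the discrete measures $\mu^{L_0}=\sum_{k\in\mathbb Z/L_0}\bar b^{L_0}(r,k)\mathcal L^1\res(\Rin,R_0)\times\delta_k$. These satisfy the constraint $\sum_k\bar b^{L_0}(r,k)=-2ru^\star(r)$ and $\limsup\F_\infty(\mu^{L_0})\le\F_\infty(\mu)$. Each coefficient $\bar b^{L_0}(\cdot,k)$ will then be extended to all of $\R$. I set it to zero for $r\ge R_0$. For $r<\Rin$, I precompose with an affine dilation mapping $(\Rin-\sqrt\eps,(\Rin+R_0)/2)$ onto $(\Rin,(\Rin+R_0)/2)$ (identity beyond), and extend by a constant for $r<r_\eps:=\Rin-\sqrt\eps$. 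I mollify with the exponential kernel $\rho_\eps(r)=\frac1{2\eps}e^{-|r|/\eps}$, which has the key property $|\dot\rho_\eps|\le\eps^{-1}\rho_\eps$. I then define $b_\eps(r,k):=(\bar b^{L_0}(\cdot,k)*\rho_\eps)(r)$, $a_k^L:=\sqrt{b_\eps(\cdot,k)}/|k|$, and $\hat\xi^L$ by the $\sin/\cos$ series over $k\in\mathbb Z/L_0$. By construction this is $2\pi L_0$-periodic, hence also $2\pi L$-periodic. With these choices, Plancherel \eqref{plancherel-derivatives} gives $2A^L(r)=\sum_k b_\eps(r,k)=(\tilde g*\rho_\eps)(r)$, where $\tilde g$ is the extended version of $-2ru^\star$. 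Everything in (i)–(ii) will therefore reduce to one-dimensional estimates on convolutions of explicit functions.

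For (i), I would use $-ru^\star(r)\simeq R_0-r$ near $R_0$ (from \eqref{ustar}) and note that the convolution of a function comparable to $(R_0-r)_+$ with $\rho_\eps$ is comparable to $\max\{R_0-r,\eps\}$; this gives \eqref{eq:AL} and \eqref{eq:f(x)0}. For \eqref{eq:f(x)}, I would compute $\tilde g*\rho_\eps-\tilde g$ in the three regions. In the bulk the error is $O(\eps)$ plus exponentially small tails, coming from the dilated region at distance $\gtrsim\sqrt\eps$, which gives $e^{-1/\sqrt\eps}$. On $(R_0-N\eps,R_0)$, convexity of $(R_0-r)_+$ gives $\tilde g*\rho_\eps\ge\tilde g$, so $(f^L)^2\le1$. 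Near $R_0$ the relative error is $O(e^{-N})$. For (ii), I would differentiate $f^L=\sqrt{-ru^\star/A^L}$ and write $\dot A^L=\tilde g*\dot\rho_\eps$. The difference $\dot A^L-\frac{d}{dr}(-ru^\star)$ is controlled by the same three error sources, namely the kink at $R_0$, the dilation boundary, and the point $(\Rin+R_0)/2$ where the dilation map has a corner. This should produce the three exponentials in \eqref{eq:f'(x)-f''(x)}. The second derivative would be treated crudely with $|\ddot\rho_\eps|\lesssim\eps^{-2}\rho_\eps$ and with Lemma~\ref{lem:1dim-sol}(v).

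For (iii), I would use Plancherel term by term. For \eqref{eq:uyy+ux}, $\sum_k k^2 b_\eps$ is bounded by $\F_\infty(\mu^{L_0})$-type quantities mollified, and since $\partial_r\sqrt{b_\eps}=\partial_r b_\eps/(2\sqrt{b_\eps})$ with $|\partial_r b_\eps|\le\eps^{-1}b_\eps$, we get $\sum_k(\partial_r\sqrt{b_\eps})^2/k^2\lesssim\eps^{-2}\sum_k b_\eps/k^2$. I need the bound $1/\eps$, so I would instead use Jensen: $(\partial_r b_\eps)^2/b_\eps\le(\frac{(\partial_r\bar b)^2}{\bar b})*\rho_\eps$, which is integrable. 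Then $1/\eps$ is obtained pointwise from $\|\rho_\eps\|_\infty\lesssim1/\eps$. The second-order bounds \eqref{eq:uxx-uxy-uxyy} come from one further $\eps^{-1}$ via \eqref{derivatives-a}. For the $L^4$ bounds \eqref{eq:est-u^4} and \eqref{eq:ux}, I would use the fact that on $[-\pi L_0,\pi L_0]$ an $L^\infty$ bound is at most $\sqrt{L_0}$ times an $L^2$ bound for functions with a $\theta$-derivative in $L^2$ (Sobolev on a period of length $L_0$). For \eqref{eq:est-u}, I would split frequencies at $|k|\le\delta$ versus $|k|>\delta$, using $\sum b_\eps/k^2$. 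Low frequencies are controlled by the modulus $\omega$, namely the $\mu$-mass near $k=0$ in the region close to $R_0$, where the mollified mass from $[R_0-2a_\eps,R_0]$ enters. The term $\frac{a_\eps}\eps e^{-a_\eps/\eps}$ is the tail of $\rho_\eps$ leaking from farther away.

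For (iv), weak-$*$ convergence follows from Lemma~\ref{lem:discretisation}(ii), since mollification at scale $\eps\to0$ and the dilation perturb the measures by $o(1)$ against test functions. For \eqref{eq:limsup}, the Benamou–Brenier-type integrand $(x,y)\mapsto y^2/x$ is jointly convex and $1$-homogeneous, so Jensen gives $\frac{(\partial_r b_\eps)^2}{b_\eps}\le\big(\frac{(\partial_r\bar b)^2}{\bar b}\big)*\rho_\eps$ and $k^2b_\eps=(k^2\bar b)*\rho_\eps$. Integrating against the continuous weights $\dot u^\star r/(4k^2)$ and $r^{-3}$ costs only $1+o(1)$. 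The region $r<\Rin$ lies outside the integral, and the dilation changes energies by a factor $1+O(\sqrt\eps)$. Finally, Lemma~\ref{lem:discretisation}(iii) concludes the argument.

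I expect the main obstacle to be the derivative estimates on $f^L$ in (ii) and the boundary-layer bound \eqref{eq:est-u}. These require tracking precisely how the extension and the kink at $R_0$ interact with the exponential kernel, and I would first verify everything on the explicit model $\tilde g=c(R_0-r)_+$.
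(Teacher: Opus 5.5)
Your overall construction is the paper's: discretise at scale $L_0$, dilate near $\Rin$, extend by zero beyond $R_0$ and by a constant on the left, mollify the squared coefficients with $\rho_\eps$, use Jensen for $(x,y)\mapsto y^2/x$, and use mean-zero Poincar\'e on one period for the $L^4$ bounds. However, your argument for \eqref{eq:est-u} does not work.

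\textbf{The argument for \eqref{eq:est-u}.} A frequency split cannot produce the vanishing factor. The modes $|k|>K$ contribute at most $K^{-2}\sum_k b_\eps(r,k)=2K^{-2}A^L(r)\lesssim K^{-2}\max\{R_0-r,\eps\}$, which is small relative to $\max\{R_0-r,\eps\}$ only if $K\to\infty$. The modes $|k|\le K$ carry the weight $1/k^2$ with $|k|$ as small as $1/L_0$, so no information on the unweighted $\mu$-mass near $k=0$ controls them. Wirtinger alone gives $\fint(\hat\xi^L)^2\le L_0^2\sum_k b_\eps$, which loses $L_0^2$. This loss also propagates to \eqref{eq:est-u^4}, which needs $\fint(\hat\xi^L)^2\lesssim\max\{R_0-r,\eps\}$.

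What must be shown is that near $R_0$ every bounded frequency range carries only $o(R_0-r)$ of the constraint. This comes from the stretching energy combined with the vanishing of every coefficient at $R_0$; a single mode $b=c(R_0-r)$ already has $\int(\partial_r b)^2/b=+\infty$. The missing ingredient is the Hardy-type inequality obtained from the fundamental theorem of calculus and Cauchy--Schwarz, using $\bar b^{L_0}(R_0,k)=0$ for the extended coefficients:
\[
\bar b^{L_0}(z,k)=\Big(\int_z^{R_0}\partial_r\sqrt{\bar b^{L_0}(\hat z,k)}\,{\rm d}\hat z\Big)^2\le (R_0-z)\int_z^{R_0}\frac{(\partial_r \bar b^{L_0}(\hat z,k))^2}{4\,\bar b^{L_0}(\hat z,k)}\,{\rm d}\hat z .
\]
Dividing by $k^2$ and summing gives $\sum_{k\ne0}\bar b^{L_0}(z,k)/k^2\le(R_0-z)\,\omega(R_0-z)$. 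Here $\omega(t)$ is the stretching part of the energy of the dilated, discretised measure on $(R_0-t,R_0)\times\R$. By the Jensen argument of \eqref{eq:lim_second_term_1} it is controlled by the same quantity for $\mu$, so $\omega\lesssim\F_\infty(\mu)$ and $\omega(t)\to0$ as $t\to0$. Only with this bound does your split of the convolution in $z$ (the part from $(R_0-a_\eps,R_0)$ versus the exponential tail) give both cases of \eqref{eq:est-u}.

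\textbf{The constant extension on the left.} Since $\rho_\eps$ has unbounded support, the constant used on $(-\infty,r_\eps)$ enters $\fint(\partial_{\theta\theta}\hat\xi^L)^2=\sum_k k^2 b_\eps(r,k)$ at every $r$, with weight $\tfrac12e^{-(r-r_\eps)/\eps}$. Your bound via $\|\rho_\eps\|_\infty\int\sum_k k^2\bar b^{L_0}\,{\rm d}r$ does not cover a half-line. With your choice this constant is the trace $\bar b^{L_0}(\Rin,k)$, and $\sum_k k^2\bar b^{L_0}(\Rin,k)$ is not controlled by $\F_\infty(\mu^{L_0})$. The bending density is only integrable in $r$, the trace sum can be $+\infty$, and then $\hat\xi^L\notin\mathcal A^{\rm out}_L$. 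The paper instead freezes the coefficients at a slice $\lambda\in(\Rin,\Rin+\sqrt\eps/2)$ chosen by the mean value theorem, see \eqref{ub:1}, so that $\sum_k k^2\bar b^{L_0}(\lambda,k)\lesssim\eps^{-1/2}\F_\infty(\mu^{L_0})$. This is what makes \eqref{eq:uyy+ux}, \eqref{eq:uxx-uxy-uxyy} and \eqref{eq:limsup} go through.

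\textbf{The bound on $\ddot f^L$.} The inequality $|\ddot\rho_\eps|\lesssim\eps^{-2}\rho_\eps$ is false, because $\ddot\rho_\eps=\eps^{-2}(\rho_\eps-\delta_0)$. Even ignoring the Dirac mass, the resulting $|\ddot A^L|\lesssim\eps^{-2}A^L$ gives only $(\ddot f^L)^2\lesssim\eps^{-4}$ away from $R_0$, instead of $(R_0-r)^{-3}\eps^{-1}$. Use the identity instead: $\ddot A^L=\tfrac{1}{2}\eps^{-2}(\tilde g*\rho_\eps-\tilde g)$ is $\eps^{-2}$ times the mollification error computed in (i). That error is $O(\eps^2)$ plus $O(\eps\, e^{-{\rm dist}/\eps})$ from the kinks of $\tilde g$ at $r_\eps$, at $R_0$, and at $(\Rin+R_0)/2$ (the last with an extra factor $\sqrt\eps$). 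Hence $\ddot A^L=O(1)+O(\eps^{-1}e^{-{\rm dist}/\eps})$.

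\textbf{The case $(f^L)^2\le1$ near $R_0$.} The function $-2ru^\star$ is concave on $(\Rin,R_0)$, so $(f^L)^2\le1$ on $(R_0-N\eps,R_0)$ does not follow from convexity. It holds because the kink term $\simeq\eps e^{-N}$ at $R_0$ dominates the $O(\eps^2)$ concavity defect once $\eps$ is small.
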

\begin{proof}
		Let $\mu\in\mathcal{M}_\infty$, $\eps=\eps(L)$, $n=n(L)$ and $L_0$ be as in the statement.

	We  construct  $\hat \xi^L\in \mathcal A_{L_0}^{\rm out}$ and then we extend it periodically in  $(\Rin,\Rout)\times[-\pi L,\pi L]$, without relabelling it.  
Let $(\mu^{L_0})\subset \mathcal{M}_\infty$ be the sequence of discrete measures provided by Lemma \ref{lem:discretisation} for the parameter $L_0$
	$$	\mu^{L_0}=\sum_{
		k\in \frac{\mathbb{Z}}{L_0}} \overline b^{L_0}(r,k)  \L^1\res{(\Rin,R_0)}\times\delta_k\,.$$ 
By the mean value theorem, for each $\eps$, we can find $\lambda=\lambda(\eps)\in(\Rin,\Rin+\sqrt\eps/2)$ such that
	\begin{equation}\label{ub:1}
		\sum_{
			k\in \frac{\mathbb{Z}}{L_0}} \frac{k^2}{{\lambda}^3} \overline b^{L_0}({\lambda},k)\le \fint_{\Rin}^{\Rin+\sqrt\eps/2} \sum_{
			k\in \frac{\mathbb{Z}}{L_0}}\frac{k^2}{r^3} \overline b^{L_0}(r,k)\di r\,.
	\end{equation}
Let $M_0:=(\Rin+R_0)/2$ and let { $\ell_\eps\colon[\Rin,M_0 ]\to [\Rin-\sqrt\eps,M_0 ]$ be the linear dilation function such that $$\ell_\eps(\Rin)=\Rin-\sqrt\eps\quad \text{and}\quad \ell_\eps(M_0)=M_0\,,$$
	namely
	\begin{equation*}
		\ell_\eps(r):=m_\eps\left(r- M_0\right)+M_0\,,\quad m_\eps:=\frac{{R_0}-\Rin+2\sqrt\eps}{{R_0}-\Rin}\searrow1\,.
	\end{equation*}
} Observe that $r_\eps:=\ell_\eps(\lambda)<\Rin.$
We define also $g_\eps\colon [\Rin,R_0]\to [\Rin,R_0-\sqrt\eps]$ to be 
\begin{equation*}
	g_\eps(r):=\begin{cases}
	\ell_\eps(r)& \text{ if }r\in[\Rin,M_0]\\[1em]
	r&\text{ if }r\in [M_0,R_0]
	\end{cases}\,.
\end{equation*}
Then we define $ b^{L_0}\colon\R\times\frac{\mathbb{Z}}{L_0}\to \R$ as 
	\begin{equation}\label{def:b}
		b^{L_0}(r,k):=\begin{cases}\bar b^{L_0}(\lambda,k)&\text{if }r\le r_\eps\\[1em]
		\bar b^{L_0}(g_\eps^{-1}(r),k)	&r_\eps<r<R_0   \\[1em]
		0&\text{if }r\ge R_0
		\end{cases}\,.
	\end{equation}
This operation allows to mollify the coefficients without changing the constraint too much. 
 Let now $\rho(t)=\frac12 e^{-|t|}$ and $\rho_\eps(r):=\frac1\eps\rho(\frac r\eps)=\frac{1}{2\eps}e^{\frac{-|r|}{\eps}}$ 
and note that in particular 
	\begin{equation}\label{eq:prop-moll}
		|\dot\rho_\eps(r)|= \frac1\eps\rho_\eps(r)\,.
	\end{equation}
	Finally we let   $ a^{L}\colon\R\times\frac{\mathbb{Z}}{L_0}\to \R$ be defined as
	\begin{equation*}
		a^{L}(r,k):= (b^{L_0}(\cdot,k)*\rho_\eps)(r)\,,
	\end{equation*}
	and   $\hat \xi^L\in\mathcal{A}_{L_0}^{\rm out}$  be the  function 
	\begin{equation*}
	\hat	\xi^{L}(r,\theta):= \sum_ {k\in \frac{ \mathbb{Z}}{L_0}, k>0 } \frac{\sqrt{ a^{L}(r,k)}}{k}\sqrt2\sin(k\theta) + \sum_ {k\in \frac{ \mathbb{Z}}{L_0}, k < 0 }  \frac{\sqrt{ a^{L}(r,k)}}{k}\sqrt2\cos(k\theta)\,.
	\end{equation*}
Since $L=n(L)L_0$ it follows  $	\hat	\xi^{L}(r,\cdot)$ is $2\pi L$-periodic and thus  $\hat \xi^L\in\mathcal{A}_{L}^{\rm out}$. 
We next organize the proof into a number of steps. 

\noindent
\textit{Step 1:} in this step we show \eqref{eq:AL}--\eqref{eq:f(x)}.
Since $-ru^\star(r)=-\Tin\Rin r\log\frac r{R_0}$ in  $(\Rin,R_0)$, an easy calculation shows
\begin{equation}\label{linear-growth-constr}
(R_0-r)\lesssim -ru^\star(r)\lesssim (R_0-r)\quad \text{ in }(\Rin,R_0)\,.
\end{equation}
Next,  from \eqref{plancherel-derivatives} and  \eqref{eq:constraint} we have 
\begin{equation}\label{est:A}
	\begin{split}
		A^L(r)&=\frac12\fint_{-\pi L}^{\pi L}(\partial_\theta\hat{\xi})^2\di\theta= \frac12
\fint_{-\pi L_0}^{\pi L_0}(\partial_\theta\hat{\xi})^2\di\theta
\\&	= \frac12
	\sum_{k\in \frac{ \mathbb{Z}}{L_0}}{a}^{L}(r,k)
= \frac12\Big(\sum_{k\in \frac{ \mathbb{Z}}{L_0}}{b}^{L_0}(\cdot,k)\Big)*\rho_\eps(r)\\
&= -\big(g_\eps^{-1}(r)u^\star(g_\eps^{-1}(r))\chi_{(r_\eps,R_0)}+\lambda u^\star(\lambda)\chi_{(-\infty,r_\eps)}\big)*\rho_\eps(r)
\,.
	\end{split}
\end{equation}
We next prove that in $(\Rin,R_0)$
\begin{equation}\label{est:Abis}
	\begin{split}
	 -\big(g_\eps^{-1}(r)u^\star(g_\eps^{-1}(r))\chi_{(r_\eps,R_0)}+\lambda& u^\star(\lambda)\chi_{(-\infty,r_\eps)}\big)*\rho_\eps(r)
		\\&=
		-g_\eps^{-1}(r)u^\star(g_\eps^{-1}(r))\chi_{(\Rin,R_0)}+\mathcal R_\eps(r)
		\,,
	\end{split}
\end{equation}
where 
\begin{equation*}\begin{split}
	 \mathcal{R}_\eps(r)&:= 
-\frac{\eps}{2}
\Bigg[- \Tin\Rin e^{\frac{|r-R_0|}{\eps}}
+m_\eps^{-1}(\Tin\Rin+ u^\star(\lambda))e^{-\frac{|r-r_\eps|}{\eps}} \\&
+ 
(1-m_\eps^{-1})(\Tin\Rin+u^\star(M_0))e^{-\frac{|r-M_0|}{\eps}}
+\Tin\Rin \int_{\lambda}^{R_0}\frac1te^{-\frac{|r-g_\eps(t)|}{\eps}}(\dot g_\eps(t))^{-1}\dt\Bigg]
\,.
	\end{split}
\end{equation*}
{
More precisely, we have 
\begin{equation}\label{const1}
	\begin{split}
	-	\lambda	u^\star(\lambda)\chi_{(-\infty,r_\eps)}*\rho_\eps(r)= 
		&-\frac{1}{2\eps}\lambda  u^\star(\lambda)\int_{-\infty}^{r_\eps} e^{\frac{s-r}{\eps}}\ds
		=-\frac12 \lambda  u^\star(\lambda)e^{\frac{r_\eps-r}{\eps}}\,;
	\end{split}
\end{equation}
and
\begin{equation}\label{const2}
	\begin{split}
		-	g_\eps^{-1}(r)u^\star(	g_\eps^{-1}(r))\chi_{( r_\eps,R_0)}*\rho_\eps(r)
		&=-\frac1{2\eps}\int_{r_\eps}^{R_0}u^\star(g_\eps^{-1}(s))g_\eps^{-1}(s)e^{-\frac{|r-s|}{\eps}}\ds
		\\
		&=-\frac1{2\eps}\int_{\lambda}^{R_0}u^\star(t)t e^{-\frac{|r-g_\eps(t)|}{\eps}}\dot g_\eps(t)\dt
		\\
		&=- \frac{m_\eps}{2\eps}\int_{\lambda}^{M_0}u^\star(t)t e^{-\frac{|r-\ell_\eps(t)|}{\eps}}\dt 
		- \frac1{2\eps}\int_{M_0}^{R_0}u^\star(t)t e^{-\frac{|r-t|}{\eps}}\dt\,.
	\end{split}
\end{equation}
Now we distinguish between $r\in (\Rin,M_0)$ and $r\in (M_0,R_0)$.
 If $r\in (\Rin, M_0)$, integrating by parts  and using that $\dot u^\star(t)=C/t$ with $C=\Tin\Rin $ we get
\begin{equation}\label{const3}
	\begin{split}
- \frac{m_\eps}{2\eps}\int_{\lambda}^{M_0}&u^\star(t)t e^{-\frac{|r-\ell_\eps(t)|}{\eps}}\dt 	- \frac1{2\eps}\int_{M_0}^{R_0}u^\star(t)t e^{-\frac{|r-t|}{\eps}}\dt
		=\\&
	-	\frac{m_\eps}{2\eps}\int_{\lambda}^{\ell_\eps^{-1}(r)}u^\star(t)te^{\frac{\ell_\eps(t)-r}{\eps}}\dt
		-\frac{m_\eps}{2\eps}\int_{\ell_\eps^{-1}(r)}^{M_0}u^\star(t)te^{\frac{r-\ell_\eps(t)}{\eps}}\dt 	- \frac1{2\eps}\int_{M_0}^{R_0}u^\star(t)t e^{\frac{r-t}{\eps}}\dt
		\\
		&=-\ell_\eps^{-1}(r)u^\star(\ell_\eps^{-1}(r))+ \frac12u^\star(\lambda)(\lambda-m_\eps^{-1}\eps)e^{\frac{r_\eps-r}{\eps}}-\frac12u^\star(M_0)(1-m_\eps^{-1})\eps e^{\frac{r-M_0}{\eps}}
		\\
		&+ \frac12\int_{\lambda}^{\ell_\eps^{-1}(r)}\dot u^\star(t)(t-m_\eps^{-1}\eps)e^{\frac{\ell_\eps(t)-r}{\eps}}\dt
		- \frac12\int_{\ell_\eps^{-1}(r)}^{M_0}\dot u^\star(t)(t+m_\eps^{-1}\eps)e^{\frac{r-\ell_\eps(t)}{\eps}}\dt\\&-\frac12\int_{M_0}^{R_0}\dot u^\star(t)(t+\eps)e^{\frac{r-t}{\eps}}\dt
		\\
		&= -\ell_\eps^{-1}(r)u^\star(\ell_\eps^{-1}(r))+ \frac12u^\star(\lambda)(\lambda-m_\eps^{-1}\eps)e^{\frac{r_\eps-r}{\eps}}-\frac12u^\star(M_0)(1-m_\eps^{-1})\eps e^{\frac{r-M_0}{\eps}}\\
		&-\frac12 C\eps\left(m_\eps^{-1}e^{\frac{r_\eps-r}{\eps}}-e^{\frac{r-R_0}{\eps}}+(1-m_\eps^{-1})e^{\frac{r-M_0}{\eps}}
		+ \int_{\lambda}^{R_0}\frac1te^{-\frac{|g_\eps(t)-r|}{\eps}}(\dot g_\eps(t))^{-1}\dt
	\right)
		\,.
	\end{split}
\end{equation} 
\noindent
If $r\in (M_0,R_0)$ in a similar manner we get 
\begin{equation}\label{const4}
	\begin{split}
		- \frac{m_\eps}{2\eps}\int_{\bar\lambda}^{M_0}&u^\star(t)t e^{-\frac{|r-\ell_\eps(t)|}{\eps}}\dt 	- \frac1{2\eps}\int_{M_0}^{R_0}u^\star(t)t e^{-\frac{|r-t|}{\eps}}\dt
		\\
		&= -ru^\star(r)+ \frac12u^\star(\lambda)(\lambda-m_\eps^{-1}\eps)e^{\frac{r_\eps-r}{\eps}}-\frac12u^\star(M_0)(1-m_\eps^{-1})\eps e^{\frac{M_0-r}{\eps}}\\
		&-\frac12 C\eps\left(m_\eps^{-1}e^{\frac{r_\eps-r}{\eps}}-e^{\frac{r-R_0}{\eps}}+(1-m_\eps^{-1})e^{\frac{M_0-r}{\eps}}
			+ \int_{\lambda}^{R_0}\frac1te^{-\frac{|g_\eps(t)-r|}{\eps}}(\dot g_\eps(t))^{-1}\dt
		\right)
		\,.
	\end{split}
\end{equation} 
}

Therefore, by combining \eqref{const1}--\eqref{const4} we deduce \eqref{est:Abis}. As a consequence we have that
\begin{equation*}
	\mathcal{R}_\eps(r)\to0 \quad\text{ and }\quad A^L(r)\to-2ru^\star(r)\chi_{(\Rin,R_0)}\,.
\end{equation*}
{ Furthermore
\begin{equation*}
|\mathcal R_\eps(r)|\lesssim \eps\max\left\{e^{-\frac{|r-r_\eps|}{\eps}}, e^{-\frac{|r-R_0|}{\eps}},\sqrt\eps  e^{-\frac{|r-M_0|}{\eps}}
\right\}\,,
\end{equation*}
and, since $u^\star<0$ in $(\Rin,R_0)$
\begin{equation*}
	\mathcal{R}_\eps(r)\gtrsim \eps \left(
	e^{-\frac{|r-R_0|}{\eps}} -e^{-\frac{|r-r_\eps|}{\eps}}
	-\sqrt\eps   e^{-\frac{|r-M_0|}{\eps}}
	\right)\,.
\end{equation*}
}
From this we deduce
\begin{equation*}
\begin{split}
	A^L(r)\le |A^L(r)|&\le -ru^\star(r)\chi_{(M_0,R_0)}-\ell_\eps^{-1}(r)u^\star(\ell_\eps^{-1}(r))\chi_{(\Rin,M_0)}+|\mathcal R_\eps(r)|\\
	&\lesssim (R_0-r)\chi_{(M_0,R_0)} + (R_0-\ell_\eps^{-1}(r))\chi_{(\Rin,M_0)}+\eps\\&
\lesssim \max\{(R_0-r),\eps\}\,.
\end{split}
\end{equation*}
For the lower bound in \eqref{eq:AL} we have
\begin{equation*}
	\begin{split}
		A^L(r)&\gtrsim (R_0-r)\chi_{(M_0, R_0)} + (R_0-\ell_\eps^{-1}(r))\chi_{(\Rin,M_0)}+
		\mathcal R_\eps(r) \\&\gtrsim (R_0-r)+\mathcal{R}_\eps(r)\,.
	\end{split}
\end{equation*}
Next we distinguish the following cases:
 If $r\in (R_0-\eps,R_0)$
\begin{equation*}
	\begin{split}
	A^L(r) \gtrsim (R_0-r)+ \eps \left(e^{-1}-e^\frac{r_\eps-R_0+\eps}{\eps}-\sqrt\eps
		\right)\gtrsim \eps\gtrsim \max\{(R_0-r),\eps\}\,;
	\end{split}
\end{equation*}
 If $r\in (\Rin, R_0-\eps)$
\begin{equation*}
	\begin{split}
	A^L(r)& \gtrsim (R_0-r)+  \eps \left(e^{-\frac{\Rin-R_0}{\eps}}-e^{-\frac{1}{\sqrt\eps}}-\sqrt\eps 
		\right)
		\\&\gtrsim (R_0-r)+O(\eps\sqrt\eps)\gtrsim (R_0-r)
\gtrsim \max\{(R_0-r),\eps\}\,.
	\end{split}
\end{equation*}
 {Then \eqref{eq:AL} and \eqref{eq:f(x)0} readily follow.}
{Now we prove \eqref{eq:f(x)}. 
In $(\Rin,M_0)$ a straightforward computation gives
	\begin{equation*}
		(f^L(r))^2=1+\frac{-ru^\star(r)+\ell_\eps^{-1}(r)u^\star(\ell_\eps^{-1}(r))-\mathcal{R}_\eps(r)}{A^L(r)}
		=1+o_L(1)\,,
	\end{equation*}
{where the last equality follows from $|-ru^\star(r)+\ell_\eps^{-1}(r)u^\star(\ell_\eps^{-1}(r))|+|\mathcal R_\eps(r)|\lesssim \sqrt\eps$ and \eqref{eq:AL}.}
If $r\in (M_0 ,R_0-N\eps)$ 	for $N\in \mathbb N$ fixed it holds 
	\begin{equation*}
		\begin{split}
			(f^L(r))^2&=\frac{-ru^\star(r)}{A^L(r)}
			=1- \frac{\mathcal R_\eps(r)}{A^L(r)}=1+o_N(1)
			\,,
		\end{split}
	\end{equation*}
where the last equality follows from
	\begin{equation*}
		\left|\frac{\mathcal R_\eps(r)}{A^L(r)}\right|= 
		\frac{\left|\mathcal R_\eps(r)\right|}{A^L(r)}\lesssim \frac{\eps
			\max\left\{e^{-\frac{|r-r_\eps|}{\eps}}, e^{-\frac{|r-R_0|}{\eps}},{\sqrt\eps}
			\right\}
		}{\max\{(R_0-r),\eps\}}\lesssim \frac{\eps e^{-N}}{N\eps}\lesssim\frac{e^{-N}}{N}=o_N(1)\,.
	\end{equation*}
Finally,	if $r\in (R_0-N\eps,R_0)$ we have 
\begin{equation*}
	\mathcal R_\eps(r)\gtrsim \eps\left(
	e^{-N}-e^{\frac{r_\eps-R_0}{\eps}} -\sqrt\eps	\right)
	\gtrsim \eps e^{-N}\ge0\,,
\end{equation*}
which implies
\begin{equation*}
	(f^L(r))^2=\frac{-ru^\star(r)}{-ru^\star(r)+\mathcal R_\eps(r)
	}\le 1
	\,.
\end{equation*}
}

%
%

\noindent
\textit{Step 2:} in this step we show \eqref{eq:f'(x)-f''(x)}. A direct computation shows that 
\begin{equation*}
\begin{split}
(\dot f^L(r))^2&=\frac{\left[ \frac{\di}{\di r}{(-ru^\star(r))} A^L(r)+ru^\star(r)\dot A^L(r)\right]^2}{-4ru^\star(r)(A^L(r))^3}
\\ 
&\lesssim
\frac{\left[ \frac{\di}{\di r}{(-ru^\star(r))} \ell_\eps^{-1}(r)u^\star(\ell_\eps^{-1}(r))+ru^\star(r) \frac{\di}{\di r}{(\ell_\eps^{-1}(r)u^\star(\ell_\eps^{-1}(r)))}
	\right]^2}{-4ru^\star(r)(A^L(r))^3}\chi_{(\Rin,M_0)}\\&
+
\frac{\left( \frac{\di}{\di r}{(-ru^\star(r))} \mathcal R_\eps(r)+ru^\star(r)\dot{ \mathcal R}_\eps(r)\right)^2}{-4ru^\star(r)(A^L(r))^3}
\\
&\lesssim { \frac{\eps}{(R_0-r)^4}\chi_{(\Rin,M_0)}  }+
\frac{(\max\{R_0-r,\eps\})^2 \max\{e^{\frac {r-R_0}\eps}, e^{\frac{r_\eps-r}{\eps}},\eps {e^{-\frac{|M_0-r|}{\eps}} }\}}{(R_0-r)(\max\{R_0-r,\eps\})^3} 
\\&\lesssim \frac{\max\{e^{\frac {r-R_0}\eps}, e^{-\frac{1}{\sqrt\eps}},\eps { e^{-\frac{|M_0-r|}{\eps}} } \} }{(R_0-r)\eps}
\,.
\end{split}
\end{equation*}
%
Furthermore, by Young's inequality and using \eqref{linear-growth-constr}, \eqref{eq:AL} and that
\begin{equation*}
	\Big|\frac{\di}{\di r}(ru^\star(r))\Big|
	+\Big| \frac{\di^2}{\di r^2}(ru^\star(r))\Big|\lesssim 1\,,
	\end{equation*}
\begin{equation*}
	  |\dot A^L(r)|\lesssim 1\,,\quad |\ddot A^L(r)|\lesssim \eps^{-1}\max\left\{e^{-\frac{r_\eps-r}{\eps}}, e^{-\frac{r-R_0}{\eps}},\sqrt\eps
	\right\}\,,
\end{equation*}
 we have
\begin{equation*}
\begin{split}
(\ddot f^L(r))^2&\lesssim 
\frac{1}{(R_0-r)A^L(r)}+
\frac{(R_0-r)(\ddot A^L(r))^2}{(A^L(r))^3}\\&+ \frac{1}{(R_0-r)^3A^L(r)}+ \frac{(\dot A^L(r))^2}{(R_0-r)(A^L(r))^3}+ \frac{(R_0-r)(\dot A^L(r))^4}{(A^L(r))^5}\\
&\lesssim\frac{1}{(R_0-r)\max\{R_0-r,\eps\}}+
 \frac{(R_0-r)\eps^{-2}\max\left\{e^{-\frac{r_\eps-r}{\eps}}, e^{-\frac{r-R_0}{\eps}},\eps
 	\right\}}{(\max\{R_0-r,\eps\})^3}
 \\
 &+
\frac1{(R_0-r)^3\max\{R_0-r,\eps\}}+ \frac1{(R_0-r)(\max\{R_0-r,\eps\})^3}+ \frac{R_0-r}{(\max\{R_0-r,\eps\})^5}\,.
\end{split}
\end{equation*}
Hence 
\begin{equation*}
(\ddot f^L(r))^2\lesssim \frac{1}{(R_0-r)\eps}+ \frac{1}{\eps^4}+ \frac1{(R_0-r)^3\eps} + \frac{1}{(R_0-r)\eps^3}\lesssim \frac1{(R_0-r)^3\eps}\quad \text{ if }r\in(R_0-\eps,R_0)\,,
\end{equation*}
and 
\begin{equation*}
(\ddot f^L(r))^2\lesssim \frac{1}{(R_0-r)^2}+
 \frac{\eps}{(R_0-r)^2\eps^2}+ \frac1{(R_0-r)^4}\lesssim \frac1{(R_0-r)^3\eps}\quad \text{ if }r\in(\Rin,R_0-\eps)\,.
\end{equation*}

 \noindent 
\textit{Step 3:} in this step we show \eqref{eq:est-u}. By \eqref{eq:plancherel-u} it holds
\begin{equation}\label{eq:u^2-1}
	\begin{split}
		\fint_{-\pi L}^{\pi L}(\hat \xi^L(r,\cdot))^2\di r=
		\fint_{-\pi L_0}^{\pi L_0}(\hat \xi^L(r,\cdot))^2\di r&=\sum_ {k\in \frac{\mathbb{Z}}{L_0},k\ne 0 }  \frac{{ a^{L}(r,k)}}{k^2} \\&= 
		\bigg(\sum_ {k\in \frac{ \mathbb{Z}}{L_0},k\ne 0 }  \frac{{ b^{L_0}(\cdot,k)}}{k^2} \bigg) *\rho_\eps(r) \\
		&
		=\int_{-\infty}^{R_0} \bigg(\sum_ {k\in \frac{ \mathbb{Z}}{L_0},k\ne 0 }  \frac{{ b^{L_0}(z,k)}}{k^2} \bigg)\rho_\eps(r-z)\dz
		\,.
	\end{split}
\end{equation}
Combining the fundamental theorem of calculus and Hölder's inequality it holds
\begin{equation}\label{eq:u^2-2}
	b^{L_0}(z,k)=
	\left(\sqrt{	b^{L_0}(z,k) }\right)^2=
	\left(\int_z^{R_0}\frac{\partial_rb^{L_0}(\hat z,k)}{2\sqrt{b^{L_0}(\hat z,k)}}\,{\rm d}\hat z\right)^2
	\le (R_0-z)  \int_z^{R_0} \frac{\partial_r(b^{L_0}(\hat z,k))^2}{4{b^{L_0}(\hat z,k)}}\,{\rm d}\hat z
	\,.
\end{equation}
From \eqref{eq:u^2-1} and \eqref{eq:u^2-2} it follows
\begin{equation*}
	\begin{split}
		\fint_{-\pi L}^{\pi L}(\hat \xi^L(r,\cdot))^2\di \theta&\le  \int_{-\infty}^{R_0}
		\bigg(
		\sum_ {k\in \frac{ \mathbb{Z}}{L_0},k\ne 0 } 
		\int_z^{R_0} \frac{(\partial_rb^{L_0}(\hat z,k))^2}{4k^2{b^{L_0}(\hat z,k)}}\,{\rm d}\hat z
		\bigg)(R_0-z)\rho_\eps(r-z)
		\dz\,.
	\end{split}
\end{equation*}
Now we recall the definition of $b^{L_0}$ \eqref{def:b}
\begin{equation*}\label{eq:u^2-3}
	\begin{split}
		\sum_ {k\in \frac{ \mathbb{Z}}{L_0},k\ne 0 } 	\int_z^{R_0} \frac{(\partial_rb^{L_0}(\hat z,k))^2}{4k^2{b^{L_0}(\hat z,k)}}\,{\rm d}\hat z &= 	\sum_ {k\in \frac{ \mathbb{Z}}{L_0},k\ne 0 } 
		\int_{z\vee r_\eps}^{R_0} \frac{(\partial_r\bar b^{L_0}(g_\eps^{-1}(\hat z),k))^2}{4k^2{\bar b^{L_0}(g_\eps^{-1}(\hat z),k)}}\,{\rm d}\hat z\\
		&\le m_\eps	\sum_ {k\in \frac{ \mathbb{Z}}{L_0},k\ne 0 } 
		\int_{g_\eps^{-1}(z)\vee\lambda}^{R_0} \frac{(\partial_r\bar b^{L_0}(t,k))^2}{4k^2{\bar b^{L_0}(t,k)}}\dt\\
		&
		\le m_\eps
		\int_{(g_\eps^{-1}(z)\vee\lambda,R_0)\times\R}\frac{1}{4  k^2}		\bigg( \frac{\di\mu^{L_0}_{,r}}{\di\mu^{L_0}}\bigg)^2\di\mu^{L_0}=:\omega(R_0-z)
		\,,
	\end{split}
\end{equation*}
where the last inequality follows by adapting the argument in \eqref{eq:lim_second_term_1}.  
Therefore we deduce that 
\begin{equation*}
	\fint_{-\pi L_0}^{\pi L_0}(\hat \xi^L(r,\cdot))^2\di \theta
	\le 
	\int_{-\infty}^{R_0}\omega(R_0-z) (R_0-z)\rho_\eps(r-z)\dz\,.
\end{equation*}
Note that  $\omega(t)\to 0$ as $t\to 0$, $\omega(R_0-z)\le \omega(R_0-r_\eps)\lesssim\mathcal{F}_\infty(\mu)\le C$ and $\omega$ non-increasing function.
Let $N\ge 2$ be a natural  number. {Let $a_\eps\ge N\eps$, $a_\eps\to0$.}  Assume $r\in [R_0-a_\eps, R_0)$. Since $\omega$ is non increasing we have
\begin{equation*}
	\begin{split}
		\int_{-\infty}^{R_0}\omega(R_0-z) (R_0-z) \rho_\eps(r-z)\dz &\le	\omega(2a_\eps)\int_{R_0-a_\eps}^{R_0} (R_0-z)\rho_\eps(r-z)\dz\\&+ \omega(R_0-r_\eps)	\int_{-\infty}^{R_0-a_\eps}(R_0-z)\rho_\eps(r-z)\dz\\
		&\lesssim \omega(2a_\eps)\max\{R_0-r,\eps\}+a_\eps e^{-\frac{a_\eps}{\eps}}\\
		&\lesssim\max\{R_0-r,\eps\}(\omega(a_\eps)+\frac{a_\eps}{\eps}e^{-\frac{a_\eps}{\eps}})
		\,.
	\end{split}
\end{equation*}
If instead $r\in (\Rin,R_0-N\eps)$, we get 
\begin{equation*}
	\begin{split}
		\int_{-\infty}^{R_0}\omega(R_0-z) (R_0-z)\rho_\eps(r-z)\dz&\le	
		\omega(R_0-r_\eps)	\int_{-\infty}^{R_0} (R_0-z)\rho_\eps(r-z)\dz
		\lesssim R_0-r.
	\end{split}
\end{equation*}

\noindent
\textit{Step 4:} in this step we show \eqref{eq:est-u^4}.  Since $\xi^L(r,\cdot)$ is $2\pi L_0$-periodic, for fixed $r$, we can find $\theta_0=\theta_0(r)\in[-\pi L_0,\pi L_0]$ such that
\begin{equation*}
	\hat	\xi^L(r,\theta_0)= \fint_{-\pi L_0}^{\pi L_0}\hat \xi^L(r,\hat \theta)\di\hat \theta=0\,,
\end{equation*}
By the fundamental theorem of calculus, Hölder's inequality and Plancherel 
\begin{equation*}\label{eq:abs-u}
	\begin{split}
		|\hat \xi^L(r,\theta)|=\left|\int_{\theta_0}^\theta \partial_\theta\hat \xi^L(r,\hat\theta)\di\theta\right| &\le \sqrt{2\pi L_0}\left(\int_{-\pi L_0}^{\pi L_0}(\partial_\theta\hat \xi^L)^2\di\hat\theta\right)^{\frac12}=2\sqrt2\pi L_0\sqrt{A^L(r)}\,.
	\end{split}
\end{equation*}
This  together with steps 1 and 2 yield
\begin{equation*}	\begin{split}
	\strokedint_{-\pi L}^{\pi L} 
{(\hat \xi^L)^4}\di\theta&=
\strokedint_{-\pi L_0}^{\pi L_0} 
{(\hat \xi^L)^4}\di\theta
\lesssim L_0^2|A^L(r) |\strokedint_{-\pi L_0}^{\pi L_0} 
{(\hat \xi^L)^2} \di \theta\lesssim L_0^2\max\{R_0-r,\eps\}\,.
	\end{split}
\end{equation*}

\noindent
\textit{Step 5:} in this step we show \eqref{eq:uyy+ux}. By \eqref{plancherel-derivatives} the definition of $a^L$ and \eqref{ub:1} 
\begin{equation}\label{est:uyy}
	\begin{split}
			\fint_{-\pi L}^{\pi L}&(\partial_{\theta\theta}\hat \xi^L)^2\di\theta=
		\fint_{-\pi L_0}^{\pi L_0}(\partial_{\theta\theta}\hat \xi^L)^2\di\theta= \sum_{k\in\frac{\mathbb{Z}}{L_0}}k^2a^L(r,k)= \sum_{k\in\frac{\mathbb{Z}}{L_0}}k^2b^{L_0}(\cdot,k)*\rho_\eps(r)\\
		& = \sum_ {k\in \frac{ \mathbb{Z}}{L_0}}\bar b^{L_0}(\lambda,k)k^2\int_{-\infty}^{r_\eps}\rho_\eps(r-z)\dz+\sum_ {k\in \frac{ \mathbb{Z}}{L_0}}\biggl(k^2\int_{r_\eps}^{R_0}\bar b^{L_0}(g_\eps^{-1}(z),k)\rho_\eps(r-z)\dz\biggr)\\
		& \lesssim  e^{\frac{r_\eps-r}{\eps}}{ \lambda}^3 \fint_{\Rin}^{\Rin+\frac{\sqrt\eps}{2}}\sum_{k\in\frac{\mathbb Z}{L_0}}\frac{k^2}{r^3}\bar b^{L_0}(r,k)\di r
		+
		\|\rho_\eps\|_\infty R_0^3
		\int_{{\lambda}}^{R_0} \sum_{k\in\frac{\mathbb{Z}}{L_0}}\frac{k^2}{r^3}
		\overline b^{L_0}(r,k)\di r
		\\
		&\lesssim \left( \frac{e^{\frac{r_\eps-r}{\eps}}}{\sqrt\eps}+\frac1\eps\right)\int_{\Rin}^{{R_0}} \sum_{k\in\frac{\mathbb{Z}}{L_0}}\frac{k^2}{r^3}
		\overline b^{L_0}(r,k)\di r\\
		&
		\lesssim
		\left(\frac1{\sqrt \eps}+
		\frac1\eps \right)\F_\infty(\mu^{L_0})\lesssim \frac{1}\eps \,.
	\end{split}
\end{equation}
Since the function $(z_1,z_2)\mapsto{z_1^2}/{z_2}$ is convex we can apply Jensen's inequality and get
\begin{equation*}\label{jensen}
\frac{(\partial_ra^L(r,k))^2}{a^L(r,k)}= \frac{\big(\partial_rb^{L_0}(\cdot,k)*\rho_\eps(r)\big)^2}{b^{L_0}(\cdot,k)*\rho_\eps(r)}
\le 
\frac{(\partial_rb^{L_0}(\cdot,k))^2}{b^{L_0}(\cdot,k)}*\rho_\eps(r)\,.
\end{equation*}
This together with \eqref{plancherel-derivatives} and \eqref{derivatives-a} imply
\begin{equation}\label{est:ux}
\begin{split}
		\fint_{-\pi L}^{\pi L}(\partial_r\hat\xi^L)^2\di\theta=
	\fint_{-\pi L_0}^{\pi L_0}(\partial_r\hat\xi^L)^2\di\theta&= \sum_{k\in\frac{\mathbb{Z}}{L_0},k\ne0 }\frac{1}{4k^2}\frac{(\partial_r a^L(r,k))^2}{a^L(r,k)}
	\\& \le 
	 \sum_{k\in\frac{\mathbb{Z}}{L_0},k\ne0 }
	 \frac{1}{4k^2}\frac{(\partial_rb^{L_0}(\cdot,k))^2}{b^{L_0}(\cdot,k)}*\rho_\eps(r)\\
	 &= 
	 	 \sum_{k\in\frac{\mathbb{Z}}{L_0},k\ne0 }
	\bigg( \frac{1}{4k^2}\int_{r_\eps}^{R_0}
	 \frac{(\partial_r\bar b^{L_0}(g_\eps^{-1}(z),k))^2}{\bar b^{L_0}(g_\eps^{-1}(z),k)}\rho_\eps(r-z)\dz\bigg)\\
	 &\lesssim \|\rho_\eps\|_\infty \int_{\lambda}^{R_0} \sum_{k\in\frac{\mathbb{Z}}{L_0},k\ne0 }
	 \frac{\dot u^\star(r)}{4k^2}
	 \frac{(\partial_r\bar b^{L_0}(r,k))^2}{\bar b^{L_0}(r,k)}r\di r\\&
	 \lesssim \frac1\eps \F_\infty(\mu^{L_0})\lesssim \frac{1}\eps \,.
\end{split}
\end{equation}
Gathering \eqref{est:uyy} and \eqref{est:ux} we deduce \eqref{eq:uyy+ux}.\\

\noindent
\textit{Step 6:} in this step we show \eqref{eq:uxx-uxy-uxyy}. By \eqref{plancherel-derivatives}, \eqref{derivatives-a}
\begin{equation*}\label{eq:uxy}
	\fint_{-\pi L}^{\pi L}\int_{-1}^1( \partial_{r\theta}\hat\xi^L)^2r\di r\di\theta= 
	\fint_{-\pi L_0}^{\pi L_0}\int_{\Rin}^{\Rout}(\partial_{r\theta}\hat \xi^L)^2r\di r\di\theta= 
	\int_{\Rin}^{\Rout}	\sum_ {k\in \frac{ \mathbb{Z}}{L_0},k\ne 0 } \frac14 \frac{(\partial_ra^L(r,k))^2}{a^L(r,k)}r\di r\,.
\end{equation*}
From \eqref{eq:prop-moll} we deduce
\begin{equation}\label{eq:ax}
	{(\partial_ra^L(r,k))^2}={(b^L(\cdot,k)*\dot\rho_\eps(r))^2}
	\le	 {(b^L(\cdot,k)*|\dot\rho_\eps|(r))^2} = \frac1{\eps^2}{(b^L(\cdot,k)*\rho_\eps(r))^2}
	\le \frac1{\eps^2}(a^L(r,k))^2\,,
\end{equation}
so that recalling \eqref{est:A} we obtain
\begin{equation}\label{eq:uxy-bis}
	\fint_{-\pi L}^{\pi L}\int_{\Rin}^{\Rout}(\partial_{r\theta}\hat \xi^L)^2r\di r\di\theta\le\frac1{4\eps^2} \int_{\Rin}^{\Rout}\sum_ {k\in \frac{ \mathbb{Z}}{L_0} } 	a^L(r,k)r\di r\lesssim \frac{1}{\eps^2}\,.
\end{equation}
In a similar way \eqref{plancherel-derivatives} and \eqref{derivatives-a} give  
\begin{equation}\label{eq:uxx}
	\begin{split}
		\fint_{-\pi L}^{\pi L}
		\int_{\Rin}^{\Rout}(\partial_{rr}\hat \xi^L)^2r \di r\di \theta&=
		\fint_{-\pi L_0}^{\pi L_0}
		\int_{\Rin}^{\Rout}(\partial_{rr}\hat \xi^L)^2r \di r\di\theta\\&
		=
		\int_{\Rin}^{\Rout}\sum_{k\in\frac{\mathbb{Z}}{L_0},k\ne0}\frac1{k^2}\Big[\partial_{rr}\Big(\sqrt{a^L(r,k)}\Big)\Big]^2r\di r
		\\
		&\lesssim \int_{\Rin}^{\Rout}\sum_{k\in\frac{\mathbb{Z}}{L_0},k\ne0}\frac1{k^2}\frac{(\partial_{rr}a^L(r,k))^2}{a^L(r,k)}r\di r+
		\int_{\Rin}^{\Rout}\sum_{k\in\frac{\mathbb{Z}}{L_0},k\ne0}\frac1{k^2}\frac{(\partial_ra^L(r,k))^4}{(a^L(r,k))^3}r\di r\\
		&\lesssim \frac{1}{\eps^2}\int_{\Rin}^{\Rout}\sum_{k\in\frac{\mathbb{Z}}{L_0},k\ne0}\frac1{4k^2}\frac{(\partial_ra^L(r,k))^2}{a^L(r,k)}r\di r\\
		&\lesssim \frac{1}{\eps^2} \int_{\Rin}^{\Rout}\fint_{-\pi L_0}^{\pi L_0}(\partial_r\hat \xi^L)^2r\di\theta\di r
		\lesssim\frac{1}{\eps^2}\mathcal F_\infty (\mu^{L_0})	\lesssim\frac{1}{\eps^2}
		\,,
	\end{split}
\end{equation}
where the second inequality follows from 
\begin{equation*}
	{(\partial_ra^L(r,k))^4}= (\partial_ra^L(r,k))^2{(b^L(\cdot,k)*\dot\rho_\eps(r))^2}
	\le \frac1{\eps^2} \partial_r(a^L(r,k))^2 (a^L(r,k))^2\,,
\end{equation*}
and 
\begin{equation*}
	\begin{split}
		{(\partial_{rr}a^L(r,k))^2}= {(\partial_rb^L(\cdot,k)*\dot\rho_\eps(r))^2}\le 
		\frac1{\eps^2}(\partial_ra^L(r,k))^2\,.
	\end{split}
\end{equation*}
Moreover appealing again to \eqref{eq:ax} we find
\begin{equation}\label{eq:uyyx}
	\begin{split}
		\fint_{-\pi L}^{\pi L}	\int_{\Rin}^{\Rout}(\partial_{\theta\theta r}\hat \xi^L)^2r\di r\di\theta&=
		\fint_{-\pi L_0}^{\pi L_0}	\int_{\Rin}^{\Rout}(\partial_{\theta\theta r}\hat \xi^L)^2r\di r\di\theta\\&
		=
		\int_{\Rin}^{\Rout}\sum_{k\in\frac{\mathbb{Z}}{L_0},k\ne0}{k^2}\Big[\partial_r\Big(\sqrt{a^L(r,k)}\Big)\Big]^2r\di r\\
		&= \int_{\Rin}^{\Rout}\sum_{k\in\frac{\mathbb{Z}}{L_0},k\ne0 }\frac{k^2}4\frac{(\partial_ra^L(r,k))^2}{a^L(r,k)}\di r\\& \lesssim\frac1{\eps^2}\int_{\Rin}^{\Rout}\sum_{k\in\frac{\mathbb{Z}}{L_0},k\ne0 }{k^2}{a^L(r,k)}r\di r\\
		&
		\lesssim \frac1{\eps^2}\int_{\Rin}^{\Rout}\fint_{-\pi L_0}^{\pi L_0}(\partial_{\theta\theta}\hat \xi^L)^2r\di\theta\di r
		\lesssim\frac{1}{\eps^2}\mathcal F_\infty (\mu^{L_0})	\lesssim\frac{1}{\eps^2}\,.
	\end{split}
\end{equation}
Eventually gathering together \eqref{eq:uxy-bis}--\eqref{eq:uyyx} we deduce \eqref{eq:uxx-uxy-uxyy}.\\

\noindent 
\textit{Step 7:} in this step we show \eqref{eq:ux}.  As in step 4 we can find $\theta_0\in[-\pi L_0,\pi L_0]$ such that
\begin{equation*}
	\partial_r\hat	\xi^L(r,\theta_0)= \fint_{-\pi L_0}^{\pi L_0}\partial_r\hat \xi^L(r,\hat \theta)\di\hat \theta=0\,,
\end{equation*}
so that by the fundamental theorem of calculus, Hölder's inequality it holds
\begin{equation}\label{eq:abs-u_x}
	\begin{split}
		|\partial_r\hat \xi^L(r,\theta)|=\left|\int_{\theta_0}^\theta\partial_{r\theta}\hat \xi(r,\theta')\di\theta'\right| &\le \sqrt{2\pi L_0}\left(\int_{-\pi L_0}^{\pi L_0}(\partial_{r\theta}\hat \xi^L)^2r\di\theta'\right)^{\frac12}\\
		&={\sqrt2\pi }L_0\Biggl(	\sum_ {k\in \frac{ \mathbb{Z}}{L},k\ne0}\frac{1}{4}
		\frac{ (\partial_ra^L(r,k))^2}{a^L(r,k)}\Biggr)^{\frac12}\\
		& \lesssim L_0\Bigg(\frac1{4\eps^2} \sum_ {k\in \frac{\mathbb{Z}}{L},k\ne0} a^L(r,k)\Bigg)^{\frac12}\lesssim \frac{L_0}{\eps}
		\,,
	\end{split}
\end{equation}
where we used \eqref{eq:ax} and \eqref{est:A}. 
Therefore \eqref{eq:abs-u_x} implies
\begin{equation*}\label{ub:est4}
	\begin{split}
		\strokedint_{-\pi L_0}^{\pi L_0} \int_{\Rin}^{\Rout}
		{(\partial_r\hat \xi^L)^4} r\di r\di\theta&\lesssim  \frac{ L_0^2}{\eps^2 }
		\strokedint_{-\pi L_0}^{\pi L_0} \int_{\Rin}^{\Rout}
		{(\partial_r\hat \xi^L)^2}r \di r\di\theta
		\lesssim \frac{ L_0^2}{\eps^2}\,.
	\end{split}
\end{equation*}

\noindent
\textit{Step 8:} in this step we show \eqref{eq:weak-conv}. By recalling Definition \ref{def:muL} we have 
\begin{equation*}
\hat\mu^L:=	\mu^L(\hat \xi^L)= \sum_ {k\in \frac{ \mathbb{Z}}{L_0} } a^L(r,k) \mathcal{L}^1\res(\Rin,\Rout)\times\delta_k\,,
\end{equation*}
and 
\begin{equation*}
\hat\mu^L_{,r}=	\mu^L_{,r}(\hat \xi^L)= \sum_ {k\in \frac{ \mathbb{Z}}{L_0} }\partial_ra^L(r,k) \mathcal{L}^1\res(\Rin,\Rout)\times\delta_k\,,
\end{equation*}
Let $\varphi \in C^\infty_c((\Rin,\Rout)\times\R)$. Then 
\begin{equation*}
	\begin{split}
		\int_{(\Rin,\Rout)\times\R}\varphi\di\hat \mu^L&= 
		\int_{(\Rin,\Rout)\times\R}\varphi\di\hat\mu^L- 	\int_{(\Rin,R_0)\times\R}\varphi\di\mu^{L_0}
		+ \int_{(\Rin,R_0)\times\R}\varphi\di\mu^{L_0}\,.
	\end{split}
\end{equation*}
By Lemma \ref{lem:discretisation} we have that 
\begin{equation*}
	\lim_{L\to\infty}\int_{(\Rin,R_0)\times\R}\varphi\di\mu^{L_0}=\int_{(\Rin,R_0)\times\R}\varphi\di\mu\,,
\end{equation*}
hence it suffices to show that 
\begin{equation*}
\lim_{L\to +\infty}	\biggl(\int_{(\Rin,\Rout)\times\R}\varphi\di\hat\mu^L- 	\int_{(\Rin,R_0)\times\R}\varphi\di\mu^{L_0} \biggr)=0\,.
\end{equation*}
Indeed by \eqref{est:A} and \eqref{eq:constraint} we have 
\begin{equation*}
	\begin{split}
		\biggl|\int_{(\Rin,\Rout)\times\R}\varphi\di\hat\mu^L- &	\int_{(\Rin,R_0)\times\R}\varphi\di\mu^{L_0} \biggr|\le
\|\varphi\|_\infty \left|
	\int_{(\Rin,\Rout)\times\R}\di\hat\mu^L- 	\int_{(\Rin,R_0)\times\R}\di\mu^{L_0}\right|\\&=\|\varphi\|_\infty
	\Bigg| \int_{\Rin}^{\Rout} \sum_ {k\in \frac{ \mathbb{Z}}{L_0} } a^L(r,k)\di r- \int_{\Rin}^{R_0} \sum_ {k\in \frac{ \mathbb{Z}}{L_0} }\bar b^L(r,k)\di r\Bigg|\\
	&\lesssim \|\varphi\|_\infty   \left|\int_{\Rin}^{\Rout}\mathcal R_\eps(r)\di r\right|+\sqrt\eps\lesssim \sqrt\eps\to0\quad \text{as }L\to+\infty\,.
	\end{split}
\end{equation*}
Moreover we have 
\begin{equation*}
		\int_{(\Rin,\Rout)\times\R}\varphi\di\hat \mu^L_{,r}= -	\int_{(\Rin,\Rout)\times\R}\partial_r\varphi\di\hat \mu^L\to 
	-	\int_{(\Rin,\Rout)\times\R}\partial_r\varphi\di \mu= 
			\int_{(\Rin,\Rout)\times\R}\varphi\di \mu_{,r}\,.
\end{equation*}

\noindent
\textit{Step 9:} in this step we show \eqref{eq:limsup}.
By  \eqref{plancherel-derivatives}, \eqref{def:b} we have 
\begin{equation}\label{ub:est-lt1}
	\begin{split}
		&	\strokedint_{-\pi L_0}^{\pi L_0} \int_{\Rin}^{\Rout} \frac{(\partial_{\theta\theta}\hat \xi^L)^2}{r^3}
		\di r \di \theta= \int_{\Rin}^{\Rout}\sum_ {k\in \frac{ \mathbb{Z}}{L_0}}\frac{a^L(r,k)k^2}{r^3} \di r=
		\int_{\Rin}^{\Rout}\sum_ {k\in \frac{ \mathbb{Z}}{L_0}}\frac{b^{L_0}(\cdot,k)*\rho_\eps(r)k^2}{r^3} \di r\\
		& =
		\int_{\Rin}^{\Rout}\sum_ {k\in \frac{ \mathbb{Z}}{L_0}}\biggl(\frac{k^2}{r^3}\int_{r_\eps}^{R_0}\bar b^{L_0}(g_\eps^{-1}(z),k)\rho_\eps(r-z)\dz\biggr)\di r+\int_{\Rin}^{\Rout} \sum_ {k\in \frac{\mathbb{Z}}{L_0}}
	\frac{	\bar b^{L_0}(\lambda,k)k^2}{r^3} 
	\int_{-\infty}^{r_\eps}\rho_\eps(r-z)\dz\di r
	\,.
	\end{split}
\end{equation}
By Fubini's theorem, the change of variable $t=\frac{r-z}{\eps}$ and the fact that 
\begin{equation*}
	\frac1{(z+t\eps)^3}\le \frac1{z^3}+C\eps|t|\le \frac{1}{z^3}(1+C\eps|t|)\quad \text{ for }z\in (r_\eps,R_0)\,,
\end{equation*}
 we estimate
\begin{equation}\label{ub:est-lt2}
\begin{split}
	\int_{\Rin}^{\Rout}\sum_ {k\in \frac{ \mathbb{Z}}{L_0}}&\biggl(\frac{k^2}{r^3}\int_{r_\eps}^{R_0}\bar b^{L_0}(g_\eps^{-1}(z),k)\rho_\eps(r-z)\dz\biggr)\di r\\&= 	\int_{r_\eps}^{R_0}	\int_{\Rin}^{\Rout}\sum_ {k\in \frac{ \mathbb{Z}}{L_0}}\biggl(\frac{k^2}{r^3}
	\bar b^{L_0}(g_\eps^{-1}(z),k)\rho_\eps(r-z)\biggr)\di r\dz\\
	&=	\int_{r_\eps}^{R_0}	\int_{\frac{\Rin-z}\eps}^{\frac{\Rout-z}\eps}\sum_ {k\in \frac{ \mathbb{Z}}{L_0}}\biggl(\frac{k^2}{(z+t\eps)^3}
	\bar b^{L_0}(g_\eps^{-1}(z),k)\rho(t)\biggr)\di t\dz\\
	&\le 
		\int_{r_\eps}^{R_0}	\int_{\frac{\Rin-z}\eps}^{\frac{\Rout-z}\eps}\sum_ {k\in \frac{ \mathbb{Z}}{L_0}}\biggl( \frac{k^2}{z^3}
	\bar b^{L_0}(g_\eps^{-1}(z),k)(1+C\eps|t|)\rho(t)\biggr)\di t\dz\\
	&\le m_\eps(1+C\eps)	\int_{ \lambda}^{R_0}\sum_ {k\in \frac{ \mathbb{Z}}{L_0}} \frac{k^2}{z^3}
	\bar b^{L_0}(z,k)\dz
	\,,
\end{split}
\end{equation}
while from \eqref{ub:1} we deduce 
\begin{equation}\label{ub:est-lt3}
	\begin{split}
			\int_{\Rin}^{\Rout}	\sum_ {k\in \frac{ \mathbb{Z}}{L_0}} \frac{\bar b^{L_0}(\lambda,k)k^2}{r^3}
	&	\int^{r_\eps}_{-\infty}\rho_\eps(r-z)\dz\di r\\
		&\lesssim  		\int_{\Rin}^{\Rout}\int_{-\infty}^{r_\eps}\rho_\eps(r-z)\dz\di r
	 \sum_ {k\in \frac{ \mathbb{Z}}{L_0}} \frac{\bar b^{L_0}(\lambda,k)k^2}{\lambda^3}
		\\
		&
		\lesssim
	 \frac\eps2\Big(e^{\frac{r_\eps-\Rin}{\eps}}-
	e^{\frac{r_\eps-\Rout}{\eps}}
	\Big)
		\fint_{\Rin}^{\Rin+\frac{\sqrt\eps}{2}} \sum_{
			k\in \frac{ \mathbb{Z}}{L_0}}\frac{ \overline b^{L_0}(r,k)k^2}{r^3}\di r 
		\,.
	\end{split}
\end{equation}
Analogously, from \eqref{plancherel-derivatives}, \eqref{derivatives-a} and Jensen's inequality {and using that 
\begin{equation*}
(z+t\eps)\dot u^\star(z+t\eps)\le z\dot u^\star(z)(1+C\eps |t|)\quad \text{ for }z\in (r_\eps,R_0)\,,
\end{equation*}
} it holds
\begin{equation}\label{ub:est-lt4}
	\begin{split}
		\strokedint_{-\pi L_0}^{\pi L_0} \int_{\Rin}^{\Rout}  \dot u^\star (\partial_r\hat \xi^L)^2r\di r \di\theta&= \int_{\Rin}^{\Rout}\sum_ {k\in \frac{ \mathbb{Z}}{L_0},k\ne0}\frac{\dot u^\star}{4k^2}
		\frac{(\partial_ra^L(r,k))^2}{ a^L(r,k)} r
		\di r\\
		&\le 
		\int_{\Rin}^{\Rout}\sum_ {k\in \frac{ \mathbb{Z}}{L_0},k\ne0}\frac{\dot u^\star}{4k^2}
		\frac{(\partial_rb^{L_0}(\cdot,k))^2}{ b^{L_0}(\cdot,k)} *\rho_\eps(r)\di r\\
		&\le m_\eps(1+C\eps)\int_{\bar\lambda}^{R_0}
		\sum_ {k\in \frac{ \mathbb{Z}}{L_0},k\ne0}\frac{\dot u^\star}{4k^2}
		\frac{(\partial_r\bar b^{L_0}(r,k))^2}{\bar b^{L_0}(r,k)} \di r\,.
	\end{split}
\end{equation}
Gathering together \eqref{ub:est-lt1}--\eqref{ub:est-lt4} we obtain 
\begin{equation*}\label{eq:step1}	\fint_{-\pi L}^{\pi L}\int_{\Rin}^{\Rout}\Big(\dot u^\star(\partial_r\hat \xi^L)^2+ \frac{(\partial_{\theta\theta}\hat \xi^L)^2}{r^4}
	\Big)r \di r\di\theta
\le m_\eps \biggl(1+C\eps+ \frac{C\eps }{\sqrt\eps}\biggr)\mathcal F_\infty (\mu^{L_0}) \,,
\end{equation*}
and hence by letting $L\to+\infty$ and recalling Lemma \ref{lem:discretisation} \ref{(iii)discr} we deduce \eqref{eq:limsup} and the proof is concluded.

\end{proof}
\noindent \textbf{Construction of the in-plane displacement.}
We conclude by constructing the in-plane displacement and proving Proposition~\ref{prop:upb}. The displacement is chosen so that the first, third, and fourth terms in \eqref{def:F_L} vanish in the limit. More precisely, the radial component $u_r$ is defined as the relaxed solution $u^\star$ plus a suitable correction term, chosen in such a way that the fourth term vanishes identically. The tangential component $u_\theta$ is then constructed so that the third term becomes negligible, while remaining compatible with the smallness of the first term.

We emphasize that, unlike in the toy model, the radial geometry introduces a strong coupling between $u_r$ and $u_\theta$, since the energy depends explicitly on both quantities and not only on their derivatives. As a consequence, the construction of the in-plane displacement is considerably more delicate than in the flat setting considered in \cite{BeMa25}.
\begin{proof}[Proof of Proposition \ref{prop:upb}]
Let $\mu\in\mathcal{M}_\infty$ be as in the statement.  We introduce some parameters depending on $L$, which will be chosen later.
Let $\eps=\eps(L)>0$ and $n=n(L)\in\mathbb{N}$ be such that
\begin{equation}\label{cond_parameters1}
	\lim_{L\to +\infty}\eps(L)=0\,,\quad
	\lim_{L\to +\infty}n(L)=\lim_{L\to +\infty}\frac L{n(L)}=+\infty\,.
\end{equation}
Let $ M=M(L)\in\mathbb{N}$, $M\ge 2$  be such that
\begin{equation}\label{cond_parameters2}
	\lim_{L\to +\infty}M(L)=+\infty\,,\quad \text{and}\quad \lim_{L\to +\infty}M(L){\eps(L)}=0\,,
\end{equation}
Define also $\delta=\delta(L):=\frac{\eps(L)}{M(L)}<\eps(L)$  and $L_0:=L/n(L)$. 
Consider the function  $\hat \xi^L\in \mathcal{A}_L^{\rm out}\cap \mathcal{A}_{L_0}^{\rm out}$     given by Lemma \ref{lem:moll}. Recall that
$$A^L(r):=\fint_{-\pi L}^{\pi L}\frac{(\partial_\theta\hat{\xi}^L(r,\cdot))^2}{2}\di \theta \quad \text{ and }
\quad	f^L(r):=\sqrt{\frac{-u^\star(r)r}{A^L(r)}}\quad \text{for }r\in (\Rin,R_0)\,.$$
 %
We let $\psi_\delta\in C^\infty(\R)$ be a cut-off function with 
\begin{equation}\label{psi-delta}
\psi_\delta\equiv0\ \text{ in } [R_0-\delta,+\infty)\,, \quad \psi_\delta\equiv1\ \text{ in } (-\infty,R_0-2\delta]\,,\quad |\dot\psi_\delta(r)|\le C\delta^{-1}\,,\quad |\ddot\psi_\delta(r)|\le C\delta^{-2}\,.
\end{equation}
Then clearly $\dot\psi_\delta=\ddot\psi_\delta=0$ in $(R_0-2\delta,R_0-\delta)^c$.
We next define  $\big(u^{L}_r,u^{L}_\theta,\xi^L\big)$ as follows:
\begin{equation}\label{def:recovery}
	\begin{split}
	&	{\xi^L(r,\theta):= \psi_\delta(r)f^L(r)\hat \xi^L(r,\theta) \,,}\\[1em]
	& { u_\theta^{L}(r,\theta):= u_\theta^{0,L}(r,\theta)+u_\theta^{1,L}(r)
		\,,}\\[1em]
	& u^{L}_r(r,\theta):=u^\star(r)+\frac{1}{L^2}u_r^{0,L}(r,\theta)
	\,,
	\end{split}
\end{equation}
where 
\begin{equation}\label{def:recovery2}
	\begin{split}
&u_\theta^{0,L}(r,\theta):=-\psi_\delta^2(r)u^\star(r)\theta-
\int_{0}^{\theta }\frac{(\partial_\theta{\xi}^L)^2}{2r}\di \hat\theta\,,\\[1em]
&u_\theta^{1,L}(r):= r\int_r^{R_0}\frac{1}{\hat r^2} \fint_{-\pi L_0}^{\pi L_0} \left(\hat r\partial_{r}u_\theta^{0,L}
-u_\theta^{0,L}+\partial_r\xi^L\partial_\theta\xi^L
\right)\di\hat\theta\di\hat r\,,\\[1em]
& u_r^{0,L}(r,\theta):=-\int_0^\theta(r\partial_r u^L_\theta - u_\theta^L+\partial_r\xi^L\partial_\theta\xi^L)\di\hat\theta\,.
	\end{split}
\end{equation}
{Notice that $u^{1,L}$ satisfies 
\begin{equation}\label{eq:u-theta-1}
	\begin{cases}
	r	\partial_ru_\theta^{1,L}- u_\theta^{1,L}= -\displaystyle\fint_{-\pi L_0}^{\pi L_0} \left( r\partial_{r}u_\theta^{0,L}
	-u_\theta^{0,L}+\partial_r\xi^L\partial_\theta\xi^L
	\right)\di\hat\theta&\text{in }(\Rin,R_0)\\[1em]
	u_\theta^{1,L}\equiv0&\text{in }[R_0,\Rout)
	\end{cases}\,.
\end{equation}
}
For the readers convenience we divide the rest of the proof into a number of steps.\\

\noindent \textit{Step 1:} we show that $(u_r^L,u_\theta^L,\xi^L)\in  \mathcal{A}_L^{\rm in}\times  \mathcal{A}_L^{\rm out} \cap \mathcal{A}_{L_0}^{\rm in}\times  \mathcal{A}_{L_0}^{\rm out}$.

Clearly $\xi^L\in \mathcal{A}_L^{\rm out}\cap \mathcal{A}_{L_0}^{\rm out}$.  To see that $(u_r^L(r,\cdot),u_\theta^L(r,\cdot))$ is $2\pi L_0$ periodic we use the following fact: 

\smallskip
\textit{A differentiable function $h$ is $T$-periodic if $h'$ is $T$-periodic and $h(t)=h(t+T)$ for some $t$. }

\smallskip

\noindent
The function $\partial_\theta u^L_{\theta}(x,\cdot)$ is $2\pi L_0$-periodic, since $\partial_\theta\xi^L(r,\cdot)$ is, and from \eqref{plancherel-derivatives} satisfies
\begin{equation*}
	\begin{split}
		u_\theta^L(r,\pi L_0)-u_\theta^L(r,-\pi L_0)&=-2\pi L_0\psi_\delta^2(r)u^\star(r)-	\int_{\pi L_0}^{\pi L_0 }\frac{(\partial_\theta{\xi}^L(r,\cdot))^2}{2r}\di \theta'\\
	&=-
	2\pi L_0\frac{\psi_\delta^2(r)}r(u^\star(r)r-(f^L(r))^2A^L(r))
	=
	0\,,
	\end{split}
\end{equation*}
from which we deduce $u^L_\theta(r,\cdot)$ is $2\pi L_0$-periodic. Using this periodicity, and in particular also of $\partial_ru^L_{\theta}(r,\cdot)$, we see that $\partial_\theta u^L_{r}(r,\cdot)$ is $2\pi L_0$-periodic. Moreover  from \eqref{eq:u-theta-1} we have
\begin{equation*}
	\begin{split}
	u_r^L(r,\pi L_0)&-u_r^L(r,-\pi L_0)=-\frac{1}{L^2}\int_{-\pi L_0}^{\pi L_0}(r\partial_r u^L_\theta - u_\theta^L+\partial_r\xi^L\partial_\theta\xi^L)\di\hat\theta\\
	&=-\frac{2\pi L_0}{L^2}\left(
	r\partial_ru_\theta^{1,L}-u_\theta^{1,L}
	+\fint_{-\pi L_0}^{\pi L_0}
	(r\partial_r u^{0,L}_\theta - u_\theta^{0,L}+\partial_r\xi^L\partial_\theta\xi^L)\di\hat\theta\
	\right)=0
	\,.
	\end{split}
\end{equation*}
Thus we deduce that $ u^L_{r}$ is $2\pi L_0$-periodic.  For the reader convenience we divide the rest of the proof into several  steps. 
We will repeatedly use that the averaged integral over $(-\pi L,\pi L)$ of a $2\pi L_0$-periodic function is equal to  the averaged integral over $(-\pi L_0,\pi L_0)$ of the same function.\medskip

\noindent
\textit{Step 2:} we show that $(u_r^L,u_\theta^L,\xi^L)$ converges to $\mu$ in the sense of Definition \ref{def:convergence}. 
We have that 
	\begin{equation*}
	\hat	\xi^{L}(r,\theta)= \sum_ {k\in \frac{\mathbb{Z}}{L_0}, k>0 }  a^L_k(r)\sqrt2 \sin(k\theta) + \sum_ {k\in \frac{ \mathbb{Z}}{L_0}, k < 0 }  a^L_k(r)\sqrt2\cos(k\theta)\,,
\end{equation*}
so that 
\begin{equation*}
	\xi^L(r,\theta)= \sum_ {k\in \frac{ \mathbb{Z}}{L_0}, k>0 }\psi_\delta(r)f^L(r)  a^L_k(r)\sqrt2 \sin(k\theta) + \sum_ {k\in \frac{ \mathbb{Z}}{L_0}, k < 0 } \psi_\delta(r)f^L(r) a^L_k(r)\sqrt2\cos(k\theta)\,.
\end{equation*}
Therefore we get
\begin{equation*}
	\begin{split}
		\mu^L:=	\mu^L(\xi^L)&=\sum_ {k\in \frac{\mathbb{Z}}{L_0} }\psi^2_\delta(r)(f^L(r) )^2 (a^L_k(r))^2k^2 \mathcal{L}^1\res(\Rin,\Rout)\times  \delta_k\\
		&= \psi_\delta^2(r)(f^L(r))^2 \Big(\sum_ {k\in \frac{ \mathbb{Z}}{L_0} } (a^L_k(r))^2k^2  \mathcal{L}^1\res(\Rin,R_0-\delta)\times \delta_k\Big) \,.
	\end{split}
\end{equation*}
We show $\mu^L\stackrel{*}{\rightharpoonup}\mu$. We fix $\varphi\in C_c^\infty((\Rin,\Rout)\times\R)$ and we write
\begin{equation*}
	\begin{split}
	\int_{(\Rin,\Rout)\times\R}\varphi\di\mu^L&= 	\int_{(\Rin,R_0-\delta)\times\R}\varphi\di\mu^L\\&=	\left(\int_{(\Rin,R_0-\delta)\times\R}\varphi\di\mu^L- 	\int_{(\Rin,R_0-\delta)\times\R}\varphi\di\mu^L(\hat \xi^L)\right)\\&+\int_{(\Rin,R_0-\delta)\times\R}\varphi\di\mu^L(\hat \xi^L)\,.
	\end{split}
\end{equation*}
By Lemma \ref{lem:moll} and the fact that $\mu([R_0,\Rout)\times\R)=0$ it holds 
\begin{equation*}
	\lim_{L\to+\infty}\int_{(\Rin,R_0-\delta)\times\R}\varphi\di\mu^L(\hat \xi^L)=\int_{(\Rin,R_0)\times\R}\varphi\di\mu\,.
\end{equation*}
Therefore it is sufficient to show that 
\begin{equation*}
	\lim_{L\to+\infty}\left(\int_{(\Rin,R_0-\delta)\times\R}\varphi\di\mu^L- 	\int_{(\Rin,R_0-\delta)\times\R}\varphi\di\mu^L(\hat \xi^L)\right)=0\,.
\end{equation*}
Recalling that $\psi_\delta=0$ in $[R_0-\delta,+\infty)$, $\psi_\delta=1$ in $(-\infty,R_0-2\delta]$ and $M\eps=M^2\delta\ge 2\delta$, we have 
\begin{equation*}
	\begin{split}
	 \bigg|
		\int_{(\Rin,R_0-\delta)\times\R}\varphi\di\mu^L- 	\int_{(\Rin,R_0-\delta)\times\R}\varphi\di\mu^{L}(\hat \xi^L)\bigg|
	&	=
		\bigg| \int_{\Rin}^{R_0-\delta} \sum_ {k\in \frac{\mathbb{Z}}{L_0} }\varphi(r,k) (a^L_k(r))^2k^2 \Big(\psi^2_\delta(r)(f^L(r))^2-1\Big)\di r
		\bigg|\\
		&
		\le  \|\varphi\|_{\infty}
		\bigg|  \int^{R_0-M\eps}_{\Rin}\Big((f^L(r))^2-1\Big) \sum_ {k\in \frac{ \mathbb{Z}}{L_0} } (a^L_k(r))^2k^2\di r \bigg|\\&
	+	\|\varphi\|_{\infty}
		 \bigg|  \int_{R_0-M\eps}^{R_0-\delta} \Big(\psi^2_\delta(r)(f^L(r))^2-1\Big)\sum_ {k\in \frac{ \mathbb{Z}}{L_0} } (a^L_k(r))^2k^2 \di r \bigg|
		 \,.
	\end{split}
\end{equation*}
Since 
$$\sum_ {k\in \frac{ \mathbb{Z}}{L_0} } (a^L_k(r))^2k^2= A^L(r)=\frac12\fint_{-\pi L}^{\pi L}(\partial_\theta\hat{\xi}(r,\cdot))^2\di\theta\,,$$ by \eqref{eq:AL} and \eqref{eq:f(x)} we have {for $r\in (R_0-M\eps, R_0-\delta)$}
\begin{equation*}
 \Big(\psi^2_\delta(r)(f^L(r))^2-1\Big)\sum_ {k\in \frac{ \mathbb{Z}}{L_0} }
  (a^L_k(r))^2k^2\lesssim\Big(\psi^2_\delta(r)(1+o_L(1))-1\Big)M\eps
\end{equation*}
which together with \eqref{cond_parameters2} imply
\begin{equation*}
\|\varphi\|_{\infty}
\bigg|  \int_{R_0-M\eps}^{R_0-\delta} \sum_ {k\in \frac{ \mathbb{Z}}{L_0} } (a^L_k(r))^2k^2(\psi^2_\delta(r)(f^L(r))^2-1)\di r \bigg|\lesssim  M\eps \lesssim M^2\delta\to0\quad \text{ as } L\to+\infty\,.
\end{equation*}
Whereas \eqref{eq:f(x)}  with $N=M$ and the fact that $M=M(L)\to+\infty$ imply 
{for $r\in (\Rin,R_0-M\eps)$ 
	\begin{equation*}
		\Big((f^L(r))^2-1\Big)\sum_ {k\in \frac{ \mathbb{Z}}{L_0} }
		(a^L_k(r))^2k^2\le o_L(1)\,,
\end{equation*}}
so that 
\begin{equation*}
  \|\varphi\|_{\infty}
\bigg|  \int_{\Rin}^{R_0-M\eps} \sum_ {k\in \frac{ \mathbb{Z}}{L_0} } a^L_k(r)k^2(f^L(r)-1)\di r \bigg|\le o_L(1)\to0\quad \text{ as } L\to+\infty\,.
\end{equation*}
Eventually by duality we have 
\begin{equation*}
	\int_{\R\times(\Rin,\Rout)}\varphi\di \mu^L_{,r}= -	\int_{\R\times(\Rin,\Rout)}\varphi_{,r}\di \mu^L\to 
	-\int_{\R\times(\Rin,\Rout)}\varphi_{,r}\di \mu= 
	\int_{\R\times(\Rin,\Rout)}\varphi\di\mu_{,r}\,,
\end{equation*}
which in turn implies  $\mu^L_{,r}\stackrel{*}{\rightharpoonup}\mu_{,r}$.
\medskip

 \noindent
\textit{Step 3:} we show that 
\begin{equation*}\label{step3}
	\limsup_{L\to\infty}\mathcal{F}_L(u_r^L,u_\theta^L,\xi^L)\le \mathcal{F}_\infty(\mu)\,.
\end{equation*}
To prove this, we show that the fifth term in \eqref{def:F_L} is the leading-order contribution and converges to $\mathcal{F}_\infty(\mu)$, while all remaining terms vanish in the limit. Since all variables are $2\pi L_0$ periodic, we will replace the average in $[-\pi L,\pi L]$ with the average in $[-\pi L_0,\pi L_0]$. \medskip

\noindent
\textit{Step 3-(i): Estimate of second and fourth term.}
Since $u^\star$ is negative in $(\Rin,R_0)$ we have $(u^\star)_+=0$ in the same interval. In addition $\xi^L=0$ in $(R_0-\delta, \Rout)$ thus we have
\begin{equation}\label{ub:step1.0}
	\fint_{-\pi L_0}^{\pi L_0}\int_{\Rin}^{\Rout}	L^2\frac{(u^\star)_+}{r}\frac{(\partial_\theta\xi^L)^2}{r^2}r\di r\di\theta=0 \,.
\end{equation}
Furthermore noticing that
\begin{equation*}
	\partial_\theta u_r^L=-\frac1{L^2}(r\partial_ru_\theta^L-u_\theta^L+\partial_r\xi^L\partial_\theta\xi^L)\,,
\end{equation*}
we also get
\begin{equation}\label{ub:step1.1}
	\strokedint_{-\pi L_0}^{\pi L_0} \int_{\Rin}^{\Rout}  \Big( L^2 \frac{	\partial_\theta u_r^L}{2r}
	 +\frac{\partial_ru_\theta^L}2-\frac{u_\theta^L}{2r}+\frac{\partial_r\xi^L\partial_\theta\xi^L}{2r}
	  \Big)^2 r \di r\di\theta=0\,.
\end{equation}
\medskip

\noindent
\textit{Step 3-(ii): Estimate of the fifth/leading term.} We show that 
\begin{equation}\label{ub:step2}
	\begin{split}
		\limsup_{L\to+\infty}\pi
	\fint_{-\pi L_0}^{\pi L_0}\int_{\Rin}^{\Rout}\Big(\dot u^\star(\partial_r\xi^L)^2 & + \frac{(\partial_{\theta\theta}\xi^L)^2}{r^4} \Big)r\di r\di\theta\\
&	\le \mathcal{F}_\infty(\mu)+C\lim_{L\to +\infty} \omega(2M^2\delta)\log M\,.
	\end{split}
\end{equation}
We start by computing
\begin{equation}\label{der:ux}
\partial_r\xi^L(r,\theta)= \psi_\delta(r)f^L(r)\partial_r\hat \xi^L(r,\theta)+ \dot\psi_\delta(r)f^L(r)\hat \xi^L(r,\theta)+
	\psi_\delta(r)\dot f^L(r)\hat \xi^L(r,\theta)\,.
\end{equation}
Therefore by Young's inequality
\begin{equation}\label{ub:step2-0}
\begin{split}
		\fint_{-\pi L_0}^{\pi L_0} \int_{\Rin}^{\Rout}\dot u^\star(\partial_r\xi^L)^2 r  \di r\di\theta
	&	\le (1+\alpha) 
			\fint_{-\pi L_0}^{\pi L_0}\int_{\Rin}^{R_0-\delta}\dot u^\star(f^L)^2(\partial_r\hat \xi^L)^2  r  \di r\di\theta\\
			&+ 2(1+\alpha^{-1}) \frac1{\delta^2}	\fint_{-\pi L_0}^{\pi L_0}\int_{R_0-2\delta}^{R_0-\delta}\dot u^\star(f^L)^2(\hat \xi^L)^2  r  \di r\di\theta\\
				&+ 2(1+\alpha^{-1})	\fint_{-\pi L_0}^{\pi L_0}\int_{\Rin}^{R_0-\delta}\dot u^\star(\dot f^L)^2(\hat \xi^L)^2  r  \di r\di\theta\,,
\end{split}
\end{equation}
for any $\alpha>0$. We now estimate each term on the right hand side of \eqref{ub:step2-0}.
In particular,
 \eqref{eq:f(x)},  \eqref{eq:est-u} and \eqref{eq:f'(x)-f''(x)}  yield
\begin{equation}\label{ub:step2-1}
\fint_{-\pi L_0}^{\pi L_0}\int_{\Rin}^{R_0-\delta}\dot u^\star(f^L)^2(\partial_r\hat \xi^L)^2  r  \di r\di\theta
\le 
	 (1+o_L(1))	\fint_{-\pi L_0}^{\pi L_0}\int_{\Rin}^{R_0-\delta}\dot u^\star(\partial_r\hat \xi^L)^2  r  \di r\di\theta
	 \,,
\end{equation}
\begin{equation}\label{ub:step2-2}
	\begin{split}
	\frac1{\delta^2}	\fint_{-\pi L_0}^{\pi L_0}\int_{R_0-2\delta}^{R_0-\delta}\dot u^\star(f^L)^2(\hat \xi^L)^2  r\di r\di\theta  &\lesssim \frac1{\delta^2}{\delta}(\omega(2M\eps)+Me^{-M})\int_{R_0-2\delta}^{R_0-\delta}r\di r
\\&\lesssim (\omega(2M\eps)+Me^{-M})\,,
	\end{split}
\end{equation}
and
\begin{equation}\label{ub:step2-3}
	\begin{split}
\fint_{-\pi L_0}^{\pi L_0}\int_{\Rin}^{R_0-\delta}\dot u^\star(\dot f^L)^2(\hat \xi^L)^2 r \di r\di\theta&\lesssim  
\int^{R_0-\delta}_{R_0-\eps} \frac1{(R_0-r)\eps}\eps (\omega(2M\eps)+Me^{-M})\di r \\
&+ \int^{R_0-\eps}_{R_0-\sqrt{\eps}} \frac{e^{\frac {r-R_0}\eps}}{(R_0-r)\eps}(R_0-r)(\omega(2\sqrt\eps)+{(\sqrt\eps)^{-1}}e^{-\frac1{\sqrt\eps}})\di r\\&
+ \int_{(\Rin, M_0-\sqrt\eps)\cup(M_0+\sqrt{\eps},R_0-\sqrt\eps)}\frac{e^{-\frac1{\sqrt\eps}}}{(R_0-r)\eps}(R_0-r)\di r\\
&+\int_{M_0-\sqrt\eps}^{M_0+\sqrt\eps}
\frac{\eps}{(R_0-r)\eps}(R_0-r)\di r
\\
& \lesssim \log\frac{\eps}{\delta}(\omega(2M\eps)+Me^{-M})+o_L(1)\,. 
	\end{split}
\end{equation} 
Gathering together \eqref{ub:step2-0}--\eqref{ub:step2-3} and recalling that $\delta=\eps/M$ we infer 
\begin{equation}\label{lead-1}
	\begin{split}
		\fint_{-\pi L_0}^{\pi L_0}\int_{\Rin}^{\Rout}\dot u^\star(\partial_r\xi^L)^2 r  \di r\di\theta &\le (1+\bar \alpha) 	\fint_{-\pi L_0}^{\pi L_0}\int_{\Rin}^{\Rout}\dot u^\star(\partial_r\hat\xi^L)^2 r  \di r\di\theta\\& + 
	C\log M(\omega(2M^2\delta)+Me^{-M})+o_L(1)\,,
	\end{split}
\end{equation}
with $\bar{\alpha}:=\alpha+\alpha o_L(1)+o_L(1)$.
For the second term, since
 $\partial_{\theta\theta}\xi^L=\psi_\delta(r)f^L(r)\partial_{\theta\theta}\hat \xi^L$,  from \eqref{eq:f(x)} it follows
\begin{equation}
	\label{lead-2}\begin{split}
	\fint_{-\pi L_0}^{\pi L_0}\int_{\Rin}^{\Rout}
	\frac{(\partial_{\theta\theta}\xi^L)^2}{r^3}
	 \di r\di\theta&\le (1+o_L(1))	\fint_{-\pi L_0}^{\pi L_0}\int_{\Rin}^{R_0-\delta}	\frac{(\partial_{\theta\theta}\hat\xi^L)^2}{r^3}  \di r\di\theta\,.
	\end{split}
\end{equation}
By \eqref{lead-1}, \eqref{lead-2}, \eqref{eq:limsup} and the fact that $M\to+\infty$ 
we finally deduce
\begin{equation*}
	\limsup_{L\to+\infty}\pi
\fint_{-\pi L_0}^{\pi L_0}\int_{\Rin}^{\Rout}\Big(\dot u^\star(\partial_r\xi^L)^2  + \frac{(\partial_{\theta\theta}\xi^L)^2}{r^4} \Big)r\di r\di\theta
\le (1+\alpha)\mathcal{F}_\infty(\mu)+ C\lim_{L\to +\infty}\omega(2M^2\delta)\log M \,.
\end{equation*}
Eventually by the arbitrariness of $\alpha$ we infer the desired estimate.
\medskip

\noindent
\textit{Step 3-(iii): Estimate of the first term.} We show that
\begin{equation}\label{ub:step3}
L^2\strokedint_{-\pi L_0}^{\pi L_0} \int_{\Rin}^{\Rout}
\biggl( \partial_ru_r^L+\frac{(\partial_r\xi^L)^2}{2L^2}-\dot u^\star \biggr)^2 r\di r\di\theta \lesssim 
\frac {L_0^4}{L^2} \frac1{\delta^2\eps}\lesssim \frac {L_0^4}{L^2}\frac{1}{\delta^3M}\,.
\end{equation}
By Young's inequality we split in two terms
\begin{equation}\label{ub:est1}
	\begin{split}
L^2\strokedint_{-\pi L_0}^{\pi L_0} \int_{\Rin}^{\Rout}
&\biggl( \partial_ru_r^L+\frac{(\partial_r\xi^L)^2}{2L^2}-\dot u^\star \biggr)^2 r\di r\di\theta \\
&\lesssim  \fint_{-\pi L_0}^{\pi L_0} \int_{\Rin}^{\Rout}
\frac{(\partial_r\xi^L)^4}{L^2} r\di r\di\theta
+L^2
\strokedint_{-\pi L_0}^{\pi L_0}\int_{\Rin}^{\Rout}
\bigl( \partial_ru_r^L-\dot u^\star \bigr)^2r \di r\di\theta\,.
	\end{split}
\end{equation}
We start from the first term on the right hand-side of \eqref{ub:est1}. 
By \eqref{der:ux} we have
\begin{equation}\label{eq:est-term1}
	\begin{split}
	\fint_{-\pi L_0}^{\pi L_0} \int_{\Rin}^{\Rout}&
	{(\partial_r \xi^L)^4} r\di r\di\theta \lesssim 	\fint_{-\pi L_0}^{\pi L_0} \int_{\Rin}^{R_0-\delta}(f^L)^4
	{(\partial_r\hat \xi^L)^4} r\di r\di\theta\\&+ 
	\frac{1}{\delta^4}
	\fint_{-\pi L_0}^{\pi L_0} \int_{R_0-2\delta}^{R_0-\delta}(f^L)^4
	{(\hat \xi^L)^4}r \di r\di\theta
	+ 	\fint_{-\pi L_0}^{\pi L_0} \int_{\Rin}^{R_0-\delta}(\dot f^L)^4	{(\hat \xi^L)^4}r \di r\di\theta\,.
	\end{split}
\end{equation}
Therefore \eqref{eq:f(x)} and \eqref{eq:ux} give
\begin{equation}
	\fint_{-\pi L_0}^{\pi L_0} \int_{\Rin}^{R_0-\delta}(f^L)^4
{(\partial_r\hat \xi^L)^4} r\di r\di\theta\lesssim \frac{L_0^2}{\eps^2}\,,
\end{equation}
whereas from \eqref{eq:f(x)}, \eqref{eq:est-u^4}, and the fact that $r\in (R_0-2\delta,R_0-\delta)$ we get
\begin{equation}
	\begin{split}
	\frac{1}{\delta^4}
\fint_{-\pi L_0}^{\pi L_0} \int_{R_0-2\delta}^{R_0-\delta}(f^L)^4
{(\hat \xi^L)^4}r \di r\di\theta
& \lesssim \frac{1}{\delta^4} \frac{\delta^2}{\eps^2} L_0^2\int_{R_0-2\delta}^{R_0-\delta}(\max\{(R_0-r),\eps\})^2
\dx
\lesssim \frac{L_0^2}{\delta}\,.
	\end{split}
\end{equation}
Finally by  \eqref{eq:f'(x)-f''(x)}   and \eqref{eq:est-u^4} 
\begin{equation}\label{eq:est-term1-3}
\begin{split}
	\fint_{-\pi L_0}^{\pi L_0} \int_{\Rin}^{R_0-\delta}(\dot f^L)^4	{(\hat \xi^L)^4}r \di r\di\theta &\lesssim
	L_0^2 \int^{R_0-\delta}_{\Rin} \frac{(\max\{(R_0-r),\eps\})^2}{(R_0-r)^2\eps^2}
	\di r\lesssim L_0^2\left(\frac1\delta+\frac1{\eps^2}\right)
	\,.
\end{split}
\end{equation}
Thus gathering together \eqref{eq:est-term1}--\eqref{eq:est-term1-3} we infer
\begin{equation}\label{eq:est-ux^4}
	\fint_{-\pi L_0}^{\pi L_0} \int_{\Rin}^{\Rout}
\frac{(\partial_r \xi^L)^4}{L^2} r\di  r\di\theta
\lesssim \frac{L_0^2}{L^2}\left(\frac1\delta+\frac1{\eps^2}\right)\,.
\end{equation}
 We now pass to estimate the second term on the right hand side of \eqref{ub:est1}. 
 By definition we have 
 \begin{equation}
\partial_ru^L_r-\dot u^\star=\frac1{L^2} \partial_ru_r^{0,L}
=- \frac1{L^2} \int_0^\theta \partial_r(r\partial_ru_\theta^L-u_\theta^L+\partial_r\xi^L\partial_\theta\xi^L)\di\hat\theta\,.
 \end{equation}
From \eqref{def:recovery}, \eqref{def:recovery2}, \eqref{eq:u-theta-1} and integrating by parts we find
\begin{equation}\label{ub:est5}
	\begin{split}
	r\partial_ru_\theta^L-u_\theta^L+\partial_r\xi^L\partial_\theta\xi^L&=
	r\partial_ru_\theta^{0,L}-u_\theta^{0,L}+\partial_r\xi^L\partial_\theta\xi^L-\fint_{-\pi L_0}^{\pi L_0}
	\left(
	r\partial_ru_\theta^{0,L}-u_\theta^{0,L}+\partial_r\xi^L\partial_\theta\xi^L
	\right)\di\theta
	\\&= \int_0^\theta \left(\frac{(\partial_\theta\xi^L)^2}{r}-\partial_\theta\xi^L\partial_{\theta r}\xi^L\right)\di\hat\theta+ \partial_r\xi^L\partial_\theta\xi^L+ B^L(r)\theta+C^L(r)\\&=
	 \int_0^\theta \left(\frac{(\partial_\theta\xi^L)^2}{r}+\partial_{\theta\theta}\xi^L\partial_{r}\xi^L\right)\di\hat\theta+ \partial_r\xi^L\partial_\theta\xi^L|_{\theta=0}+B^L(r)\theta+C^L(r)\\
	 &=  \int_0^\theta \left(\frac{(\partial_\theta\xi^L)^2}{r}+\partial_{\theta\theta}\xi^L\partial_{r}\xi^L\right)\di\hat\theta+B^L(r)\theta+D^L(r)
\,,
		\end{split}
\end{equation}
where 
\begin{equation*}
	B^L(r):= -r(\psi_\delta^2(r)u^\star(r))'+\psi_\delta^2(r)u^\star(r)\,,
\end{equation*}
\begin{equation*}
	C^L(r):=-\fint_{-\pi L_0}^{\pi L_0}
	\left(
	r\partial_ru_\theta^{0,L}-u_\theta^{0,L}+\partial_r\xi^L\partial_\theta\xi^L
	\right)\di\theta\,,\
\end{equation*}
and 
\begin{equation*}
D^L(r):=	C^L(r)+ \partial_r\xi^L\partial_\theta\xi^L|_{\theta=0}= \fint_{-\pi L_0}^{\pi L_0} \int_0^\theta \left(\frac{(\partial_\theta\xi^L)^2}{r}+\partial_{\theta\theta}\xi^L\partial_{r}\xi^L\right)\di\hat\theta\di\theta\,.
\end{equation*}

Using \eqref{ub:est5} we get
\begin{equation*}
\begin{split}
\partial_ru^L_r-\dot u^\star& 
= - \frac1{L^2} \left[  \int_0^\theta  \int_0^{\hat\theta} \partial_r\left(\frac{(\partial_\theta\xi^L)^2}{r}+\partial_{\theta\theta}\xi^L\partial_{r}\xi^L\right)\di\tilde\theta \di\hat\theta+\dot B^L(r)\frac{\theta^2}2+\dot D^L(r)\theta\right]
\\&
= - \frac1{L^2} \biggl[  \int_0^\theta  \int_0^{\hat\theta} \left(-\frac{(\partial_\theta\xi^L)^2}{r^2}
+\frac{2\partial_\theta\xi^L\partial_{\theta r}\xi^L}{r}
+\partial_{\theta\theta r}\xi^L\partial_{r}\xi^L
+\partial_{\theta\theta}\xi^L\partial_{rr}\xi^L
\right)\di\tilde\theta \di\hat\theta\\&\qquad\qquad\qquad \qquad\qquad\qquad \qquad\qquad \qquad\qquad\qquad+\dot B^L(r)\frac{\theta^2}2+\dot D^L(r)\theta\biggr]
\,.
\end{split}
\end{equation*}
This together with Young's inequality give the estimate
\begin{equation}\label{ub:est6}
	\begin{split}
 L^2
\strokedint_{-\pi L_0}^{\pi L_0}\int_{\Rin}^{\Rout}&
\bigl( \partial_ru_r^L-\dot u^\star \bigr)^2 r \di r\di\theta
\\ &
\lesssim
\frac 1{L^2}\strokedint_{-\pi L_0}^{\pi L_0}\int_{\Rin}^{R_0-\delta}\left[
\int_0^\theta  \int_0^{\hat\theta}  \left(\frac{-(\partial_\theta\xi^L)^2}{r^2}+ 
\frac{2\partial_\theta\xi^L\partial_{\theta r}\xi^L}{r}
\right) \di\tilde\theta \di\hat\theta\right]^2
r \di r\di\theta\\
& +\frac 1{L^2}\strokedint_{-\pi L_0}^{\pi L_0}\int_{\Rin}^{R_0-\delta}\left[
\int_0^\theta  \int_0^{\hat\theta}  \left(\partial_{\theta\theta r}\xi^L\partial_{r}\xi^L+
\partial_{\theta\theta}\xi^L\partial_{rr}\xi^L
\right) \di\tilde\theta \di\hat\theta\right]^2
r \di r\di\theta
\\& +
\frac 1{L^2}\strokedint_{-\pi L_0}^{\pi L_0}\int_{\Rin}^{R_0-\delta} (\dot B^L(r))^2\theta^4
r \di r\di\theta
\\&+
\frac 1{L^2}\strokedint_{-\pi L_0}^{\pi L_0}\int_{\Rin}^{R_0-\delta} (\dot D^L(r))^2\theta^2
r \di r\di\theta
	\,.
	\end{split}
\end{equation}
We now estimate each term on the right-hand side of \eqref{ub:est6}. First, observe that $D^L(r)$ is defined as the $\theta$-average of the quantity $ \int_0^\theta \left( \frac{(\partial_\theta\xi^L)^2}{r} + \partial_{\theta\theta}\xi^L\,\partial_r\xi^L \right) \di\hat\theta$ .
Hence, by Young's inequality, the last term in \eqref{ub:est6} can be controlled by the sum of two terms of the same form as the first and second terms on the right-hand side of \eqref{ub:est6}, with the integral over $(0,\theta)$ replaced by the corresponding average and multiplied by a factor $L_0^2$. Consequently, it has the same asymptotic behaviour of the first and second term.
For the third term it holds 
\begin{equation*}\begin{split}
\dot B^L(r)
&=  -r \big(2(\dot\psi_\delta(r))^2u^\star(r)+ 2\psi_\delta(r)\ddot\psi_\delta(r)u^\star(r)+ 
4\psi_\delta(r)\dot\psi_\delta(r)\dot u^\star(r)+ \psi_\delta^2(r)\ddot u^\star(r)
\big)\,.
	\end{split}
\end{equation*}
This together with \eqref{psi-delta} and the identity $u^\star(r)=C\log\frac r{R_0}$ in $(\Rin,R_0)$ imply
\begin{equation}\label{ub:est7}
	\begin{split}
\frac 1{L^2}\strokedint_{-\pi L_0}^{\pi L_0}\int_{\Rin}^{\Rout} (\dot B^L(r))^2\theta^4
r \di r\di\theta
& \lesssim 
\frac{L_0^4}{L^2}	\int_{R_0-2\delta}^{R_0-\delta}((\dot B^L(r))^2r
\di r\\&
+ 
\frac{L_0^4}{L^2}	\int_{\Rin}^{R_0-2\delta}((\psi_\delta^2(r)\ddot u^\star(r))^2r
\di r\\
&\lesssim \frac{L_0^4}{L^2} \frac1{\delta}+ \frac{L_0^4}{L^2}\lesssim \frac{L_0^4}{L^2} \frac1{\delta}
 \,.
	\end{split}
\end{equation}
We now estimate the first and the second term on the right hand-side of \eqref{ub:est6}. 
We first observe that if $a,b,c,d$ are $2\pi L_0$-periodic then by applying in order   H\"older, Young and Jensen inequalities we have
\begin{equation}\label{stima-algebrica}
	\begin{split}
		\bigg[
		\int_0^\theta\int_0^{\hat\theta}&(ab+cd)\di\tilde \theta\di\hat\theta
		\bigg]^2
		\le \left[
		\int_0^\theta  \|a\|_{L^2(0,\hat\theta)}\|b\|_{L^2(0,\hat\theta)}
		+ \|c\|_{L^2(0,\hat\theta)}\|d\|_{L^2(0,\hat\theta)}
		\di\hat\theta
		\right]^2\\
		& \lesssim 
		\left[
		\int_0^\theta  \|a\|_{L^2(0,\hat\theta)}\|b\|_{L^2(0,\hat\theta)}
		\di\hat\theta
		\right]^2
		+ \left[
		\int_0^\theta
		 \|c\|_{L^2(0,\hat\theta)}\|d\|_{L^2(0,\hat\theta)}
		\di\hat\theta
		\right]^2\\
		& \lesssim L_0^2 \fint_{-\pi L_0}^{\pi L_0} \|a\|^2_{L^2(0,\hat\theta)}\|b\|^2_{L^2(0,\hat\theta)}
		\di\hat\theta+ 
		L_0^2 \fint_{-\pi L_0}^{\pi L_0} \|c\|^2_{L^2(0,\hat\theta)}\|d\|^2_{L^2(0,\hat\theta)}
		\di\hat\theta\\
	&\lesssim
	L_0^4\left[
	\bigg(\fint_{-\pi L_0}^{\pi L_0}a^2\di\theta\bigg)
	\bigg(\fint_{-L_0}^{L_0}b^2\di\theta\bigg)+
	\bigg(\fint_{-\pi L_0}^{\pi L_0}c^2\di \theta\bigg)
	\bigg(\fint_{-\pi L_0}^{\pi L_0}d^2\di\theta\bigg)
	\right]
		\,.	\end{split}
\end{equation}
Therefore it follows that 
\begin{equation}\label{ub:est00}
	\begin{split}
		\frac 1{L^2}\strokedint_{-\pi L_0}^{\pi L_0}&\int_{\Rin}^{R_0-\delta}\left[
		\int_0^\theta  \int_0^{\hat\theta}  \left(
	\frac{2	\partial_{\theta }\xi^L\partial_{\theta r}\xi^L}r
		-
		\frac{(\partial_{\theta}\xi^L)^2}{r^2}
		\right) 
		\di\tilde\theta \di\hat\theta\right]^2
		r \di r\di\theta
		\\
		&\lesssim  \frac {L_0^4}{L^2}
		\int_{\Rin}^{R_0-\delta}\bigg(\fint_{-\pi L_0}^{\pi L_0}(\partial_{\theta}\xi^L )^2\di\theta\bigg)\bigg(\fint_{-\pi L_0}^{\pi L_0}(\partial_{\theta r}\xi^L )^2\di\theta\bigg)
		\di r \\
		&+  \frac {L_0^4}{L^2}
		\int_{\Rin}^{R_0-\delta}
		\bigg(\fint_{-\pi L_0}^{\pi L_0}(\partial_{\theta }\xi^L )^2\di\theta\bigg)\bigg(\fint_{-\pi L_0}^{\pi L_0}(\partial_\theta\xi^L )^2\di\theta\bigg)
			\di r
		\,.
	\end{split}
\end{equation}and
\begin{equation}\label{ub:est0}
	\begin{split}
		\frac 1{L^2}\strokedint_{-\pi L_0}^{\pi L_0}&\int_{\Rin}^{R_0-\delta}\left[
		\int_0^\theta  \int_0^{\hat\theta}  \left(\partial_{\theta\theta }\xi^L\partial_{rr}\xi^L+\partial_{\theta\theta r}\xi^L\partial_{r}\xi^L
		\right) 
		\di\tilde\theta \di\hat\theta\right]^2
		r \di r\di\theta
	\\
		&\lesssim  \frac {L_0^4}{L^2}
\int_{\Rin}^{R_0-\delta}\bigg(\fint_{-\pi L_0}^{\pi L_0}(\partial_{\theta\theta}\xi^L )^2\di\theta\bigg)\bigg(\fint_{-\pi L_0}^{\pi L_0}(\partial_{rr}\xi^L )^2\di\theta\bigg)
		\di r \\
		&+  \frac {L_0^4}{L^2}
	\int_{\Rin}^{R_0-\delta}
	\bigg(\fint_{-\pi L_0}^{\pi L_0}(\partial_{\theta\theta r}\xi^L )^2\di\theta\bigg)\bigg(\fint_{-\pi L_0}^{\pi L_0}(\partial_r\xi^L )^2\di\theta\bigg)
		\di r
		 \,.
	\end{split}
\end{equation}
We  show separately the following estimates for the partial derivatives of  $\xi^L$:
\begin{equation}\label{uy+uyy+ux}
		\fint_{-\pi L_0}^{\pi L_0}(\partial_{\theta}\xi^L)^2\di\theta\lesssim 1\,,\quad
	\fint_{-\pi L_0}^{\pi L_0}(\partial_{\theta\theta}\xi^L)^2\di\theta\lesssim {\frac1\eps}\,,\quad 
	\fint_{-\pi L_0}^{\pi L_0}(\partial_r\xi^L)^2\di\theta\lesssim\frac{\max\{(R_0-r),\eps\}}{(R_0-r)\eps}\,,
\end{equation}
\begin{equation}\label{uxx+uyx}
\int_{\Rin}^{R_0-\delta}\fint_{-\pi L_0}^{\pi L_0}(\partial_{rr}\xi^L)^2\di\theta r\di r \lesssim { \frac1{\delta^2}}\,,
\quad
\int_{\Rin}^{R_0-\delta}\fint_{-\pi L_0}^{\pi L_0}(\partial_{\theta r}\xi^L)^2\di\theta r\di r\lesssim\frac1{\delta\eps} \,,
\end{equation}
\begin{equation}\label{uyyx}
\int_{\Rin}^{R_0-\delta}\fint_{-\pi L_0}^{\pi L_0}(\partial_{\theta\theta r}\xi^L)^2\di\theta r\di r\lesssim\frac1{\delta\eps}\,.
\end{equation}
 %
Since $\partial_{\theta}\xi^L=\psi_\delta(r)f^L(r)\partial_{\theta}\hat \xi^L$ and $\partial_{\theta\theta}\xi^L=\psi_\delta(r)f^L(r)\partial_{\theta\theta}\hat \xi^L$   by \eqref{eq:f(x)}, \eqref{eq:AL} and \eqref{eq:uyy+ux} we have
 \begin{equation*}
 	\label{ub:est-y-yy}
 		\fint_{-\pi L_0}^{\pi L_0}(\partial_{\theta}\xi^L)^2\di\theta
 	\lesssim 1\,,\quad 
 		\fint_{-\pi L_0}^{\pi L_0}(\partial_{\theta\theta}\xi^L)^2\di\theta
\lesssim \frac{1}{\eps}\,.
 \end{equation*}
By \eqref{der:ux}, \eqref{eq:f(x)}, \eqref{eq:uyy+ux}, \eqref{eq:est-u} and \eqref{eq:f'(x)-f''(x)} we have
\begin{equation*}\label{ub:est-x}
	\begin{split}
	\fint_{-\pi L_0}^{\pi L_0}(\partial_r\xi^L)^2\di\theta
	&	\lesssim  \fint_{-\pi L_0}^{\pi L_0}(f^L(r))^2(\partial_r\hat \xi^L)^2\di \theta
	\\&	+ \frac1{\delta^2}\fint_{-\pi L_0}^{\pi L_0}\chi_{(R_0-2\delta,R_0-\delta)}(f^L(r))^2(\hat \xi^L)^2\di\theta
		+ \fint_{-\pi L_0}^{\pi L_0}(\dot f^L(r))^2(\hat \xi^L)^2\di\theta\\
		&\lesssim \frac1\eps+ \frac1{\delta^2}\frac\delta\eps\max\{(R_0-r),\eps\}\chi_{(R_0-2\delta,R_0-\delta)}+ \frac{\max\{(R_0-r),\eps\}}{(R_0-r)\eps}\\
	&	\lesssim \frac1\eps +\frac{1}{\delta}\chi_{(R_0-2\delta,R_0-\delta)}+ \frac{\max\{(R_0-r),\eps\}}{(R_0-r)\eps}\lesssim \frac{\max\{(R_0-r),\eps\}}{(R_0-r)\eps}
		\,.
	\end{split}
\end{equation*}
From  \eqref{der:ux} we calculate the second derivative in $r$
\begin{equation*}
	\begin{split}
		\partial_{rr}\xi^L&=\psi_\delta(r)f^L(r)\partial_{rr}\hat{\xi}^L+\psi_\delta(r)\ddot f^L(r)\hat{\xi}^L+	\ddot\psi_\delta(r)f^L(r)\hat{\xi}^L \\
		&+ 2\dot\psi_\delta(r)f^L(r)\partial_r\hat{\xi}^L+ 2\dot\psi_\delta(r)\dot f^L(r)\hat{\xi}^L+ 2\psi_\delta(r)\dot f^L(r)\partial_r\hat{\xi}^L\,.
	\end{split}
\end{equation*}
Hence by \eqref{eq:f(x)}, \eqref{eq:uxx-uxy-uxyy}, \eqref{eq:f'(x)-f''(x)}, \eqref{eq:est-u} and \eqref{eq:limsup} we have
\begin{equation}\label{ub:est-xx}
	\begin{split}
\int_{\Rin}^{R_0-\delta}\fint_{-\pi L_0}^{\pi L_0}&(\partial_{rr}\xi^L)^2\di\theta r\di r
		\lesssim \int_{\Rin}^{R_0-\delta}\fint_{-\pi L_0}^{\pi L_0}(f^L(x))^2(\partial_{rr}\hat \xi^L)^2\di\theta r\di r+ \int_{\Rin}^{R_0-\delta}\fint_{-\pi L_0}^{\pi L_0}(\ddot f^L(r))^2(\hat \xi^L)^2\di\theta r\di r\\
		& 	+ \int_{\Rin}^{R_0-\delta}\fint_{-\pi L_0}^{\pi L_0}(\dot f^L(r))^2(\partial_r\hat \xi^L)^2\di\theta r\di r+
	 \frac{1}{\delta^4}
	 \int_{R_0-2\delta}^{R_0-\delta}\fint_{-\pi L_0}^{\pi L_0}(f^L(r))^2(\hat \xi^L)^2\di\theta r\di r\\
		& 
		+ \frac{1}{\delta^2}
\int_{R_0-2\delta}^{R_0-\delta}\fint_{-\pi L_0}^{\pi L_0}
		(\dot f^L(r))^2(\hat \xi^L)^2\di\theta r\di r
		+\frac{1}{\delta^2}
	\int_{R_0-2\delta}^{R_0-\delta}\fint_{-\pi L_0}^{\pi L_0}
		(f^L(r))^2(\partial_r\hat \xi^L)^2\di \theta r\di r\\
		& \lesssim \frac1{\eps^2} +  \int_{\Rin}^{R_0-\delta}\frac{\max\{(R_0-r),\eps\}}{(R_0-r)^3\eps} \di r+ \frac{1}{\delta\eps}
+ \frac1{\delta^4}\int_{R_0-2\delta}^{R_0-\delta}\frac{R_0-r}\eps\max\{(R_0-r),\eps\}\di r\\
&
+ \frac{\max\{(R_0-r),\eps\}}{\delta^2}\int_{R_0-2\delta}^{R_0-\delta}\frac1{(R_0-r)\eps}\di r+\frac1{\delta^2}\\&\lesssim 
\frac1{\eps^2}+ \frac{1}{\delta^2} + \frac1{\delta\eps}\lesssim \frac1{\delta^2}
	\,.
	\end{split}
\end{equation}
Analogously by \eqref{der:ux} it follows
\begin{equation}\label{der:uxy}
	\partial_{\theta r}	\xi^L= \psi_\delta(r)f^L(r)\partial_{\theta r}\hat \xi^L+ \dot\psi_\delta(r)f^L(r)\partial_{\theta}\hat \xi^L+
	\psi_\delta(r)\dot f^L(r)\partial_{\theta}\hat \xi^L\,,
\end{equation}
and
\begin{equation}\label{der:uxyy}
\partial_{\theta\theta r}	\xi^L= \psi_\delta(r)f^L(r)\partial_{\theta\theta r}\hat \xi^L+ \dot\psi_\delta(r)f^L(r)\partial_{\theta\theta}\hat \xi^L+
	\psi_\delta(r)\dot f^L(r)\partial_{\theta\theta}\hat \xi^L\,.
\end{equation}
Therefore, recalling  \eqref{eq:uxx-uxy-uxyy}, \eqref{eq:f(x)}, \eqref{eq:uyy+ux} and \eqref{eq:f'(x)-f''(x)} we find
\begin{equation*}\label{ub:est-yx}
	\begin{split}
		\int_{\Rin}^{R_0-\delta}\fint_{-\pi L_0}^{\pi L_0}(\partial_{\theta r}\xi^L)^2\di\theta r\di r&\lesssim
		\int_{\Rin}^{R_0-\delta}\fint_{-\pi L_0}^{\pi L_0}(f^L(r))^2(\partial_{\theta r}\hat \xi^L)^2\di\theta r\di r
		+
		\frac{1}{\delta^2}
		\int_{R_0-2\delta}^{R_0-\delta}\fint_{-\pi L_0}^{\pi L_0}(\partial_{\theta}\hat\xi^L)^2\di\theta r\di r
		\\&
		+
		\int_{\Rin}^{R_0-\delta}\fint_{-\pi L_0}^{\pi L_0}(\dot f(r))^2(\partial_{\theta}\hat\xi^L)^2\di\theta r\di r
		\lesssim\frac1{\eps^2} + \frac1{\delta^2}\frac{\delta}{\eps}+ \frac1{\delta\eps} \lesssim\frac1{\delta\eps}\,.
	\end{split} 
\end{equation*}
and in a similar way 
\begin{equation*}\label{ub:est-yyx}
	\begin{split}
\int_{\Rin}^{R_0-\delta}\fint_{-\pi L_0}^{\pi L_0}(\partial_{\theta\theta r}\xi^L)^2\di\theta r\di r
 \lesssim\frac1{\delta\eps}\,.
	\end{split} 
\end{equation*}
Thus combining \eqref{ub:est00} with \eqref{uy+uyy+ux}, \eqref{uxx+uyx} we have 
\begin{equation}\label{ub:est001}
	\begin{split}
		\frac 1{L^2}\strokedint_{-\pi L_0}^{\pi L_0}\int_{\Rin}^{R_0-\delta}\left[
		\int_0^\theta  \int_0^{\hat\theta}  \left(
		\frac{2	\partial_{\theta }\xi^L\partial_{\theta r}\xi^L}r
		-
		\frac{(\partial_{\theta}\xi^L)^2}{r^2}
		\right) 
		\di\tilde\theta \di\hat\theta\right]^2
		r \di r\di\theta
		&\lesssim \frac{L_0^4}{L^2}\frac1{\delta\eps}
		\,.
	\end{split}
\end{equation}
Analogously \eqref{ub:est0} together with \eqref{uy+uyy+ux}--\eqref{uyyx} imply
\begin{equation}\label{ub:est01}
	\begin{split}
		\frac 1{L^2}\strokedint_{-\pi L_0}^{\pi L_0}\int_{\Rin}^{R_0-\delta}\left[
		\int_0^\theta  \int_0^{\hat\theta}  \left(\partial_{\theta\theta }\xi^L\partial_{rr}\xi^L+\partial_{\theta\theta r}\xi^L\partial_{r}\xi^L
		\right) 
		\di\tilde\theta \di\hat\theta\right]^2
		r \di r\di\theta
		&
	 \lesssim \frac {L_0^4}{L^2} \frac1{\delta^2\eps}
		\,.
	\end{split}
\end{equation}
Gathering together 
\eqref{ub:est6}, \eqref{ub:est7}, \eqref{ub:est01}  and \eqref{ub:est001} we infer
\begin{equation*}
	L^2	\strokedint_{-\pi L_0}^{\pi L_0}\int_{\Rin}^{\Rout}
\bigl( \partial_ru_r^L-\dot u^\star\bigr)^2 r\di r\di \theta\lesssim  \frac {L_0^4}{L^2} \frac1{\delta^2\eps}\,,
\end{equation*}
which together with \eqref{eq:est-ux^4} and \eqref{ub:est1} implies 
\begin{equation*}
	\begin{split}
		L^2	\strokedint_{-\pi L_0}^{\pi L_0}\int_{\Rin}^{\Rout}
	\bigl( \partial_ru_r^L+\frac{(\partial_r\xi^L)^2}{2L^2}-\dot u^\star\bigr)^2 r\di r\di \theta
		\lesssim \frac{L_0^2}{L^2}\left(\frac1\delta+\frac1{\eps^2}\right)+
		 \frac {L_0^4}{L^2} \frac1{\delta^2\eps}\lesssim  \frac {L_0^4}{L^2} \frac1{\delta^2\eps}
	\,.
	\end{split}
\end{equation*}

\noindent
\textit{Step 3-(iv): Estimate of the third term.} we show that
\begin{equation}\label{ub:step4}
L^2 \strokedint_{-\pi L_0}^{\pi L_0} \int_{\Rin}^{\Rout} \Big (\frac{u_r^L}{r}+\frac{\partial_\theta u_\theta^L}{r} +\frac{(\partial_\theta\xi^L)^2}{2r^2} -\frac{(u^\star)_+}{r} \Big)^2 r\di r\di\theta \lesssim\frac{L_0^4}{L^2}\frac1{\delta\eps}
\lesssim\frac{L_0^4}{L^2}\frac1{\delta^2 M}
\,.
\end{equation}
Recalling the definition of $u_r^L$ and $u_\theta^L$ it holds
\begin{equation*}
\begin{split}
\frac{u_r^L}{r}+
\frac{	\partial_\theta u_\theta^L}{r}+\frac{(\partial_\theta\xi^L)^2}{2r^2} -\frac{(u^\star)_+}{r} &= \frac{u^\star}{r}(1-\psi_\delta^2(r))\chi_{(R_0-2\delta,R_0)}\\&-\frac{1}{L^2}\int_0^\theta(r\partial_ru_\theta^L-u_\theta^L+\partial_r\xi^L\partial_\theta\xi^L)\di\hat\theta\,.
\end{split}
\end{equation*}
Therefore 
\begin{equation*}
\begin{split}
L^2 \strokedint_{-\pi L_0}^{\pi L_0} \int_{\Rin}^{\Rout} &\Big (\frac{u_r^L}{r}+\frac{\partial_\theta u_\theta^L}{r} +\frac{(\partial_\theta\xi^L)^2}{2r^2} -\frac{(u^\star)_+}{r} \Big)^2 r\di r\di\theta\\&\le L^2 \int_{R_0-2\delta}^{R_0}
 \frac{(u^\star)^2}{r}(1-
 \psi_\delta^2(r)  )^2\di r \\&+ \frac1{L^2}\fint_{-\pi L_0}^{\pi L_0}\int_{\Rin}^{\Rout}\Big(\int_{0}^{\theta}(r\partial_ru_\theta^L-u_\theta^L+\partial_r\xi^L\partial_\theta\xi^L)\di\hat\theta
 \Big)^2r\di r\di \theta
 \,.
\end{split}
\end{equation*}
For the first term on the right hand side we simply have 
\begin{equation*}
\begin{split}
 L^2 \int_{R_0-2\delta}^{R_0}
\frac{(u^\star)^2}{r}(1-
\psi_\delta^2(r)  )^2\di r\le L^2\int_{R_0-2\delta}^{R_0}  \frac{(u^\star)^2}{r}\di r \lesssim L^2\int_{R_0-2\delta}^{R_0} (r-R_0)^2\di r\lesssim L^2 \delta^3 \,.
\end{split}
\end{equation*}
Recalling \eqref{ub:est5}  and \eqref{stima-algebrica} the second term  satisfies  
\begin{equation}
\begin{split}
\frac1{L^2}&\fint_{-\pi L_0}^{\pi L_0}\int_{\Rin}^{\Rout}\Big(\int_{0}^{\theta}(r\partial_ru_\theta^L-u_\theta^L+\partial_r\xi^L\partial_\theta\xi^L)\di\hat\theta
\Big)^2r\di r\di \theta
\\&\lesssim \frac{L_0^4}{L^2} \int_{\Rin}^{R_0-\delta} \left[
\fint_{-\pi L_0}^{\pi L_0} (\partial_\theta\xi^L)^2\di\theta \fint_{-\pi L_0}^{\pi L_0} (\partial_\theta\xi^L)^2\di\theta + \fint_{-\pi L_0}^{\pi L_0} (\partial_{\theta\theta}\xi^L)^2\di\theta \fint_{-\pi L_0}^{\pi L_0} (\partial_r\xi^L)^2\di\theta
\right]r\di r\\&
+ \frac{L_0^4}{L^2} \int_{\Rin}^{R_0-\delta}( B^L(r))^2 r\di r  + \frac{L_0^2}{L^2} \int_{\Rin}^{R_0-\delta}( D^L(r))^2 r\di r 
\\
& \lesssim \frac{L_0^4}{L^2}\left(1+\frac1\delta+\frac{1}{\eps\delta}\right)
 \lesssim \frac{L_0^4}{L^2}\frac{1}{\eps\delta}\,,
\end{split}
\end{equation}
where the second inequality follow from \eqref{uy+uyy+ux} and arguing similarly to step 3-(iii).

\medskip

\noindent
\textit{Step 3-(v): Estimate of the sixth term.} From \eqref{uxx+uyx} we have 
\begin{equation}\label{ub:step6}
\frac{1}{L^2}
\strokedint_{-\pi L_0}^{\pi L_0} \int_{\Rin}^{\Rout} 
 \Big(\frac{(\partial_{rr}\xi)^2}{L^2}+\frac{2(\partial_{r\theta}\xi)^2}{r^2}\Big)	
 r \di r\di \theta
\lesssim \frac1{L^2}\left(\frac1{\delta^2}+\frac1{\delta\eps}\right) \lesssim  \frac1{L^2}\frac1{\delta^2}
\,.
\end{equation}
\medskip

\noindent
\textit{Conclusions.}  By Step 2 we have that $(u_r^L,u_\theta^L,\xi^L)$ converges to $\mu$ in the sense of Definition \ref{def:convergence}.
Moreover  by collecting the estimates showed in Steps 3-(i)--3-(v), i.e., \eqref{ub:step1.0}, \eqref{ub:step1.1}
 \eqref{ub:step2}, \eqref{ub:step3}, \eqref{ub:step4} and \eqref{ub:step6} we find
\begin{equation}\label{the-end0}
	\begin{split}
\limsup_{L\to+\infty}L^2(\mathcal E_L(w^L,u^L)-\mathcal E_0)&\le \mathcal{F}_\infty(\mu)+ C\lim_{L\to +\infty}
\omega(2M^2\delta)\log M\\
&+C\lim_{L\to +\infty} \Big(\frac{L_0^4}{L^2}\frac1{\delta^3M}+\frac{L_0^4}{L^2}\frac1{\delta^2M} + \frac{1}{L^2\delta^2}
\Big)\,.
	\end{split}
\end{equation}
We now properly choose the parameters. We start by noticing  that for every $\overline M\in\mathbb N$ there exists $L_{\overline M}>\overline M^{4}$ such that
\begin{equation}\label{cond:M}
 \omega(2\overline M^2L^{-2/3})\le {\overline M}^{-1}\quad\forall L\ge L_{\overline M}\,.
\end{equation}
Since $L_{\overline M+1}\ge L_{\overline M}$ we set
$$ M=M(L):=\overline M\quad \text{ if } L\in[L_{\overline M},L_{\overline M+1})$$
Next we define
\begin{equation*}
	\delta:=\frac{\eps}{M}= L^{-2/3}M^{-1/8}\,\iff\, \eps= L^{-2/3}M^{7/8}\,,
\end{equation*}
and we choose 
\begin{equation*}
	L_0:=\frac Ln\in [M^{1/8},2M^{1/8})\,\iff\, n\in \Big[\frac{L}{2M^{1/8}},  \frac L{M^{1/8}} \Big)\,.
\end{equation*}
These choices ensures the validity of \eqref{cond_parameters1} and \eqref{cond_parameters2}. Indeed we have 
\begin{equation*}
	\eps= \frac1{L^{2/3}M^{-7/8}}= \frac1{L^{2/3}\overline M^{-7/8}}\le 
	\frac1{\overline M^{8/3}\overline M^{-7/8}}
	\quad\text{ if }L\in[L_{\overline M},L_{\overline M+1})\,,
\end{equation*}
where the last inequality follows from the fact that $L\ge L_{\overline M}\ge \overline M^{4}$. Hence $\eps\to0$ and $\delta\to0$ as $L\to+\infty$. In a similar way we have $M^2\delta=M\eps\to0$ and $L_0,n\to+\infty$ as $L\to+\infty$.\\
Recalling that $\omega$ is monotone we find
$$ \omega(2M^2\delta)=\omega(2M^2L^{-2/3}M^{-1/8})\le \omega(2M^2L^{-2/3})= \omega(2\overline M^2L^{-2/3}) \quad\text{ if }L\in[L_{\overline M},L_{\overline M+1})\,,$$ 
which together with \eqref{cond:M} imply 
\begin{equation}\label{the-end1}	
	\log M\omega(2M^2\delta)\le \log( \overline M)\overline M^{-1}\quad \text{ if } L\in[L_{\overline M},L_{\overline M+1})\,.
\end{equation}
Moreover  if  $L\in[L_{\overline M},L_{\overline M+1})$ it holds
\begin{equation}\label{the-end2}
L^2\delta^3= L^2L^{-2}M^{-3/8}=\overline M^{-3/8}\,,
\end{equation}
and
\begin{equation}\label{the-end4}
	 \frac{L_0^4}{L^2}\frac1{\delta^2M}  \le
 \frac{L_0^4}{L^2}\frac1{\delta^3M}  \le \frac{16M^{1/2}}{M M^{-3/8}}=16M^{-1/8}= 16\overline M^{-1/8} \,,
\end{equation}
 Eventually collecting \eqref{the-end0}--\eqref{the-end4} we infer 

\begin{equation*}
	\begin{split}
			\limsup_{L\to+\infty}\mathcal{F}_L(u_r^L,u_\theta^L,\xi^L)=
		\limsup_{L\to+\infty}L^2(\mathcal E_L(u_r^L,u_\theta^L,\xi^L)-\mathcal E_0)\le \mathcal{F}_\infty(\mu)\,.
	\end{split}
\end{equation*}
\end{proof}
\section{Existence and regularity of minimizers of $\mathcal{F}_\infty$}\label{sec:regularity}

In this section, we address the existence of minimizers for the limiting functional $\mathcal{F}_\infty$ and discuss some of their qualitative properties, including energy equipartition.

More precisely, the results of this section show, in particular, that minimizers of $\mathcal F_\infty$
admit a one-dimensional structure after disintegration in the k-variable. More precisely, almost every fiber is described by a BV density in the radial variable, and the two contributions appearing in the functional balance exactly through an equipartition identity. Moreover, condition \eqref{small-k} shows that minimizers do not carry mass on sufficiently small frequencies.

To this end, we introduce the notion of disintegration of measures with respect to the $k$-variable, which differs slightly from the disintegration with respect to the $r$-variable introduced in Section~\ref{sec:upb}.

The proofs of the results stated in this section are omitted, since they follow from minor modifications of the arguments in Section~6 of \cite{BeMa25}, together with the equivalent formulation of the functional given in \eqref{def:F_infty2}\\

\noindent
For a given interval $I\subset\R$ we denote by $L^0(I)$ the space of Lebesgue measurable functions $g\colon I\to \R$. Moreover, we let $\pi_2\colon I\times\R\to \R$ denote the canonical projection, and for any $\mu\in\mathcal{M}_b(I\times\R)$ we write $(\pi_2)_\sharp\mu\in \mathcal{M}_b^+(\R)$ for the push-forward of $\mu$ through $\pi_2$.

\begin{definition}[Disintegration of measures in the $k$-variable]\label{def:disintegration-in-k}
	Let $I\subset\R$ be an interval and let $\mu\in\mathcal{M}_b(I\times\R)$. We say that the family
	\begin{equation*}
		{\big(\lambda\,,\,(g_k)_{k\in\R}\big)}
		\quad\text{with}\quad
		\lambda\in \mathcal{M}_b(\R)
		\quad\text{and}\quad
		g_k\in L^0(I)\ \ \forall k\in \R\,,
	\end{equation*}
	is a disintegration of $\mu$ \(($in the $k$-variable$)\) if $k\mapsto g_k$ is $\lambda$-measurable, $\int_I g_k(r)\di r=1$ for $\lambda$-a.e. $k\in\R$, and
	\begin{equation}\label{eq:disint-in-k}
		\int_{I\times\R}f(r,k)\di\mu
		=
		\int_\R\int_I f(r,k)g_k(r)\di r\di\lambda(k)\,,
	\end{equation}
	for every $f\in L^1(I\times\R;|\mu|)$.
\end{definition}

With this definition in place, we can now state the main result of this section.

\begin{theorem}[Minimizers of $\mathcal{F}_\infty$]\label{thm:minimizers}
	Let $\mathcal{M}_\infty$ and $\mathcal{F}_\infty$ be as in \eqref{def:limit_measures} and \eqref{def:F_infty}, respectively. Then there exists $\hat\mu\in\mathcal{M}_\infty$ such that
	\begin{equation*}
		\mathcal{F}_\infty(\hat\mu)
		=
		\inf_{\mu\in\mathcal{M}_\infty}\mathcal{F}_\infty(\mu)\,.
	\end{equation*}
	Moreover, every minimizer $\hat\mu$ satisfies the following properties: there exist a constant $C>0$ and a $(\pi_2)_\sharp\hat\mu$-measurable map $k\mapsto g_k$, with $g_k\in BV(\Rin,R_0)$ for $(\pi_2)_\sharp\hat\mu$-a.e. $k\in\R$, such that
	\[
	{\big((\pi_2)_\sharp\hat\mu\,,\, (g_k)_{k\in\R}\big)}
	\quad \text{is a disintegration of }\hat\mu\,,
	\]
	and
	\begin{equation}\label{equipart-section}
		\int^{R_0}_{\Rin}\frac{k^2}{r^3}g_k(r)\di r
		=
		\int_{\Rin}^{R_0}
		\frac{\dot u^\star(r)r}{4k^2}
		\left(\frac{\di\hat\mu_{,r}}{\di\hat\mu}\right)^2
		g_k(r)\di r
		\quad
		\text{for } (\pi_2)_\sharp\hat\mu\text{-a.e. }k\in\R\,,
	\end{equation}
	and
	\begin{equation}\label{small-k}
		(\pi_2)_\sharp\hat\mu\Big(\big\{|k|<C\big\}\Big)=0\,.
	\end{equation}
\end{theorem}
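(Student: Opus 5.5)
Existence follows from the direct method in $\mathcal M_\infty$, using the representation \eqref{def:F_infty2}. Take a minimizing sequence $(\mu_j)\subset\mathcal M_\infty$ with $\sup_j\mathcal F_\infty(\mu_j)<\infty$. The mass is fixed by the marginal constraint, $\mu_j((\Rin,R_0)\times\R)=-\int_{\Rin}^{R_0}2ru^\star\,\di r$. Moreover, Young's inequality together with the lower bound $\dot u^\star\ge c>0$ gives $|\mu_{j,r}|\lesssim\mathcal F_\infty(\mu_j)$, exactly as in \eqref{eq:compactness}. So, up to a subsequence, $(\mu_j,\mu_{j,r})\stackrel{*}{\rightharpoonup}(\mu,\tilde\mu)$, and testing against $\partial_r\varphi$ identifies $\tilde\mu=\mu_{,r}$.

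Two points need care to show $\mu\in\mathcal M_\infty$. First, mass must not escape to $|k|\to\infty$ or to the boundary $r=\Rin,R_0$. The bound $\int k^2r^{-3}\di\mu_j\le C$ gives tightness in $k$. The marginal identity passes to the limit by the cut-off argument of Step 3 of Proposition \ref{prop:compactness}, testing against $\phi(r)\psi_K(k)$ and letting $K\to\infty$. Second, $\mu_{,r}\ll\mu$ follows from the truncation argument of Step 2 of Proposition \ref{prop:compactness} (the quadratic Benamou--Brenier bound on $\{|k|<N\}$). Lower semicontinuity then follows from Reshetnyak's theorem applied to \eqref{def:F_infty2}, as in Proposition \ref{prop:lower_bound}, since the integrands are convex, 1-homogeneous and lower semicontinuous in $(\mu,\mu_{,r})$.

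For the structure of a minimizer $\hat\mu$, set $\lambda:=(\pi_2)_\sharp\hat\mu$ and disintegrate $\hat\mu$ in the $k$-variable, obtaining probability densities in $r$ for $\lambda$-a.e.\ $k$. I first show these fibre measures are absolutely continuous in $r$ with BV densities $g_k$. Since $\mu_{,r}\ll\mu$ and $\mu_{,r}$ only differentiates in $r$, the distributional derivative of the fibre equals the fibre of $\hat\mu_{,r}$. Finiteness of the energy gives, for $\lambda$-a.e.\ $k$,
\[
\int\frac{1}{k^2}\Big(\frac{\di\hat\mu_{,r}}{\di\hat\mu}\Big)^2 g_k\,\di r<\infty .
\]
By Cauchy--Schwarz the total variation of $Dg_k$ is then finite. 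This makes each fibre a finite measure on an interval with derivative a finite measure, hence a BV function.

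The equipartition \eqref{equipart-section} comes from a dilation in $k$, which is the main obstacle. For $t>0$ and a Borel set $A\subset\R$, define $\hat\mu^t$ by replacing the part of $\hat\mu$ on $(\Rin,R_0)\times A$ with its push-forward under $k\mapsto tk$. This does not change the $r$-marginal, so the constraint survives. The Radon--Nikodym derivative $\di\mu_{,r}/\di\mu$ is invariant, so on $A$ the stretching term scales as $t^{-2}$ and the bending term as $t^2$. Minimality at $t=1$ gives, for every Borel set $A$, the identity $\int_{A}(\text{stretching}-\text{bending})\,\di\lambda=0$, and localizing in $A$ yields \eqref{equipart-section} $\lambda$-a.e. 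The delicate part is checking that $\hat\mu^t\in\mathcal M_\infty$ and that the map $t\mapsto\mathcal F_\infty(\hat\mu^t)$ is differentiable; this works because the dilation acts fibrewise.

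Finally, \eqref{small-k} follows from equipartition combined with a 1D Poincaré-type inequality on each fibre. Each $g_k$ integrates to $1$ on $(\Rin,R_0)$, and its energy satisfies
\[
\frac{1}{k^2}\int\dot u^\star r\,\big(\partial_r\sqrt{g_k}\big)^2\,\di r\ \gtrsim\ \frac{1}{k^2}\,c_0 .
\]
The lower bound $c_0>0$ holds because the constraint forces the fibres to vanish near $R_0$: otherwise the profile could not match $-2ru^\star\sim(R_0-r)$ there. A Hardy/Poincaré bound for $\sqrt{g_k}$ vanishing at $R_0$ then gives $c_0$. Equipartition gives $k^2\int r^{-3}g_k\gtrsim c_0/k^2$, so $k^4\gtrsim c_0$, i.e.\ $|k|\ge C>0$ for $\lambda$-a.e.\ $k$. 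If this vanishing argument is too weak, the fallback is the paper's contradiction argument: move the mass at small $|k|$ to a larger frequency and show the energy strictly decreases.
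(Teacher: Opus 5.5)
Your proposal is correct and follows essentially the route the paper indicates by reference to Section~6 of \cite{BeMa25}. That route consists of four pieces: the direct method with the Benamou--Brenier form \eqref{def:F_infty2} and Reshetnyak's theorem; BV fibres obtained from $\hat\mu_{,r}\ll\hat\mu$; equipartition from a fibrewise dilation $k\mapsto tk$ (this is the paper's ``contradiction argument''); and \eqref{small-k} from equipartition plus a Poincar\'e bound for $\sqrt{g_k}$, which vanishes at $R_0$ for $(\pi_2)_\sharp\hat\mu$-a.e.\ $k$ by Fatou's lemma since $-2ru^\star(r)\to0$ as $r\to R_0$.

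Two details need tightening. First, where $tA$ overlaps the untouched fibres the Radon--Nikod\'ym derivative gets averaged rather than preserved. With $S_A,B_A$ the stretching and bending parts of $\mathcal F_\infty(\hat\mu)$ over $(\Rin,R_0)\times A$, subadditivity of the convex $1$-homogeneous functional gives only $\mathcal F_\infty(\hat\mu^t)\le\mathcal F_\infty(\hat\mu)+(t^{-2}-1)S_A+(t^2-1)B_A$. This inequality suffices, and it makes differentiability of $t\mapsto\mathcal F_\infty(\hat\mu^t)$ unnecessary. Second, you must justify $\inf_{\mathcal M_\infty}\mathcal F_\infty<\infty$. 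This is not automatic: any measure with finitely many frequencies has infinite energy because $-2ru^\star$ vanishes linearly at $R_0$. It follows, e.g., from the upper bound in \eqref{energy-scaling} combined with Propositions~\ref{prop:compactness} and~\ref{prop:lower_bound}.
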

\begin{cor}[Equipartition of the energy]
	Let $\hat\mu\in\mathcal{M}_\infty$ be a minimizer of $\mathcal{F}_\infty$. Then
	\begin{equation*}
		\int_{(\Rin,R_0)\times\R}\frac{k^2}{r^3}\di\hat\mu
		=
		\int_{(\Rin,R_0)\times\R}
		\frac{\dot u^\star(r)r}{4k^2}
		\Big(\frac{\di\hat\mu_{,r}}{\di\hat\mu}\Big)^2
		\di\hat\mu\,.
	\end{equation*}
\end{cor}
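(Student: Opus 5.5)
The corollary follows by integrating the fiberwise identity of Theorem~\ref{thm:minimizers} against the frequency marginal. Let $\hat\mu$ be a minimizer and set $\lambda:=(\pi_2)_\sharp\hat\mu$. By Theorem~\ref{thm:minimizers}, $\big(\lambda,(g_k)_{k\in\R}\big)$ is a disintegration of $\hat\mu$ in the sense of Definition~\ref{def:disintegration-in-k}, and the identity \eqref{equipart-section} holds for $\lambda$-a.e. $k$. I would apply \eqref{eq:disint-in-k} to the two integrands
\[
f_1(r,k):=\frac{k^2}{r^3},\qquad f_2(r,k):=\frac{\dot u^\star(r)r}{4k^2}\Big(\frac{\di\hat\mu_{,r}}{\di\hat\mu}(r,k)\Big)^2 ,
\]
which yields
\[
\int_{(\Rin,R_0)\times\R} f_i\di\hat\mu=\int_\R\int_{\Rin}^{R_0} f_i(r,k)g_k(r)\di r\di\lambda(k),\qquad i=1,2 .
\]
By \eqref{equipart-section}, the inner integrals coincide for $\lambda$-a.e. $k$, so the outer integrals coincide, which is the claim.

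The one point to check is that \eqref{eq:disint-in-k} may be applied, since it is stated for $f\in L^1(I\times\R;|\mu|)$. Both $f_1$ and $f_2$ are nonnegative: $\dot u^\star>0$ by Lemma~\ref{lem:1dim-sol}(iv), and $k\neq0$ $\hat\mu$-a.e. by \eqref{small-k}. Their $\hat\mu$-integrals sum to $\mathcal F_\infty(\hat\mu)/\pi$, which is finite because the infimum is finite; the recovery constructions of Section~\ref{sec:upb} produce competitors with finite energy. So $f_1,f_2\in L^1(\hat\mu)$. Even without this, one could apply the identity to the truncations $\min\{f_i,n\}$ and pass to the limit by monotone convergence.

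The only mild subtlety is that $f_2$ is defined only $\hat\mu$-a.e., because the Radon--Nikodym density is. This is harmless: a $\hat\mu$-null set $E$ satisfies $\int_{\Rin}^{R_0}\chi_E(r,k)\,g_k(r)\di r=0$ for $\lambda$-a.e. $k$ (take $f=\chi_E$ in \eqref{eq:disint-in-k}), so modifying $f_2$ on $E$ changes neither side of the fiberwise identity. I do not expect any genuine obstacle.
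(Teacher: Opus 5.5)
Your argument is correct and is the intended derivation (the paper omits it): integrate the fiberwise identity \eqref{equipart-section} against $(\pi_2)_\sharp\hat\mu$ using \eqref{eq:disint-in-k}, and your treatment of nonnegativity and of the $\hat\mu$-a.e.\ defined density is sound. One correction to a side remark: Section~\ref{sec:upb} does not produce finite-energy elements of $\mathcal M_\infty$, since it starts from a given $\mu$ with $\mathcal F_\infty(\mu)<\infty$; finiteness of $\inf\mathcal F_\infty$ comes instead from the upper bound in \eqref{energy-scaling} combined with Propositions~\ref{prop:compactness} and~\ref{prop:lower_bound}, although your truncation and monotone-convergence argument makes this unnecessary anyway.
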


\section*{Acknowledgements}

R.~Marziani has received funding from the European Union’s Horizon research and innovation programme under the Marie Skłodowska-Curie Grant Agreement No.~101150549. 

The research leading to this paper was partially supported by the Gruppo Nazionale per l’Analisi Matematica, la Probabilità e le loro Applicazioni (GNAMPA) of the Istituto Nazionale di Alta Matematica (INdAM), through funding for a visiting period at the Courant Institute in 2025, and through the INdAM--GNAMPA projects 2025 CUP E53C24001950001, and 2026 CUP E53C25002010001.

 Views
and opinions expressed are however those of the authors only and do not necessarily reflect those
of the European Union or the European Research Executive Agency. Neither the European Union
nor the granting authority can be held responsible for them.

%

\end{document}